%% file: main_arxiv.tex
\documentclass[11pt,a4paper,reqno]{amsart}

\input{Packages_arxiv}

\input{Macros_arxiv}

\usepackage{etex}
\usepackage[a4paper, top=88pt, bottom=88pt, left=72pt, right=72pt, headsep=16pt, footskip=28pt]{geometry}

\usepackage{hyperref}
\hypersetup{
	colorlinks,
	linkcolor=blue,
	citecolor=blue,
	urlcolor=blue,
}

\newcommand*{\arXiv}[1]{\bgroup\color{blue}\href{https://arxiv.org/abs/#1}{arXiv:#1}\egroup}

\ExplSyntaxOn
\NewDocumentCommand{\doi}{m}
 {
  \group_begin:
  \tl_set:Nn \l_tmpa_tl {#1}

  \tl_replace_all:Nnn \l_tmpa_tl {\_} {\c_underscore_str}
  \regex_replace_once:nnN { \s+ \z } {} \l_tmpa_tl
  \regex_replace_once:nnN { [\.,] \z } {} \l_tmpa_tl

  \tl_replace_all:Nnn \l_tmpa_tl {<} {\c_percent_str 3C}
  \tl_replace_all:Nnn \l_tmpa_tl {>} {\c_percent_str 3E}
  \tl_replace_all:Nnn \l_tmpa_tl {(} {\c_percent_str 28}
  \tl_replace_all:Nnn \l_tmpa_tl {)} {\c_percent_str 29}
  \tl_replace_all:Nnn \l_tmpa_tl {:} {\c_percent_str 3A}
  \tl_replace_all:Nnn \l_tmpa_tl {;} {\c_percent_str 3B}

  \href{https\c_colon_str//doi.org/\tl_use:N \l_tmpa_tl}
       {\textcolor{blue}{doi:\texttt{\detokenize{#1}}}}
  \group_end:
 }
\ExplSyntaxOff

\allowdisplaybreaks

\usepackage{enumitem, moreenum}
\setlist[enumerate]{nosep}
\setlist[itemize]{nosep}
\usepackage{mleftright} \mleftright
\renewcommand{\qedsymbol}{$\blacksquare$}
\renewenvironment{proof}[1][\proofname]{\noindent{\bfseries\sffamily #1.} }{\hfill\qedsymbol\medskip}

\usepackage{caption}

\usepackage{cleveref}

\usepackage[foot]{amsaddr}

\setlist{topsep=0.3ex, itemsep=0.3ex}

\title[Verifiable Regularity Criterion for Conditional Expectation Operators]{Verifiable Regularity Criterion for
Conditional Expectation Operators and Conditional Mean Embeddings with Applications to
Nonparametric Regression,
Bayesian Inverse Problems,
and Koopman Operators}

\author{Maximiliano Hertel$^{1}$}
\address{$^1$Optimization-based Control Group, Institute of Mathematics,
Technische Universit\"at Ilmenau, Germany
({\textsc{\{maximiliano.hertel, karl.worthmann\}@tu-ilmenau.de}}).}%
\author{Ilja Klebanov$^{2}$}
\address{$^{2}$Department of Mathematics and Computer Science, Freie Universit{\"a}t Berlin, 
Germany (\textsc{klebanov@zedat.fu-berlin.de})}
\author{Manuel Schaller$^{3}$}
\address{$^3$Faculty of Mathematics, Chemnitz University of Technology, Germany
(\textsc{manuel.schaller@math.tu-chemnitz.de}).}%
\author{Karl Worthmann$^{1}$}

\thanks{M.\ Hertel gratefully acknowledges funding by Carl-Zeiss-Stiftung, project KI-MSO-O: AI-supported analysis, modeling and synthesis (design) of organoids, P2024-02-019.
I.\ Klebanov was funded by the Deutsche Forschungsgemeinschaft (DFG, German Research Foundation) under Germany’s Excellence Strategy (EXC-2046/1, project 390685689) of the Berlin Mathematics Research Center MATH$+$.
K.\ Worthmann is grateful for the support of the German Research Foundation (DFG) within the research unit ALeSCo: Active Learning for Systems and Control, project number 535860958.
The authors furthermore thank Mattes Mollenhauer for the insightful and productive discussions.}

\begin{document}

\begin{abstract}
Conditional expectation operators (CEOs) and their associated conditional mean embeddings (CMEs) play a central role across applied mathematics and machine learning, appearing in nonparametric regression, Bayesian inverse problems, and Koopman operator theory. A fundamental question is when a CEO maps a function space on 
$\cY$ into a prescribed function space on 
$\cX$, particularly a reproducing kernel Hilbert space (RKHS). We show that such mapping properties are characterized by the regularity of the Radon–Nikodym density of the conditional law, and establish a simple, verifiable sufficient condition under which the CEO is bounded and Hilbert–Schmidt. 
For RKHSs norm-equivalent to Sobolev spaces, this condition reduces to Sobolev regularity of the conditional density. 
The result yields a direct route to validate 
CME representations and 
error bounds for 
Galerkin-type and CME-based estimators. We verify the regularity condition in three settings: nonparametric regression, Bayesian inverse problems, and Koopman operator theory for stochastic dynamical systems. We show in each case that classical regularity results on the underlying probabilistic model imply the required mapping properties. The resulting framework offers a unified perspective on conditional expectation operators across probability, operator theory, kernel methods, and stochastic dynamics.
\end{abstract}

\maketitle
\pagestyle{headings}

\smallskip
\noindent \textbf{Keywords.} 
Bayesian inverse problems, 
conditional expectation operators, 
conditional mean embeddings, 
covariance operators,
Hilbert--Schmidt operators,
kernel ridge regression, 
Radon--Nikodym derivatives,
RKHS, 
stochastic differential equations, 
stochastic Koopman operators.

\smallskip
\noindent \textbf{Mathematics subject classications.} 47B10, 
37A30, 
46E22, 
45P05, 
62G08, 
62F15 

\section{Introduction}

Conditional expectation is among the most fundamental operations in probability theory, and understanding its regularity as a function of the conditioning variable is a recurring theme across applied mathematics.
This regularity question can be phrased 
in terms of the \emph{conditional expectation operator} (CEO; \citealt{mollenhauer2023nonparametric})
\[
    K \colon \cF_{\cY} \to \cF_{\cX}, \qquad (Kg)(x) \coloneqq \bE[g(Y)\,|\, X=x],
\]
where \(X\) and \(Y\) are random variables taking values in spaces \(\cX\) and \(\cY\), and \(\cF_{\cX}\) and \(\cF_{\cY}\) are spaces of functions on \(\cX\) and \(\cY\), respectively.
Further, one may seek to represent $Kg$ in a reproducing kernel Hilbert space (RKHS) using a \emph{conditional mean embedding} (CME; \cite{song2009hilbert,Klebanov2020CME}).
Both, the CEO and its lifted counterpart, the CME in the RKHS feature space, specialize to well-studied objects across a range of settings:
\begin{itemize}
    \item \emph{Nonparametric regression}, where $Kg$ recovers the regression function and, more generally, encodes the conditional law of $Y$ given $X=x$ \citep{GyorfiKohlerKrzyzakWalk2002,Caponnetto2007,Tsybakov2009nonparametric};
    \item \emph{Bayesian inverse problems}, where $K$ computes posterior expectations given observed data \citep{Stuart2010IP};
    \item \emph{stochastic dynamical systems}, where $K_t g(x)=\bE[g(X_t) \, | \, X_0=x]$ defines the stochastic Koopman operator or Markov (transition) operator/semigroup \citep{Koopman1931,BrunBudiKaisKutz22,MeynTweedie2009}.
\end{itemize}
A fundamental question across all of these settings concerns the \emph{regularity} of the image \(Kg\). 
In other words, for which spaces \(\cF_{\cX}\) and \(\cF_{\cY}\), is the linear operator $K$ well-defined and, possibly, 
bounded, compact or Hilbert--Schmidt? Similarly, such mapping properties play a crucial role in the RKHS setting:
In operator-based CME theory, this representability condition is the key assumption under which the standard operator-based representation of conditional expectations and conditional mean embeddings is valid \citep{Grunewalder2012CMEregressors,Klebanov2020CME}.
However, this representability assumption is well known to be restrictive and difficult to verify, especially in continuous settings.
Closely related questions about the mapping properties of CEOs also arise in stochastic Koopman theory and in the analysis of approximation methods such as (kernel) extended dynamic mode decomposition (EDMD) \citep{WillRowl15, hertel2025koopmanstochasticdynamicserror}.

The starting point of this work is the observation that these regularity questions can be studied via the Radon--Nikodym density of the conditional law.
Assume that, for \(P_X\)-almost every \(x\in \cX\), the conditional distribution \(P_{Y\mid X=x}\) is absolutely continuous with respect to a reference measure \(\nu_{\cY}\) on $\cY$, and denote its density by
\[
\big( \eta(y) \big) (x) \coloneqq \frac{dP_{Y\mid X=x}}{d\nu_{\cY}}(y).
\]
Viewing $K$ as an integral operator with integral kernel $\eta$ yields the key observation underlying this work, which amounts to the following simple implication for any Hilbert space $\cH$ of functions on $\cX$:
\begin{equation*}
\eta \in L^2(\nu_{\cY};\cH)
\quad \Longrightarrow\quad
K\colon L^2(\nu_{\cY})\to \cH \text{ is Hilbert--Schmidt.}
\end{equation*}
This yields a conceptually transparent route to RKHS representability: instead of verifying directly that $\bE[g(Y) \, | \, X=\quark]\in \cH$ for all $g$, it suffices to establish regularity of the conditional density in the conditioning variable,
ensuring in particular the applicability of the CME operator-based representation.
Based on this observation, this paper makes four main contributions, summarized in \Cref{fig:paper_overview_intro_new}:
\begin{itemize}
    \item \textbf{Regularity criterion for Conditional Expectation Operators} (CEOs). 
    We develop an abstract regularity condition for CEOs based entirely on the regularity of the conditional density (\Cref{sec:cond_exp_koopman_operator}).    
    \item \textbf{Regularity criterion for Conditional Mean Embeddings} (CMEs).
    The operator-based CME representation is usually available only under an
    \emph{a priori} representability assumption on the CEO. We instead
    \emph{derive} this assumption from density regularity of the conditional law,
    turning it into a verifiable condition for the CME covariance-operator
    representation in the RKHS feature space (\Cref{sec:CME_and_abstract_results}).
    \item \textbf{Error bounds for data-driven approximations}.
    We establish error bounds for Galerkin-type (kernel EDMD) and CME-based estimators of the CEO
    (\Cref{sec:error_bound_implications}) with explicit learning rates depending, e.g., on fill distance or eigenvalue decay.
    \item \textbf{Verification of regularity criterion and consequences.}
    We verify the density regularity condition in three notable settings: nonparametric regression, Bayesian inverse problems, and stochastic differential equations (\Cref{sec:verification_density_regularity}).
\end{itemize}

\begin{figure}[!t]
\centering
\adjustbox{scale=0.9}{
\begin{tikzpicture}[
    >=Latex,
    appbox/.style={
        draw, rounded corners, align=center, inner sep=6pt,
        text width=4.6cm, minimum height=2.3cm
    },
    mainbox/.style={
    draw, rounded corners, align=center, inner sep=6pt,
    text width=15.2cm, fill=gray!16,
    line width=1.4pt
    },
    resbox/.style={
        draw, rounded corners, align=center, inner sep=6pt,
        text width=5.9cm, minimum height=2.6cm
    },
    numbox/.style={
        draw, rounded corners, align=center, inner sep=6pt,
        text width=7.22cm, minimum height=3.1cm
    },
    arr/.style={->, thick, double, double distance=1.4pt},
    lab/.style={font=\footnotesize, inner sep=2pt},
    lableft/.style={lab, anchor=east, align=right, xshift=-4pt},
    labright/.style={lab, anchor=west, align=left, xshift=4pt}
]

\node[appbox, anchor=north] (reg) at (-5.3,-0.8)
{
\Cref{sec:regression}:
\\[0.7mm]
\textbf{Nonparametric regression}
\\[1mm]
$Y = f(X) + \varepsilon$
};

\node[appbox, anchor=north] (bip) at (0,-0.8)
{
\Cref{sec:Bayesian_IP}:
\\[0.7mm]
\textbf{Bayesian\\ inverse problems}
\\[1mm]
$X = F(Y) + \varepsilon$
};

\node[appbox, anchor=north] (sde) at (5.3,-0.8)
{
\Cref{sec:koopman_sde_class}:
\\[0.7mm]
\textbf{SDEs}
\\[1mm]
$X = X_0,\ Y = X_t$,
\\
$\rd X_t = b(X_t)\,\rd t + \sigma(X_t)\,\rd W_t$
};

\node[mainbox, anchor=north] (ass) at (0,-4.7)
{
\textbf{Density regularity condition}
(\Cref{assump:general_Radon_Nikodym_setting}):
$\quad\eta(x,y) = \dfrac{\rd P_{Y\mid X=x}}{\rd \nu_{\cY}}(y)
\in L^2(\nu_{\cY};\cH)$
};

\node[resbox, anchor=north] (ceo) at (-4.6,-7.5)
{
\Cref{sec:cond_exp_koopman_operator}:
\\[0.7mm]
\textbf{Conditional\\ expectation operator}
\\[1mm]
$K \in \HS(L^2(\nu_{\cY}) , \cH)$
};

\node[resbox, anchor=north] (cme) at (4.6,-7.5)
{
\Cref{sec:CME_and_abstract_results}:
\\[0.7mm]
\textbf{Conditional mean embedding}
\\[1mm]
$\begin{aligned}
&K
=
C_{XX}^{\dagger} C_{XY} \in \HS( \cH_{k_\cY} , \cH_{k_\cX})
\\
&\mu_{Y\mid X=\quark}
=
(C_{XX}^{\dagger} C_{XY})^{*}\,\phi_{\cX}    
\end{aligned}$
};

\node[numbox, anchor=north] (galerkin) at (-3.94,-11.5)
{
\Cref{sec:error_bound_implications}:
\\[0.7mm]
\textbf{One- and two-sided Galerkin proj.}
\\[1mm]
$\begin{aligned}
\widehat{K}
&= \cP_{\bsX} K \phantom{\cP_{\bsY}}
= \Phi_{\bsX} G_{\bsX\bsX}^{-1} \Phi_{\bsX}^{\top} K
\\[0.5mm]
\widetilde{K}
&= \cP_{\bsX} K \cP_{\bsY}
= \Phi_{\bsX} G_{\bsX\bsX}^{-1} \bsK_{\bsX\bsY}
G_{\bsY\bsY}^{-1} \Phi_{\bsY}^{\top}
\end{aligned}$
};

\node[numbox, anchor=north] (cmeformula) at (3.94,-11.5)
{
\Cref{sec:error_bound_implications}:
\\[0.7mm]
\textbf{Empirical CME formula}
\\[1mm]
$\begin{aligned}
K &\approx \hat{K}_{\varepsilon} \phantom{\circ \phi_{\cX}\ }
= \Phi_{\bsX}\bigl(G_{\bsX\bsX}^{\varepsilon}\bigr)^{\! -1}\Phi_{\bsY}^{\top}
\\[0.5mm]
\! \mu_{Y\mid X=\quark} &\approx \hat{K}_{\varepsilon}^{\,*}  \circ \! \phi_{\cX}
= \Phi_{\bsY}\bigl(G_{\bsX\bsX}^{\varepsilon}\bigr)^{\! -1}\Phi_{\bsX}^{\top}  \circ \! \phi_{\cX}
\end{aligned}$
};


\draw[arr] (reg.south) --
    node[lableft]
    {under source \\ condition \\
     $f \in H^{\ell}(\cX)$}
    node[labright]{\Cref{thm:kRR_well_specified_homoskedastic}}
    (reg.south |- ass.north);

\draw[arr] (bip.south) --
    node[lableft]
    {under growth
     \\
     condition \eqref{eq:log_partition_growth_assumption}
     }
     node[labright]{\Cref{thm:unbounded_F_weighted_sobolev}}
    (bip.south |- ass.north);

\draw[arr] (sde.south) --
    node[lableft]
    {under ellipticity\\ condition\\
     (\Cref{ass:sde_unif_elliptic_regular})}
    node[labright]{\Cref{thm:sde_density_koopman_unified}} 
    (sde.south |- ass.north);

\draw[arr] (ceo.north |- ass.south) --
    node[labright]{\Cref{thm:CEO_bounds_various_spaces}}
    (ceo.north);

\draw[arr] (cme.north |- ass.south) --
    node[labright]{\Cref{cor:regularity_conditional_expectation_operator_CME}}
    node[lableft]{if $\cH_{k_{\cY}} \hookrightarrow L^2(\nu_{\cY})$ and\\
    under \Cref{ass:kernel_rv_pair}}
    (cme.north);

\draw[arr] (ceo.east) --
    node[lab, anchor=south, yshift=3pt, align=center]
    {\Cref{thm:cme_representation}}
    node[lab, anchor=north, yshift=-3pt, align=center]
    {if $\cH_{k_{\cY}} \hookrightarrow L^2(\nu_{\cY})$
    \\
    and under
    \\
    \Cref{ass:kernel_rv_pair}
    }
    (cme.west);

\draw[arr] (ceo.south) --
    node[lableft]{error bound for\\ numerical approx.}
    node[labright]{\Cref{thm:err_bounds_proj_unified}}
    (ceo.south |- galerkin.north);

\draw[arr] (cme.south) --
    node[lableft]{error bounds for\\ numerical approx.}
    node[labright]{\Cref{prop:cme_learning_rate}}    
    (cme.south |- cmeformula.north);

\coordinate (bracex) at (7.9,0);

\draw[decorate, decoration={brace, amplitude=8pt, raise=2pt}]
    (bracex |- sde.north) -- (bracex |- ass.north)
    node[midway, right=25pt, rotate=90, anchor=center,
         font=\normalsize, align=center]
    {\textbf{VERIFICATION OF}
    \\
    $\eta \in L^2(\nu_{\cY};\cH)$
     (\Cref{sec:verification_density_regularity})};

\draw[decorate, decoration={brace, amplitude=8pt, raise=2pt}]
    (bracex |- ass.south) -- (bracex |- cmeformula.south)
    node[midway, right=25pt, rotate=90, anchor=center,
         font=\normalsize, align=center]
    {\textbf{IMPLICATIONS OF}
    \\
    $\eta \in L^2(\nu_{\cY};\cH)$
     (\Cref{sec:consequences_density_regularity})};
\end{tikzpicture}
}
\caption{ \small
Overview of the paper. The central density regularity condition
$\eta\in L^2(\nu_{\cY};\cH)$ (\Cref{assump:general_Radon_Nikodym_setting})
is verified in three application settings under suitable conditions (top,
\Cref{sec:verification_density_regularity}). It implies regularity of the
CEO (\Cref{thm:CEO_bounds_various_spaces}) and
validity of the conditional mean embedding representation
(\Cref{cor:regularity_conditional_expectation_operator_CME}), which in turn
yield error bounds for Galerkin-type and CME-based numerical approximations
(\Cref{sec:error_bound_implications}).
}
\label{fig:paper_overview_intro_new}
\end{figure}

\medskip

\noindent
\textbf{Outline.}
\Cref{sec:setup} introduces notation and background on RKHSs.
\Cref{sec:consequences_density_regularity} develops the operator-theoretic consequences of the density regularity condition: mapping properties of CEOs (\Cref{sec:cond_exp_koopman_operator}), verifiable conditions for conditional mean embeddings (\Cref{sec:CME_and_abstract_results}), and the resulting approximation error bounds (\Cref{sec:error_bound_implications}).
\Cref{sec:verification_density_regularity} verifies the density regularity condition in nonparametric regression (\Cref{sec:regression}), Bayesian inverse problems (\Cref{sec:Bayesian_IP}), and stochastic differential equations (\Cref{sec:koopman_sde_class}).
\Cref{sec:conclusions} concludes the work.

\section{Preliminaries and Notation}
\label{sec:setup}
Throughout this paper, $\cX,\cY$ denote standard Borel spaces and $(\Omega,\Sigma,\bP)$ refers to some underlying probability space.
We will work with two random variables $X \colon \Omega \to \cX$, $Y \colon \Omega \to \cY$ and denote the distributions of $X,Y$ and $(X,Y)$ by $P_X$, $P_Y$ and $P_{XY}$ respectively.
By standard results 
\citep[Thm.~8.5]{Kallenberg2021FoundationsModernProbability}, there exists a regular conditional distribution $P_{Y|X}$, unique up to $P_X$-almost everywhere equality.
Equivalently, $P_{Y|X}$ is a Markov kernel from $\cX$ to $\cY$, assigning to each $x \in \cX$ a probability measure $P_{Y|X=x}$ on $\cY$.
Throughout this paper, we fix one such version of the regular conditional distribution. Accordingly, for every measurable function $g: \cY \to \bR$ for which the expectation exists, we write 
\begin{equation}
\label{equ:generic_CEO}
K \colon \cF_{\cY} \to \cF_{\cX},
\qquad
(Kg)(x) \coloneqq
\bE[g(Y)  \, | \, X = x] \coloneqq \int_\cY g(y) \, P_{Y|X=x}(\rd y).
\end{equation}
Here, $\cF_{\cX}$ and $\cF_{\cY}$ denote spaces of functions on $\cX$ and $\cY$, respectively, and will be specified in each result separately, whenever relevant.
Technically speaking, $Kg$ denotes a measurable function which represents the corresponding conditional expectation, that is, $(Kg)(X) = \bE[g(Y) \, | \, \sigma(X)]$ almost surely, where $\sigma(X)$ denotes the $\sigma$-algebra generated by $X$; such a representative exists as a function of $X$ due to the Doob--Dynkin lemma \citep[Lemma~1.14]{Kallenberg2021FoundationsModernProbability}.
Consequently, identities involving conditional expectations are understood up to $P_X$-almost everywhere equality, while the chosen representative allows us to regard them as pointwise-defined functions of $x$.

We write $\bN \coloneqq \{1,2,\dots\}$ and $\bN_{0} \coloneqq \bN \cup \{0\}$, and for a multi-index $\alpha = (\alpha_{1},\dots,\alpha_{d}) \in \bN_{0}^{d}$ we write $|\alpha| \coloneqq \alpha_{1}+\cdots+\alpha_{d}$ and $\partial^{\alpha} = \partial_{x_{1}}^{\alpha_{1}} \cdots \partial_{x_{d}}^{\alpha_{d}}$ (also denoted $D^{\alpha}$); we write $\partial_{x}^{\alpha}$ when differentiating with respect to a variable $x$ to distinguish it from other arguments. By $\norm{\quark}_{2}$ we denote the Euclidean norm on $\bR^{d}$; norms and inner products on other (Hilbert) spaces carry an explicit subscript indicating the space. For an open set $D \subseteq \bR^{d}$, we denote by $\bslambda$ the Lebesgue measure on $D$.

For $\ell \ge 0$, we denote by $H^{\ell}( \bR^d)$ the $L^{2}$-based Sobolev space of order $\ell$, equipped with the norm 
\[
\norm{f}_{H^{\ell}(\bR^{d})}^{2}
\coloneqq
\int_{\bR^{d}} \big(1+\norm{\omega}_{2}^{2}\big)^{\ell} \, \absval{\widehat f(\omega)}^{2} \, \rd\omega
\]
where $\widehat{f}$ is the Fourier transform of $f$. If $D \subsetneq\bR^d$ is bounded with Lipschitz boundary (in the following abbreviated by Lipschitz bounded) we define the corresponding space by restriction, that is,
\[
\norm{f}_{H^{\ell}(D)}
\coloneqq
\inf\big\{ \norm{F}_{H^{\ell}(\bR^{d})} \mid F \in H^{\ell}(\bR^{d}),\ F|_{D} = f \big\}.
\]
For integer $\ell \in \bN_{0}$ these definitions agree with the usual weak-derivative Sobolev space $W^{\ell,2}(\bR^d)$, with norm $\norm{f}_{W^{\ell,2}(\bR^d)}^{2} = \sum_{|\alpha| \le \ell} \norm{\partial^{\alpha} f}_{L^{2}(\bR^d)}^{2}$ and analogously for Lipschitz bounded $D\subsetneq \bR^{d}$.

For normed spaces $E,F$, we denote by $\cL(E,F)$ the space of bounded linear operators $A \colon E \to F$, equipped with the operator norm
\[
\norm{A}_{\cL(E,F)}
\coloneqq
\sup_{\norm{u}_{E} \le 1} \norm{Au}_{F}.
\]
If $E$ and $F$ coincide as vector spaces their norms are equivalent, we write $E \simeq F$.

By $C_{b}(\cX)$ we denote the space of continuous and bounded real-valued functions on $\cX$, equipped with the supremum norm $\norm{f}_{\infty} \coloneqq \sup_{x \in \cX} |f(x)|$. For an operator $A \colon \cG \to \cH$ between Hilbert spaces $\cG,\cH$, we denote by $A^{*}$ its Hilbert space adjoint and by $A^{\dagger}$ its Moore--Penrose pseudoinverse. If $A$ is compact with non-zero singular values $(\sigma_i)_{i \in \bN}$, we denote its Hilbert--Schmidt and trace-class norms by
\[
\norm{A}_{\HS}
\coloneqq
\norm{A}_{\HS(\cG,\cH)}
\coloneqq
\bigg( \sum_{i=1}^{\infty} \absval{\sigma_i}^{2} \bigg)^{\!\! 1/2}
\in [0,\infty],
\qquad
\norm{A}_{\mathsf{Tr}(\cG,\cH)}
\coloneqq
\sum_{i=1}^{\infty} \absval{\sigma_i}
\in [0,\infty],
\]
which are finite if and only if $A$ is Hilbert--Schmidt respectively trace-class; the associated spaces are denoted by $\HS(\cG,\cH)$ and $\Tr(\cG,\cH)$, respectively. By $\mathrm{Id}$ we denote the identity operator on a given space. Finally, for quantities $a,b$ depending on a parameter (e.g.\ $n \in \bN$), we write $a \lesssim b$ if there exists a constant $C>0$, independent of the parameter, such that $a \le Cb$, and $a \asymp b$ if $a \lesssim b$ and $b \lesssim a$.

\subsection{Reproducing Kernel Hilbert Spaces (RKHS)}

We briefly recall the basics and notation for reproducing kernel Hilbert spaces used throughout the paper; standard references are \citep{Aronszajn1950RKHS, Berlinet2004RKHS, Wendland2005Scattered, Steinwart2008SVM, Saitoh2016rkhs}.
Let $k_{\cX} \colon \cX \times \cX \to \bR$ be a symmetric and positive definite kernel on a set $\cX$.
We denote by $\cH_{k_{\cX}}$ the corresponding reproducing kernel Hilbert space and by
$\phi_{\cX}(x) \coloneqq k_{\cX}(x,\quark) \in \cH_{k_{\cX}}$
the canonical feature map.
The reproducing property then reads
\begin{equation}
\label{equ:basic_reproducing_property_RKHS}
h(x)
=
\innerprod{h}{\phi_{\cX}(x)}_{\cH_{k_{\cX}}}
\qquad
\text{for all } h \in \cH_{k_{\cX}} \text{ and } x \in \cX .
\end{equation}
When an RKHS is used together with a random variable, the following compatibility assumption ensures that RKHS elements can be regarded as square-integrable random variables and that the associated feature maps and covariance operators (cf.\ \Cref{sec:CME_and_abstract_results}) are well-defined \citep{Klebanov2020CME}.

\begin{assumption}[Kernel--random variable pair]\label{ass:kernel_rv_pair}
The pair $(X,k_{\cX})$, consisting of a random variable $X \colon \Omega \to \cX$ and a kernel $k_{\cX}$ on $\cX$, satisfies the following:
\begin{enumerate}[label=(\roman*)]
\item
\label{item:basic_assumptions_kernel}
$k_{\cX}$ is measurable, symmetric and positive definite, and $\cH_{k_{\cX}}$ is separable.

\item
\label{item:basic_assumptions_feature_map_L2}
The feature map is square-integrable along $X$, that is,
\[
\bE\big[ \norm{\phi_{\cX}(X)}_{\cH_{k_{\cX}}}^{2} \big]
=
\bE\big[ k_{\cX}(X,X) \big]
< \infty .
\]

\item
\label{item:basic_assumptions_ae_separation}
The canonical embedding into $L^{2}(P_X)$ is injective: for every $h \in \cH_{k_{\cX}}$,
\[
h = 0 \quad P_X\text{-a.e.}
\qquad
\Longrightarrow
\qquad
h = 0 \quad \text{in } \cH_{k_{\cX}} .
\]
\end{enumerate}
\end{assumption}

\medskip

\Cref{ass:kernel_rv_pair}\ref{item:basic_assumptions_feature_map_L2} implies that every $h \in \cH_{k_{\cX}}$ is square-integrable with respect to $P_X$ and that the canonical embedding
\[
\iota_{k_{\cX},X}
\colon
\cH_{k_{\cX}} \hookrightarrow L^{2}(P_X),
\qquad
h \mapsto h,
\]
is bounded. Indeed, by the reproducing property and the Cauchy--Schwarz inequality,
\[
\norm{h}_{L^{2}(P_X)}^{2}
=
\bE\big[ \absval{h(X)}^{2} \big]
\le
\norm{h}_{\cH_{k_{\cX}}}^{2}
\bE\big[ k_{\cX}(X,X) \big] .
\]
\Cref{ass:kernel_rv_pair}\ref{item:basic_assumptions_ae_separation} ensures that this embedding is injective, so that elements of $\cH_{k_{\cX}}$ can be identified uniquely with their $P_X$-almost everywhere equivalence classes in $L^{2}(P_X)$.
In particular, we may view $\cH_{k_{\cX}}$ as a subspace of $L^{2}(P_X)$ and write $f \in \cH_{k_{\cX}}$ for functions $f \in L^{2}(P_X)$ whenever there exists $h \in \cH_{k_{\cX}}$ such that $f=h$ $P_X$-almost everywhere.
The injectivity condition holds, for example, if $k_{\cX}$ is continuous, $\cX$ is a topological space, and the topological support of $P_X$ equals $\cX$.

We recall the restriction property of RKHSs.
For $\cD \subseteq \cX$, let $k_{\cX}|_{\cD \times \cD}$ denote the restriction of $k_{\cX}$ to $\cD \times \cD$.
Then the corresponding RKHS
\[
\cH_{k_{\cX}}(\cD)
\coloneqq
\cH_{k_{\cX}|_{\cD \times \cD}}
\]
can be identified with the restriction space \citep[Theorem~10.47]{Wendland2005Scattered}
\begin{equation}
\label{equ:Wendland_restriction_RKHS}
\cH_{k_{\cX}}(\cD)
=
\{ h|_{\cD} \mid h \in \cH_{k_{\cX}} \},
\qquad
\norm{g}_{\cH_{k_{\cX}}(\cD)}
=
\inf
\big\{
\norm{h}_{\cH_{k_{\cX}}}
\mid
h \in \cH_{k_{\cX}},\,
h|_{\cD}=g
\big\}.
\end{equation}
We will further make use of the following standard reweighting construction,
which modifies a given kernel by a Gaussian weight and thereby enlarges the
corresponding RKHS by functions of controlled growth at infinity.
\begin{lemma}
	\label{lemma:reweighted_kernel}
	Let $\cX = \bR^{d}$, $\Gamma \in \bR^{d\times d}$ symmetric and strictly positive definite, and $k_0$ be a symmetric and positive definite kernel on $\bR^d$.
	Then, for $\tau > 0$, the reweighted kernel
    \begin{equation}
    \label{equation:Gaussian_reweighted_kernel}
    k_{\tau}(x,x')
	\coloneqq
	m_\tau(x)^{-1} \, k_0(x,x') \, m_\tau(x')^{-1},
	\qquad
	m_\tau(x) \coloneqq
	\exp\big(-\tfrac{\tau}{2}\norm{x}_{\Gamma}^{2}\big),
    \end{equation}	
	is also a symmetric and positive definite kernel on $\bR^d$ with
	$\cH_{k_\cX}
	=
	\{ f \mid m_\tau f \in \cH_{k_0} \}$
	and
	$\norm{f}_{\cH_{k_{\cX}}}
	=
	\norm{ m_\tau f }_{\cH_{k_0}}$.
\end{lemma}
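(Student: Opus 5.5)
We prove the statement by showing that multiplication by the nonvanishing function $m_\tau^{-1}$ is an isometric isomorphism from $\cH_{k_0}$ onto a Hilbert space of functions whose reproducing kernel is exactly $k_\tau$, and then invoking uniqueness of the RKHS for a given kernel (Moore--Aronszajn). Throughout, $\cH_{k_\cX}$ in the statement is read as $\cH_{k_\tau}$.

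First, $k_\tau$ is symmetric because $k_0$ is. It is positive definite because, for points $x_1,\dots,x_n\in\bR^d$ and coefficients $c_1,\dots,c_n\in\bR$, we have
\[
\sum_{i,j} c_i c_j k_\tau(x_i,x_j)=\sum_{i,j} \tilde c_i \tilde c_j k_0(x_i,x_j)\ge 0,
\qquad \tilde c_i \coloneqq c_i\, m_\tau(x_i)^{-1}.
\]
Since $m_\tau(x)>0$, the $\tilde c_i$ are well defined, and $\tilde c = 0$ exactly when $c=0$. Hence strict positive definiteness also transfers, if that is the intended notion.

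Next, define
\[
\cH\coloneqq\{f\colon\bR^d\to\bR \mid m_\tau f\in\cH_{k_0}\},
\qquad
\langle f,g\rangle_\cH\coloneqq\langle m_\tau f,m_\tau g\rangle_{\cH_{k_0}}.
\]
Because $m_\tau$ never vanishes, the map $U\colon\cH_{k_0}\to\cH$, $h\mapsto m_\tau^{-1}h$, is a linear bijection. By construction it preserves inner products, so $\cH$ is a Hilbert space of functions; completeness and definiteness are inherited from $\cH_{k_0}$.

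It remains to check the reproducing property in $\cH$. For fixed $x$ we have
\[
k_\tau(x,\quark)=m_\tau(x)^{-1}\,m_\tau^{-1}k_0(x,\quark)=U\big(m_\tau(x)^{-1}k_0(x,\quark)\big)\in\cH.
\]
For $f\in\cH$ this gives
\[
\langle f,k_\tau(x,\quark)\rangle_\cH
=m_\tau(x)^{-1}\langle m_\tau f,k_0(x,\quark)\rangle_{\cH_{k_0}}
=m_\tau(x)^{-1}(m_\tau f)(x)=f(x).
\]
So $\cH$ is a Hilbert space of functions on $\bR^d$ with reproducing kernel $k_\tau$. By uniqueness of the RKHS associated with a positive definite kernel, $\cH=\cH_{k_\tau}$ with identical norms, which is the claimed characterization.

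There is no substantial obstacle. The only points needing care are that $m_\tau>0$ everywhere, which makes $U$ bijective, and the appeal to Moore--Aronszajn uniqueness. The specific Gaussian form of $m_\tau$, and the assumptions $\Gamma\succ0$ and $\tau>0$, play no role beyond guaranteeing that $m_\tau$ is positive.
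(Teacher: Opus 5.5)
Your proof is correct and follows the same route as the paper, which simply cites the standard rescaling property of RKHSs (Corollary~2.5 in Saitoh's monograph). You reproduce that standard proof in self-contained form: transport $\cH_{k_0}$ isometrically via multiplication by the nonvanishing function $m_\tau^{-1}$, check the reproducing property for $k_\tau$, and conclude by Moore--Aronszajn uniqueness, correctly reading $\cH_{k_\cX}$ as $\cH_{k_\tau}$.
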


\begin{proof}
	This is the standard rescaling property of RKHSs, see e.g.\ \citep[Corollary~2.5]{Saitoh2016rkhs}.
\end{proof}

\subsection{Sobolev Reproducing Kernel Hilbert Spaces}
\label{subsec:sob}
A particularly important class of kernels are those whose reproducing kernel Hilbert spaces are norm-equivalent to Sobolev spaces \citep{Wendland2005Scattered}. Two features make them central to this work.
First, Sobolev spaces provide the canonical scale of quantitative smoothness in analysis, approximation theory, and statistical learning. Second, and most importantly for our purposes, the abstract density regularity condition $\eta \in L^2(\nu_\cY;\cH)$ then becomes one of \emph{Sobolev regularity of the conditional density} in the conditioning variable---a reformulation that makes it verifiable through classical smoothness estimates for the conditional law, available in a wide range of models.
We verify this Sobolev regularity in \Cref{sec:verification_density_regularity}, for nonparametric regression, Bayesian inverse problems, and stochastic differential equations governed by uniformly elliptic diffusion.

\begin{proposition}[Mat\'ern and Wendland kernels yield Sobolev-type RKHSs]
\label{prop:matern_wendland_sobolev}
Let $\rho > 0$, $d\in\bN$ and either $\cX = \bR^d$ or $\cX \subseteq \bR^d$ be a bounded Lipschitz domain.
Further, let $K_\beta$, $\beta > 0$, denote the modified Bessel function of the second kind and $\phi_{d,j}$ denote the Wendland function of order $j\in\bN_0$ \citep[Definition~9.11]{Wendland2005Scattered}, with the additional assumption $d \geq 3$ if $j = 0$.
Then the RKHSs corresponding to the Mat\'ern and Wendland kernels
\begin{align}
\label{equ:def_Matern_kernel}
k_{\beta}^{\mathrm{Mat}}(x,x')
&=
\frac{2^{1-\beta}}{\Gamma(\beta)}
\left(\sqrt{2\beta}\,\|x-x'\|_{2}\right)^\beta
K_\beta \left(\sqrt{2\beta}\,\|x-x'\|_{2}\right),
\\
\label{equ:def_Wendland_kernel}
k_{d,j}^{\mathrm{Wen}}(x,x')
&=
\phi_{d,j} \big(\rho^{-1} \|x-x'\|_2\big),
\end{align}
satisfy
\[
\cH_{k_{\beta}^{\mathrm{Mat}}}(\cX)
\simeq
H^{\beta+\frac d2}(\cX),
\qquad
\cH_{k_{d,j}^{\mathrm{Wen}}}(\cX)
\simeq
H^{j+\frac{d+1}{2}}(\cX).
\]
Moreover, the Mat\'ern kernel $k_{\beta}^\mathrm{Mat}$ is $\lceil 2 \beta - 1 \rceil$-times continuously differentiable on $\bR^d$. 
\end{proposition}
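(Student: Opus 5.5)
The plan is to work on $\bR^d$ through the Fourier transform and then pass to bounded Lipschitz domains by restriction. Both kernels are translation invariant, $k(x,x')=\Phi(x-x')$, with $\Phi$ continuous, integrable and positive definite. For such kernels the standard characterization \citep[Theorem~10.12]{Wendland2005Scattered} gives
\[
\cH_k(\bR^d)=\Big\{f\in L^2(\bR^d)\cap C(\bR^d) \;\Big|\; \widehat f/\sqrt{\widehat\Phi}\in L^2(\bR^d)\Big\},
\qquad
\norm{f}_{\cH_k}^2=(2\pi)^{-d/2}\int_{\bR^d}\frac{\absval{\widehat f(\omega)}^2}{\widehat\Phi(\omega)}\,\rd\omega .
\]
Hence it suffices to show the two-sided bound $\widehat\Phi(\omega)\asymp(1+\norm{\omega}_2^2)^{-s}$, where $s=\beta+\frac d2$ in the Matérn case and $s=j+\frac{d+1}2$ in the Wendland case. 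Comparing with the Sobolev norm defined in \Cref{sec:setup} then yields $\cH_k(\bR^d)\simeq H^s(\bR^d)$.

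\textbf{Fourier decay of each kernel.}
\begin{itemize}
\item \emph{Matérn.} The transform is explicit: $\widehat\Phi(\omega)=c_{\beta,d}\,(2\beta+\norm{\omega}_2^2)^{-(\beta+d/2)}$. This follows from the Bessel integral representation, or equivalently from the subordination identity $(a+r^2)^{-s}=\Gamma(s)^{-1}\int_0^\infty t^{s-1}e^{-t(a+r^2)}\,\rd t$ combined with Gaussian transforms. Since $2\beta+r^2\asymp 1+r^2$, the required two-sided bound is immediate.
\item \emph{Wendland.} Here I will simply invoke Wendland's theorem \citep[Theorem~10.35]{Wendland2005Scattered}: $\widehat{\phi_{d,j}(\norm{\quark}_2)}(\omega)\asymp(1+\norm{\omega}_2)^{-d-2j-1}$. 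This theorem requires $d\ge3$ when $j=0$, which is exactly the extra hypothesis in the statement. The scaling by $\rho$ only changes constants.
\end{itemize}

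\textbf{Bounded Lipschitz domains.} Let $\cX\subseteq\bR^d$ be a bounded Lipschitz domain. By the restriction property \eqref{equ:Wendland_restriction_RKHS}, $\cH_k(\cX)$ carries the infimum norm over extensions in $\cH_k(\bR^d)$. The space $H^s(\cX)$ is defined in \Cref{sec:setup} by exactly the same infimum over extensions in $H^s(\bR^d)$. The norm equivalence on $\bR^d$ therefore transfers directly to an equivalence of the two infimum norms on $\cX$. No extension operator is needed, because both spaces are defined via restriction.

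\textbf{Differentiability of the Matérn kernel.} The derivative $\partial^\alpha\Phi$ is the inverse Fourier transform of $(i\omega)^\alpha\widehat\Phi$. This is continuous whenever $\int\norm{\omega}_2^{|\alpha|}(1+\norm{\omega}_2^2)^{-\beta-d/2}\,\rd\omega<\infty$, which holds when $|\alpha|<2\beta$. Consequently $\Phi\in C^m$ for every integer $m<2\beta$, i.e.\ for $m\le\lceil2\beta-1\rceil$. Finally, $k(x,x')=\Phi(x-x')$, so the same smoothness holds jointly in $(x,x')$.

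\textbf{Main obstacle.} The genuinely hard analytic input is the two-sided decay of the Wendland Fourier transform. I will cite it rather than reprove it. Everything else is the standard Fourier characterization of translation-invariant RKHSs together with bookkeeping of constants and normalizations, such as $\sqrt{2\beta}$ and $\rho$.
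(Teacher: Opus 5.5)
Your argument is correct and has the same skeleton as the paper's proof: characterize the native space on $\bR^d$, then pass to $\cX$ by restriction. The difference is that the paper proceeds purely by citation, whereas you supply self-contained derivations for two of the ingredients.

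\textbf{What the paper cites.} It quotes Porcu et al.\ for $\cH_{k_\beta^{\mathrm{Mat}}}(\bR^d)\simeq H^{\beta+d/2}(\bR^d)$. It quotes Da Costa et al.\ for the differentiability claim. It uses Wendland's Theorem~10.35 for the Wendland case and Theorem~10.47/Corollary~10.48 for the domain case.

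\textbf{What you do instead.} You obtain the Mat\'ern case from the explicit spectral density $\propto(2\beta+\norm{\omega}_2^2)^{-\beta-d/2}$, combined with the Fourier characterization of translation-invariant native spaces. You get $C^{\lceil 2\beta-1\rceil}$-regularity from integrability of $\norm{\omega}_2^{|\alpha|}\widehat\Phi(\omega)$ for $|\alpha|<2\beta$. You correctly observe that $\lceil 2\beta-1\rceil$ is the largest integer strictly below $2\beta$.

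\textbf{What each route buys.} Your Fourier argument gives exactly the sufficiency the statement needs. The cited result additionally yields sharpness (no further derivatives), which is not required here. Your domain step is also slightly more direct than Corollary~10.48. Because the paper defines $H^s(\cX)$ as a restriction space with the infimum norm, the equivalence on $\bR^d$ transfers verbatim. The Lipschitz hypothesis is only needed elsewhere, to identify this restriction space with the intrinsic weak-derivative Sobolev space.

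\textbf{One point to state explicitly.} The Fourier characterization describes the native space as a subspace of $C(\bR^d)\cap L^2(\bR^d)$. To conclude equality with $H^s(\bR^d)$, you should note that $s>d/2$ in both cases. Hence $\widehat f\in L^1(\bR^d)$, and every element of $H^s(\bR^d)$ has a continuous representative.
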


\begin{proof}
    First, let $\cX = \bR^d$. The characterization of the RKHS generated by a Mat\'ern kernel $k_\beta^{\mathrm{Mat}}$, $\beta >0$, can be found, for example, in \citet{PorcBeviSchabOate2024}. Moreover, \citet[][Appendix B.1]{DaCosta2026} show that the Matérn kernel is exactly $\lceil 2 \beta - 1 \rceil$-times differentiable for $\beta \notin \bN$ and $(2 \beta - 1)$-times differentiable for $\beta \in \bN$, such that in any case $k_\beta^\mathrm{Mat} \in C^{\lceil 2\beta - 1 \rceil}(\bR^d \times \bR^d)$. 
    
    For $j \in \bN_0$, with the additional assumption $d \geq 3$ if $j = 0$, \citet[Theorem~10.35]{Wendland2005Scattered} yields the assertion on Wendland kernels $k_{d,j}^\mathrm{Wen}$.

    For a bounded Lipschitz domain $\cX \subsetneq \bR^d$, both characterizations follow from the restriction result by \citet[Theorem~10.47 and Corollary~10.48]{Wendland2005Scattered}.
\end{proof}

\begin{proposition}[A sufficient criterion for $\cH_k(\cX) \hookrightarrow L^2(\cX)$]
\label{prop:rkhs_in_L2_via_spectral_density}
Let $k(x,x')=\psi(x-x')$ be a translation-invariant kernel on $\bR^d$, where
$\psi \in L^{1}(\bR^{d})$ is continuous and positive definite such that $\widehat\psi\in L^\infty(\bR^d)$.
Then the corresponding RKHS $\cH_k$ is continuously embedded into $L^2(\bR^d)$.
Moreover, for every Lebesgue-measurable set $\cX\subseteq\bR^d$, the corresponding (restricted) RKHS $\cH_k(\cX)$ is continuously embedded into $L^2(\cX)$.
More precisely, for all $f\in\cH_k(\cX)$,
\[
\|f\|_{L^2(\cX)}^2
\le
(2\pi)^{d/2}\|\widehat\psi\|_{L^\infty(\bR^d)}\,\|f\|_{\cH_k(\cX)}^2 .
\]
\end{proposition}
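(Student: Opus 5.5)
We first prove the embedding $\cH_k\hookrightarrow L^2(\bR^d)$ through the Fourier characterisation of translation-invariant RKHSs, and then pass to $\cX$ through the restriction property \eqref{equ:Wendland_restriction_RKHS}.

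\textbf{Step 1 (whole space).} Since $\psi\in L^1(\bR^d)$ is continuous and positive definite, Bochner's theorem gives $\widehat\psi\ge 0$, and the standard characterisation \citep[Theorem~10.12]{Wendland2005Scattered} reads
\[
\cH_k=\Big\{f\in L^2(\bR^d)\cap C(\bR^d)\;\Big|\;\widehat f/\sqrt{\widehat\psi}\in L^2(\bR^d)\Big\},
\qquad
\|f\|_{\cH_k}^2=(2\pi)^{-d/2}\int_{\bR^d}\frac{|\widehat f(\omega)|^2}{\widehat\psi(\omega)}\,\rd\omega .
\]
This uses Wendland's normalisation $\widehat f(\omega)=(2\pi)^{-d/2}\int f(x)e^{-\mathrm{i}x^\top\omega}\,\rd x$. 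With that convention, Plancherel gives $\|f\|_{L^2}=\|\widehat f\|_{L^2}$. We then estimate pointwise:
\[
|\widehat f(\omega)|^2=\widehat\psi(\omega)\,\frac{|\widehat f(\omega)|^2}{\widehat\psi(\omega)}\le\|\widehat\psi\|_{L^\infty}\frac{|\widehat f(\omega)|^2}{\widehat\psi(\omega)} .
\]
Integrating this bound yields
\[
\|f\|_{L^2(\bR^d)}^2\le(2\pi)^{d/2}\|\widehat\psi\|_{L^\infty}\|f\|_{\cH_k}^2 .
\]
Frequencies where $\widehat\psi=0$ cause no difficulty, because there $\widehat f=0$ almost everywhere by finiteness of the norm.

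\textbf{Step 2 (restriction).} Let $g\in\cH_k(\cX)$. By \eqref{equ:Wendland_restriction_RKHS}, Theorem~10.47 in \citet{Wendland2005Scattered} holds for arbitrary subsets, so for every $\varepsilon>0$ there is $h\in\cH_k$ with $h|_\cX=g$ and $\|h\|_{\cH_k}^2\le\|g\|_{\cH_k(\cX)}^2+\varepsilon$. In fact the infimum is attained, by the orthogonal projection onto the closed span of $\{\phi(x)\}_{x\in\cX}$. Since $\cX$ is Lebesgue measurable, Step 1 gives
\[
\|g\|_{L^2(\cX)}^2\le\|h\|_{L^2(\bR^d)}^2\le(2\pi)^{d/2}\|\widehat\psi\|_{L^\infty}\big(\|g\|_{\cH_k(\cX)}^2+\varepsilon\big).
\]
Letting $\varepsilon\to0$ gives the claim. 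Linearity of the embedding map is immediate, and injectivity is not required.

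\textbf{Main obstacle.} The delicate point is Step 1, specifically the Fourier-side characterisation. Its standard statement assumes $\psi\in C(\bR^d)\cap L^1(\bR^d)$, which is exactly our hypothesis, and it uses Wendland's Fourier normalisation, which is precisely what produces the constant $(2\pi)^{d/2}$ in the statement. I would therefore fix that convention explicitly at the start of the proof, so that the constant matches.
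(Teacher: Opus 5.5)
Your proposal is correct and follows the paper's route. As in the paper, Bochner's theorem and the Fourier characterisation of $\cH_k$ from \citet[Theorem~10.12]{Wendland2005Scattered}, combined with Plancherel, give the whole-space bound with constant $(2\pi)^{d/2}\|\widehat\psi\|_{L^\infty(\bR^d)}$, and the restriction-norm formula \eqref{equ:Wendland_restriction_RKHS} with an $\varepsilon$-approximate extension transfers it to $\cH_k(\cX)$. Your extra remarks are correct refinements the paper omits: the infimum is attained by the orthogonal projection onto the closed span of the kernel sections at points of $\cX$, so no $\varepsilon$ is needed, and $\widehat f$ vanishes a.e.\ where $\widehat\psi=0$.
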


\begin{proof}
The proof is provided in \Cref{sec:technical_details_proofs}.
\end{proof}

By Bochner's theorem \citep[Theorem~6.6]{Wendland2005Scattered}, $\widehat{\psi}$ is the Lebesgue density of the spectral measure associated with the kernel whenever it exists, and the assumption $\widehat{\psi} \in L^\infty(\bR^d)$ follows from $\psi \in L^1(\bR^d)$ by the Riemann--Lebesgue lemma. 

\begin{remark}
\label{rem:rkhs_in_L2_via_spectral_density_examples}
\Cref{prop:rkhs_in_L2_via_spectral_density} 
applies to many well-established kernels such as Mat\'ern and Wendland kernels,
cf.\ \Cref{prop:matern_wendland_sobolev}, Gaussian and Laplace kernels, as well as
Cauchy kernels with $\beta > d/2$, whose profiles are given by
\[
\psi^{\mathrm{Gauss}}(x)
=
\exp\bigg(-\frac{\|x\|_2^2}{2\sigma^2}\bigg),
\quad\ 
\psi^{\mathrm{Lap}}(x)
=
\exp\bigg(-\frac{\|x\|_2}{\sigma}\bigg),
\quad\ 
\psi^{\mathrm{Cauchy}}(x)
=
\bigg(1+\frac{\|x\|_2^2}{\sigma^2}\bigg)^{\!\! -\beta}.
\]
\end{remark}

\section{\texorpdfstring{Implications of the Density Regularity Condition $\eta \in L^2(\nu_{\cY};\cH)$}{Implications of the Density Regularity Condition}}
\label{sec:consequences_density_regularity}

In this section, we develop the consequences of the density
regularity condition introduced in
\Cref{assump:general_Radon_Nikodym_setting}. The key observation is that the
CEO \eqref{equ:generic_CEO} admits an integral representation whose
integral kernel is precisely the Radon--Nikodym density of the conditional
law, so that regularity of the density translates directly into mapping properties of
$K$. \Cref{sec:cond_exp_koopman_operator} establishes these mapping
properties, \Cref{sec:CME_and_abstract_results} derives from them the
covariance-operator representation of conditional mean embeddings, and
\Cref{sec:error_bound_implications} provides error bounds for the resulting
numerical approximations.

\subsection{Verifiable Regularity Criterion 
for Conditional Expectation 
Operators (CEOs)}
\label{sec:cond_exp_koopman_operator}

For random variables \(X\) and \(Y\) taking values in spaces \(\cX\) and \(\cY\), we study \emph{conditional expectation operators} (CEOs) \(K \colon \cF_{\cY} \to \cF_{\cX}\) of the form \eqref{equ:generic_CEO}.
In the context of a stochastic dynamical system $(X_t)_{t\geq 0}$, taking $X=X_0$ and $Y=X_t$ recovers the associated Markov operator, commonly referred to as the (stochastic) \emph{Koopman operator}.
Our first aim is to clarify the mapping properties of \(K\) that are
crucial in view of data-driven and finite-rank approximations: for which function spaces $\cF_{\cX}$, $\cF_{\cY}$ is \(K\) a well-defined bounded linear operator, and under which assumptions is it Hilbert--Schmidt, hence compact?
Compactness is the classical prerequisite for the convergence of Galerkin-type approximation schemes for integral operators \citep[Chapter~13]{Kress2014IntegralEquations}, and underlies the growing body of work on data-driven Koopman operator approximation via (kernel) extended dynamic mode decomposition (EDMD) \citep{WillRowl15,KlusKoltaiSchutte2016:num_approx_koopman,KohnPhil24,hertel2025koopmanstochasticdynamicserror}.
For this purpose, we rewrite $K$ as a kernel integral operator
\begin{equation}
\label{equ:rewrite_conditional_expectation_using_Radon_Nikodym}
K g
=
T_{\eta} g
\coloneqq
\int_{\cY} g(y)\, \eta(\quark,y)\, \nu_{\cY}(\rd y),
\quad
\text{where}
\quad
\eta(x,y)\coloneqq \frac{\rd P_{Y\mid X=x}}{\rd \nu_{\cY}}(y)
\end{equation}
denotes the Radon--Nikodym derivative of the conditional distribution $P_{Y\mid X=x}$ with respect to an arbitrary measure $\nu_{\cY}$ on $\cY$ (under the assumption of absolute continuity $P_{Y\mid X=x} \ll \nu_{\cY}$ for $P_X$-almost every $x\in\cX$).
As usual when working with Bochner spaces, we will often view $\eta$ as a map $\eta \colon \cY \to \{ \text{functions over } \cX \}$ and write 
\[
\eta(y) \coloneqq \eta(\quark,y) \colon \cX \to \bR.
\]
We begin with a general result for integral operators with arbitrary integral kernel $\eta$, and later discuss the consequences when $\eta$ is the Radon--Nikodym derivative introduced above.
While the arguments are fairly standard, we give a self-contained proof for the sake of completeness.

\begin{proposition}
\label{prop:CME_regularization_mu}
Let $(\cY,\Sigma_{\cY},\nu_{\cY})$ be a measure space, let $\cH$ be a
separable Hilbert space and let $\eta\in L^2(\nu_{\cY};\cH)$. Then the integral
operator
\[
T_{\eta} \colon L^2(\nu_{\cY})\to \cH,
\qquad
T_{\eta} g\coloneqq \int_{\cY} g(y)\, \eta(y)\, \nu_{\cY}(\rd y),
\]
is well-defined and Hilbert--Schmidt. In particular, it is bounded, compact,
and satisfies
\[
\norm{T_{\eta}}_{\cL(L^2(\nu_{\cY}),\cH)}
\leq
\norm{T_{\eta}}_{\HS}
=
\norm{\eta}_{L^2(\nu_{\cY};\cH)}.
\]
Moreover, its adjoint $T_{\eta}^* \colon \cH\to L^2(\nu_{\cY})$ is given by
$(T_{\eta}^* h)(y) = \innerprod{\eta(y)}{h}_{\cH}$ for all $h\in\cH$.
\end{proposition}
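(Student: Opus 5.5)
The plan is to build everything from Bochner integration plus an orthonormal basis expansion of $\cH$. For well-definedness, I fix $g \in L^2(\nu_{\cY})$ and note that $y \mapsto g(y)\eta(y)$ is strongly measurable, since $\eta$ is and $g$ is a scalar measurable function. By Cauchy--Schwarz,
\[
\int_{\cY} \absval{g(y)}\,\norm{\eta(y)}_{\cH}\,\nu_{\cY}(\rd y) \le \norm{g}_{L^2(\nu_{\cY})}\norm{\eta}_{L^2(\nu_{\cY};\cH)} < \infty .
\]
So $g\eta$ is Bochner integrable and $T_\eta g \in \cH$ is well defined. The same estimate shows that $T_\eta$ is linear and bounded with $\norm{T_\eta}_{\cL} \le \norm{\eta}_{L^2(\nu_{\cY};\cH)}$. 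It also shows that the map does not depend on the representative of $g$ or of $\eta$.

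For the adjoint, I will use that bounded linear functionals commute with Bochner integrals. For $h \in \cH$ this gives
\[
\innerprod{T_\eta g}{h}_{\cH} = \int_{\cY} g(y)\innerprod{\eta(y)}{h}_{\cH}\,\nu_{\cY}(\rd y).
\]
The function $y\mapsto \innerprod{\eta(y)}{h}_{\cH}$ is measurable and lies in $L^2(\nu_{\cY})$, because it is bounded pointwise by $\norm{h}_{\cH}\norm{\eta(y)}_{\cH}$. Hence it is $T_\eta^* h$.

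For the Hilbert--Schmidt property, I take an orthonormal basis $(e_i)_{i\in I}$ of the separable space $\cH$, with $I$ countable. I will compute the HS norm through the adjoint, using $\norm{T_\eta}_{\HS}^2 = \norm{T_\eta^*}_{\HS}^2 = \sum_i \norm{T_\eta^* e_i}_{L^2}^2$. This route avoids having to know that $L^2(\nu_{\cY})$ is separable, which could fail for a general measure space. Tonelli and Parseval then give
\[
\sum_i \norm{T_\eta^* e_i}_{L^2(\nu_{\cY})}^2 = \int_{\cY}\sum_i \absval{\innerprod{\eta(y)}{e_i}_{\cH}}^2\,\nu_{\cY}(\rd y) = \int_{\cY}\norm{\eta(y)}_{\cH}^2\,\nu_{\cY}(\rd y),
\]
which is $\norm{\eta}_{L^2(\nu_{\cY};\cH)}^2$. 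Equality of the HS norms of $T_\eta$ and $T_\eta^*$ is standard. It can also be seen directly from singular values, which matches the paper's definition of the HS norm. Compactness follows because Hilbert--Schmidt operators are compact, and the operator-norm bound follows from $\norm{\cdot}_{\cL}\le\norm{\cdot}_{\HS}$ (the largest singular value is at most the $\ell^2$ norm of the singular values).

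I expect the main obstacle to be the measure-theoretic bookkeeping rather than any estimate. One point is the strong measurability of $g\eta$. The other is justifying the HS norm in a setting where $L^2(\nu_{\cY})$ may be non-separable or $\nu_{\cY}$ non-$\sigma$-finite. Passing to the adjoint and using only the countable basis of $\cH$ handles the second point. For Tonelli here, only the counting measure on $I$ needs to be $\sigma$-finite, and the summands are nonnegative.
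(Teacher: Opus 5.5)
Your proposal is correct and follows essentially the same route as the paper. Both arguments show Bochner integrability of $g\eta$ via Cauchy--Schwarz, identify $T_\eta^*$ by pulling the inner product inside the Bochner integral, and compute $\norm{T_\eta}_{\HS}^2=\norm{T_\eta^*}_{\HS}^2$ using a countable orthonormal basis of $\cH$, Parseval and Tonelli; your remarks on strong measurability and on not needing separability of $L^2(\nu_{\cY})$ are sound details that the paper leaves implicit.
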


\begin{proof}
For $g\in L^2(\nu_{\cY})$, the map $y\mapsto g(y)\eta(y)$ is Bochner integrable,
since
\[
\int_{\cY}|g(y)|\,\norm{\eta(y)}_{\cH}\,\nu_{\cY}(\rd y)
\leq
\norm{g}_{L^2(\nu_{\cY})}\norm{\eta}_{L^2(\nu_{\cY};\cH)}.
\]
Hence $T_{\eta}$ is well-defined and linear.
Further, the operator
$U_\eta\colon \cH\to L^2(\nu_{\cY})$ given by
$(U_\eta h)(y) = \innerprod{\eta(y)}{h}_{\cH}$ satisfies
$U_\eta h \in L^2(\nu_{\cY})$, because
\[
\norm{U_\eta h}_{L^2(\nu_{\cY})}^2
=
\int_{\cY}
\left|\innerprod{\eta(y)}{h}_{\cH}\right|^2
\,\nu_{\cY}(\rd y)
\leq
\norm{h}_{\cH}^2\norm{\eta}_{L^2(\nu_{\cY};\cH)}^2.
\]
Moreover, $T_{\eta}^* = U_\eta$ since, for every $g\in L^2(\nu_{\cY})$ and $h\in\cH$, 
\[
\innerprod{T_{\eta} g}{h}_{\cH}
=
\Innerprod{\int_{\cY} g(y)\eta(y)\,\nu_{\cY}(\rd y)}{h}_{\cH}
=
\int_{\cY} g(y)\innerprod{\eta(y)}{h}_{\cH}\,\nu_{\cY}(\rd y)
=
\innerprod{g}{U_\eta h}_{L^2(\nu_{\cY})}.
\]
Let $(h_m)_{m\in\bN}$ be an orthonormal basis of $\cH$.
By invariance of the Hilbert--Schmidt norm under taking adjoints, Parseval's identity and Tonelli's theorem,
\begin{align*}
\norm{T_{\eta}}_{\HS}^2
&=
\norm{T_{\eta}^*}_{\HS}^2
=
\sum_{m=1}^\infty
\norm{U_\eta h_m}_{L^2(\nu_{\cY})}^2
=
\sum_{m=1}^\infty
\int_{\cY}
\left|
\innerprod{\eta(y)}{h_m}_{\cH}
\right|^2
\,\nu_{\cY}(\rd y)
=
\int_{\cY}
\norm{\eta(y)}_{\cH}^2
\,\nu_{\cY}(\rd y)
\\
&=
\norm{\eta}_{L^2(\nu_{\cY};\cH)}^2\, ,
\end{align*}
proving the claim.
\end{proof}

We now specialize \Cref{prop:CME_regularization_mu} to the CEO, taking $\eta$ to be the Radon--Nikodym derivative from \eqref{equ:rewrite_conditional_expectation_using_Radon_Nikodym}. The condition $\eta \in L^2(\nu_{\cY};\cH)$ is then a regularity condition on the transition density in the conditioning variable; for $\cH = H^\ell(\cX)$, it amounts to Sobolev regularity of the sections $x \mapsto \eta(x,y)$, an assumption satisfied by many Markov processes with sufficiently regular transition densities, such as uniformly elliptic diffusions (revisited in \Cref{sec:koopman_sde_class}).
The theorem below shows how this $\cH$-regularity transfers to classical target spaces by composition with embeddings: for sufficiently regular $\cX \subseteq \bR^{d}$ equipped with $\nu_{\cX} = \bslambda$, Sobolev embedding theory determines whether $K$ remains Hilbert--Schmidt or trace class when regarded as an operator into $L^2(\nu_{\cX})$, and whether it becomes compact when regarded as an operator into $C_b(\cX)$.
For $\ell > d/2$, the same Sobolev spaces can also be realized as RKHSs through Mat\'ern or Wendland kernels, cf.\ \Cref{prop:matern_wendland_sobolev}, thereby connecting the present regularity criterion with RKHS interpolation theory to obtain explicit error rates of Galerkin-type approximations in~\Cref{subsubsec:galerkin_proj_approx}. 
The result follows from \Cref{prop:CME_regularization_mu} and the standard stability properties of compact, Hilbert--Schmidt and trace class operators under composition \citep[see, e.g.,][]{Reed1980Methods, Conway2007}.

\begin{assumption}[Density regularity condition]
\label{assump:general_Radon_Nikodym_setting}
$(\cX,\Sigma_{\cX})$ and $(\cY,\Sigma_{\cY})$ are standard Borel spaces,
$X\colon\Omega\to\cX$ and $Y\colon\Omega\to\cY$ are random variables.
We fix a regular conditional distribution $P_{Y\mid X= \quark}$ of $Y$ given $X$.
Further, $\nu_{\cY}$ is a $\sigma$-finite measure on $(\cY,\Sigma_{\cY})$ such that $P_{Y\mid X=x}$ is absolutely continuous with respect to $\nu_{\cY}$ for $P_X$-almost every $x\in\cX$, with jointly measurable Radon--Nikodym derivative $\eta(x,y)\coloneqq \frac{\rd P_{Y\mid X=x}}{\rd \nu_{\cY}}(y)$.
Further, $\cH$ is a separable Hilbert space of real-valued functions on $\cX$ such that $\eta \in L^2(\nu_{\cY};\cH)$, where we identify $\eta(y)$ with the section $\eta(\quark,y)$ for $y \in \cY$, that is, we view $\eta$ as Bochner-measurable map $\cY \to \cH, y \mapsto \eta(y) = \eta(\quark, y)$.
\end{assumption}

\begin{theorem}
\label{thm:CEO_bounds_various_spaces}
Let \Cref{assump:general_Radon_Nikodym_setting} hold and, whenever $\cH = H^{\ell}(\cX)$ with $\ell \geq 0$, let either $\cX = \bR^{d}$ or $\cX \subsetneq \bR^{d}$ Lipschitz bounded.
Then:
\begin{enumerate}[label = (\alph*)]
\item
\label{item:CEO_H_HS}
$K \in \HS( L^2(\nu_{\cY}) , \cH )$ with $\norm{K}_{\cL(L^2(\nu_{\cY}),\cH)}
\leq
\norm{K}_{\HS(L^2(\nu_{\cY}),\cH)}
=
\norm{\eta}_{L^2(\nu_{\cY};\cH)}$.
\item
\label{item:CEO_L2_HS}
If $\cH$ is continuously embedded in $L^2(\nu_\cX)$ for some measure $\nu_{\cX}$ on $\cX$ with embedding operator $\cE \colon \cH \hookrightarrow L^2(\nu_\cX)$, then
$K \in \HS( L^2(\nu_{\cY}) , L^2(\nu_\cX) )$ with
\[
\|K\|_{\HS( L^2(\nu_{\cY}) , L^2(\nu_\cX) )} \le \|\cE\|_{\cL(\cH,L^2(\nu_\cX))} \, \|\eta\|_{L^2(\nu_\cY;\cH)}\, .
\]
In particular, this is the case for $\cH = H^{\ell}(\cX)$ with $\ell \geq 0$ and $\nu_\cX = \bslambda$.
\item
\label{item:CEO_L2_Tr}
If, in addition, $\cE$ is Hilbert--Schmidt, then
$K \in \Tr( L^2(\nu_{\cY}) , L^2(\nu_\cX) )$ with
\[
\|K\|_{\Tr( L^2(\nu_{\cY}) , L^2(\nu_\cX) )}
\le \|\cE\|_{\HS(\cH,L^2(\nu_\cX))} \, \|\eta\|_{L^2(\nu_\cY;\cH)}\, .
\]
In particular, this is the case for Lipschitz bounded $\cX \subsetneq \bR^d$ and $\cH = H^{\ell}(\cX)$ with $\ell > d/2$ and $\nu_\cX = \bslambda$. 
\item
\label{item:CEO_Cb_compact}
If $\cH = H^{\ell}(\cX)$ with $\ell > d/2$, then $K \in \cL( L^2(\nu_{\cY}) ; C_{b}(\cX) )$ is compact.
\end{enumerate}
\end{theorem}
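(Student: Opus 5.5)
The plan is to reduce everything to \Cref{prop:CME_regularization_mu} and then compose with suitable embeddings.

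\textbf{Part (a).} First I identify $K$ with $T_\eta$. For $g\in L^2(\nu_{\cY})$ and $P_X$-a.e.\ $x$, absolute continuity gives
\[
(Kg)(x)=\int_{\cY} g(y)\,\eta(x,y)\,\nu_{\cY}(\rd y).
\]
The Bochner integral $T_\eta g\in\cH$ must be shown to agree with this pointwise integral. Here I use that point evaluations on $\cH$ are continuous, which holds when $\cH$ is an RKHS, as in the Sobolev cases with $\ell>d/2$. Continuous linear functionals commute with Bochner integrals, so $(T_\eta g)(x)=\int g(y)\eta(y)(x)\,\nu_{\cY}(\rd y)$.

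For general $\cH$, for example $L^2$-type spaces, I instead pair with an orthonormal basis $(h_m)$. Alternatively, I use that $\cH\hookrightarrow L^2_{\mathrm{loc}}$ and Fubini with the joint measurability of $\eta$, to identify the two functions almost everywhere. This identification step is the main technical obstacle, though it is routine. Once it is done, (a) is exactly \Cref{prop:CME_regularization_mu}.

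\textbf{Parts (b) and (c).} I write the $L^2$-valued operator as $\cE K$. Using $\|AB\|_{\HS}\le\|A\|_{\cL}\|B\|_{\HS}$, I get
\[
\|\cE K\|_{\HS}\le \|\cE\|\,\|\eta\|_{L^2(\nu_{\cY};\cH)} .
\]
Using $\|AB\|_{\Tr}\le\|A\|_{\HS}\|B\|_{\HS}$ gives the trace-class bound. The examples then follow from known facts:
\begin{itemize}
\item The embedding $H^\ell(\cX)\hookrightarrow L^2(\cX)$ with norm $\le 1$ is immediate from the Fourier definition, since $(1+\|\omega\|^2)^\ell\ge1$, and it passes to bounded domains via extensions.
\item For bounded Lipschitz $\cX$ and $\ell>d/2$, the embedding $H^\ell(\cX)\hookrightarrow L^2(\cX)$ is Hilbert--Schmidt. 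One route is through the RKHS realization from \Cref{prop:matern_wendland_sobolev}: an RKHS with bounded kernel embeds Hilbert--Schmidt into $L^2(\nu)$ for finite $\nu$, with $\|\cE\|_{\HS}^2=\int k(x,x)\,\rd x<\infty$. Norm equivalence then transfers this to $H^\ell(\cX)$. The other route is Maurin's theorem.
\end{itemize}

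\textbf{Part (d).} For $\ell>d/2$ the Sobolev embedding $H^\ell(\cX)\hookrightarrow C_b(\cX)$ is bounded, either by Fourier inversion on $\bR^d$ or via extension for Lipschitz domains. Composing a bounded operator with the compact operator $K$ from (a) yields a compact map into $C_b(\cX)$. Here the continuous representative is chosen, and again the identification from (a) ensures it coincides with $Kg$ a.e.
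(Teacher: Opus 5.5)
Your proposal is correct and follows essentially the same route as the paper. Part (a) is \Cref{prop:CME_regularization_mu} after identifying $K$ with $T_\eta$, and parts (b)--(d) follow by composing with the bounded or Hilbert--Schmidt embedding $\cE$ (Maurin's theorem) and with the Sobolev embedding $H^\ell(\cX)\hookrightarrow C_b(\cX)$, using the ideal properties of $\HS$ and $\Tr$. You identify the Bochner integral $T_\eta g$ with the pointwise conditional expectation more carefully than the paper, which does this in one line via the Radon--Nikodym identity. You also check the Hilbert--Schmidt embedding through a Mat\'ern realization with $\norm{\cE}_{\HS}^2=\int_{\cX}k(x,x)\,\rd x$; both are refinements rather than a different approach.
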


\begin{proof}
\ref{item:CEO_H_HS} follows from \Cref{prop:CME_regularization_mu}, since, for $g\in L^2(\nu_{\cY})$, the Radon--Nikodym identity for conditional expectations gives $Kg = T_{\eta} g$ for $P_X$-almost every $x\in\cX$.

\ref{item:CEO_L2_HS} holds because
$K \in \HS( L^2(\nu_{\cY}) , L^2(\nu_\cX) )$ is Hilbert--Schmidt as the composition of the Hilbert--Schmidt operator $K \in \HS( L^2(\nu_{\cY}) , \cH )$ and the bounded operator $\cE \in \cL(\cH , L^2(\nu_\cX) )$,
with the norm bound following from the corresponding submultiplicativity property.
This holds in particular for $\cH = H^\ell(\cX)$ with $\ell \geq 0$ since in this case $\cE$ is bounded by definition.

\ref{item:CEO_L2_Tr} holds since
$K \in \Tr( L^2(\nu_{\cY}) , L^2(\nu_\cX) )$ is trace class as the composition of the Hilbert--Schmidt operators $K \in \HS( L^2(\nu_{\cY}) , \cH )$ and $\cE \in \HS(\cH , L^2(\nu_\cX) )$,
again, with the corresponding submultiplicativity argument for the norms.
This holds in particular for Lipschitz bounded $\cX$ and $\cH = H^\ell(\cX)$ with $\ell > d/2$, since in this case $\cE$ is Hilbert-Schmidt \citep{Maurin1961}. 

\ref{item:CEO_Cb_compact} holds because
$K \in \cL( L^2(\nu_{\cY}) , C_{b}(\cX) )$ is compact as the composition of the Hilbert--Schmidt operator $K \in \HS( L^2(\nu_{\cY}) , \cH )$ and the bounded embedding $H^\ell(\cX) \hookrightarrow C_b(\cX)$ (by the Sobolev embedding theorem; \citealt[Theorem~4.12]{Adams2003}).
\end{proof}

\begin{remark}\label{rem:CEO_on_Lp}
    The integral operator from \Cref{prop:CME_regularization_mu} remains well-defined and bounded as a map $T_{\eta} \colon L^p(\nu_\cY)\to \cH$ whenever $\eta\in L^q(\nu_{\cY} ;\cH)$, for conjugate exponents $1 \leq p,q \leq \infty$ with $p^{-1} + q^{-1} = 1$: Hölder's inequality yields
    \[
    \int_{\cY}|g(y)|\,\norm{\eta(y)}_{\cH}\,\nu_{\cY}(\rd y)
    \leq
    \norm{g}_{L^p(\nu_\cY)}\norm{\eta}_{L^q(\nu_{\cY};\cH)}.
    \]
    In this sense, \Cref{prop:CME_regularization_mu} corresponds to the Hilbert case $p = q = 2$. An interesting special case is $p = \infty$ (hence $q=1$): choosing $\cH \simeq H^\ell(\cX)$ with $\ell > d/2$ (under the domain assumptions of \Cref{thm:CEO_bounds_various_spaces}), the Sobolev embedding $H^\ell(\cX) \hookrightarrow C_b(\cX)$ yields boundedness of $T_\eta \colon C_b(\cY) \to C_b(\cX)$.
    More generally, if differentiation under the integral sign is justified, the same Hölder argument applies pointwise to spatial derivatives of the kernel: for every multi-index $\alpha$ and $x \in \cX$,
    \begin{align*}
        |\partial_x^\alpha (Kg)(x)| \le \|g\|_{L^p(\nu_\cY)} \, \| (\partial_x^\alpha \eta)(x,\quark)\|_{L^q(\nu_{\cY})}.
    \end{align*}
    In particular, if $\sup_{x \in \cX} \|(\partial_x^\alpha \eta)(x,\quark)\|_{L^q(\nu_{\cY})} < \infty$ for every $|\alpha| \le \ell$, then the CEO maps $L^p$-observables into functions with uniformly bounded derivatives up to order $\ell$, i.e., $Kg \in W^{\ell,\infty}(\cX)$ for every $g \in L^p(\nu_\cY)$.
\end{remark}

\begin{remark}[Spectral structure and Koopman approximation]
\label{rem:spectral_koopman_approximation}
The Hilbert--Schmidt conclusions above imply compactness and hence provide a basic spectral approximation principle. In particular, if $K \in \HS( L^2(\nu_{\cY}) , L^2(\nu_\cX) )$, then it admits a singular value expansion
\begin{align*}
K g
&=
\sum_{n\geq 1}\sigma_n
\langle g,u_n\rangle_{L^2(\nu_{\cY})}\,v_n,
\qquad
\sum_{n\geq 1}\sigma_n^2<\infty.
\end{align*}
with orthonormal systems $(u_n)_{n \in \bN}\subset L^2(\nu_{\cY})$ and $(v_n)_{n \in \bN}\subset L^2(\nu_{\cX})$.
Truncating this expansion yields finite-rank approximations of $K$, which is the functional-analytic mechanism behind many spectral and Galerkin-type Koopman approximation methods.
If, in addition, $\cX=\cY$ and the Markov process is reversible with respect to the invariant measure $\nu = \nu_{\cX} = \nu_{\cY}$, then the corresponding Koopman operator is self-adjoint on $L^2(\nu)$, and the singular value expansion reduces to an eigenfunction expansion \citep{KlusNuskKolt2018:data_driven_model_red}, which provides a classical spectral setting for transfer and Koopman operators.
\end{remark}


\subsection{Verifiable Regularity Criterion for Conditional Mean Embeddings (CMEs)}
\label{sec:CME_and_abstract_results}

In this section, we specialize the preceding regularity criterion to the
operator-theoretic representation of conditional mean embeddings (CMEs), a
central tool in the kernel methods literature for representing conditional
distributions and a versatile tool across machine learning, including nonparametric
approaches to regression, probabilistic inference, and reinforcement learning
\citep{song2009hilbert,Grunewalder2012CMEregressors,Muandet2017KernelMeanEmbedding}.

For kernels $k_{\cX},k_{\cY}$ on $\cX,\cY$ with RKHSs $\cH_{k_{\cX}},\cH_{k_{\cY}}$
and feature maps $\phi_{\cX},\phi_{\cY}$ (cf.\ \Cref{sec:setup}), the object to be represented is the \emph{conditional kernel mean embedding} (CME)\footnote{Strictly speaking, we define these objects on $\cX_{Y} \coloneqq \{ x\in \cX \mid \bE[\norm{\phi_{\cY}(Y)}_{\cH_{k_{\cY}}}^{2}  \, | \, X=x] < \infty \}$, which has full $P_X$ measure by \Cref{ass:kernel_rv_pair}\ref{item:basic_assumptions_feature_map_L2}, and set them to zero on $\cX \setminus \cX_{Y}$.}
\begin{equation}
    \label{equ:MeanAndCovarianceConditional}
    \mu_{Y\mid X=x}
    =
    \bE[\phi_{\cY}(Y) \, | \, X=x],
    \qquad
    x\in \cX .
\end{equation}
The operator-theoretic identity underlying this representation---expressing $\mu_{Y\mid X=x}$, and more generally the CEO $K$, in terms of covariance and cross-covariance operators of the joint law of $(X,Y)$---can be traced back to \citet{fukumizu2004dimensionality}, \citet{song2009hilbert}, \citet{Klebanov2020CME}, \citet{Park2020KCME} and has since been studied extensively, also for other CEOs such as stochastic Koopman operators \citep{Klus2020Eigendecompositions}.
These works typically take the representability condition, that the CEO maps
between the prescribed RKHSs, i.e.\ \eqref{eq:cme_assumption} below, as a standing
\emph{assumption}, either imposed directly or encoded through source and embedding conditions \citep{Grunewalder2012CMEregressors,Klebanov2021LCM,Fukumizu2013KernelBayesRule,Li2024Optimal}.
This assumption is well known to be restrictive and hard to verify directly.
The main point of this section is that the abstract regularity criterion from
\Cref{thm:CEO_bounds_various_spaces} instead \emph{derives} it from regularity of the conditional density. In particular, when $\cH_{k_{\cX}}$ is norm-equivalent to a Sobolev space, representability reduces to Sobolev regularity of the transition density in the conditioning variable.

Throughout this subsection, we use the following assumption and notation:

\begin{assumption}[RKHS pair setting]
\label{assump:two_RKHS_setting}
$(\cX,\Sigma_{\cX})$ and $(\cY,\Sigma_{\cY})$ are standard Borel spaces,
$X\colon\Omega\to\cX$ and $Y\colon\Omega\to\cY$ are random variables.
We fix a regular conditional distribution $P_{Y\mid X= \quark}$ of $Y$ given $X$.
Further, $k_{\cX} \colon \cX \times \cX \to \bR$ and
$k_{\cY} \colon \cY \times \cY \to \bR$
are kernels such that both pairs $(X,k_{\cX})$ and $(Y,k_{\cY})$ satisfy \Cref{ass:kernel_rv_pair}.
\end{assumption}

For Hilbert-space valued random variables
$U\in \cL^2(\Omega,\Sigma,\bP;\cG)$ and
$V\in \cL^2(\Omega,\Sigma,\bP;\cH)$,
we denote the uncentred covariance operator by
\[
\Cov[U,V]
\coloneqq
\bE[U\otimes V]
\in \cL(\cH,\cG),
\]
where the expectation is understood as a Bochner integral \citep[Section~II.2]{diestel1977}.
Here, for $g\in\cG$ and $h\in\cH$, the rank-one operator
$g\otimes h\colon \cH\to\cG$ is defined by
$
(g\otimes h)h'
\coloneqq
\innerprod{h}{h'}_{\cH}\,g
$ for $h'\in\cH$.
Under \Cref{assump:two_RKHS_setting}, we denote the \emph{kernel mean embeddings} and the \emph{uncentred kernel covariance and cross-covariance operators} by
\begin{align*}
    \mu_{Y}
    & \coloneqq
    \bE[\phi_{\cY}(Y)] \in \cH_{k_{\cY}},&
    C_{YY}
    & \coloneqq
    \Cov[\phi_{\cY}(Y),\phi_{\cY}(Y)],&
    C_{YX}
    & \coloneqq
    \Cov[\phi_{\cY}(Y),\phi_{\cX}(X)],
    \\
    \mu_{X}
    & \coloneqq
    \bE[\phi_{\cX}(X)] \in \cH_{k_{\cX}},&
    C_{XY}
    & \coloneqq
    \Cov[\phi_{\cX}(X),\phi_{\cY}(Y)],&
    C_{XX}
    & \coloneqq
    \Cov[\phi_{\cX}(X), \phi_{\cX}(X)],
\end{align*}
as well as the conditional mean embedding (CME) $\mu_{Y\mid X=x}$ by~\eqref{equ:MeanAndCovarianceConditional}.

\medskip

\begin{figure}[!t]
\centering
\adjustbox{scale=0.9}{
\begin{tikzpicture}[
    >=Latex,
    colbox/.style={
        draw, rounded corners, align=center, inner sep=6pt,
        text width=7.4cm, minimum height=1.85cm
    },
    arr/.style={->, thick},
    lab/.style={font=\footnotesize, inner sep=2pt},
    lableft/.style={lab, anchor=east, align=right, xshift=-4pt},
    labright/.style={lab, anchor=west, align=left, xshift=4pt}
]

\node[align=center] at (-4.7,1.1)
    {\textbf{original spaces} $\cX,\ \cY$};
\node[align=center] at (4.7,1.1)
    {\textbf{RKHS feature spaces} $\cH_{k_\cX},\ \cH_{k_\cY}$};

\node[colbox, anchor=north] (orig_top) at (-4.7,0.4)
{
$x\in\cX,\quad y\in\cY$,
\\[0.4ex]
$X\sim P_X,\ \ Y\sim P_Y,\ \ (X,Y)\sim P_{XY}$
};

\node[colbox, anchor=north] (rkhs_top) at (4.7,0.4)
{
$\phi_{\cX}(x),\ \phi_{\cY}(y),\ \phi_{\cX}(X),\ \phi_{\cY}(Y)$,
\\[0.4ex]
$\mu_X,\ \mu_Y,\ C_{XX},\ C_{YY},\ C_{XY},\ C_{YX}$
};

\node[colbox, anchor=north] (orig_cond) at (-4.7,-2.6)
{
$\begin{aligned}
(Y \, |\, X=x) &\sim P_{Y\mid X=x}
\\[0.4ex]
(Kg)(x) &= \bE[g(Y)\, |\, X=x]
\end{aligned}$
};

\node[colbox, anchor=north] (rkhs_cond) at (4.7,-2.6)
{
$\begin{aligned}
\mu_{Y\mid X = \quark} &= (C_{XX}^{\dagger} C_{XY})^{*}\,\phi_{\cX},
\\[0.4ex]
Kg = \innerprod{g}{\mu_{Y\mid X=\quark}}_{\cH_{k_\cY}}
&= C_{XX}^{\dagger} C_{XY}\, g
\end{aligned}$
};


\draw[arr] (orig_top.east) --
    node[lab, anchor=south, yshift=3pt]{embed}
    node[lab, anchor=north, yshift=-3pt]{$\phi_{\cX},\ \phi_{\cY}$}
    (rkhs_top.west);

\draw[arr] (orig_cond.east) --
    node[lab, anchor=south, yshift=3pt]{embed}
    node[lab, anchor=north, yshift=-3pt]{$\phi_{\cY}$}
    (rkhs_cond.west);

\draw[arr] (orig_top.south) --
    node[lableft]{conditioning\\ on $X=x$}
    (orig_top.south |- orig_cond.north);

\draw[arr] (rkhs_top.south) --
    node[lableft]{conditional\\ mean embedding}
    (rkhs_top.south |- rkhs_cond.north);

\end{tikzpicture}
}
\caption{
Conditioning in the original spaces (left)
and in the RKHS feature spaces (right), linked by the canonical feature maps
$\phi_{\cX},\phi_{\cY}$. After embedding, conditioning becomes a
\emph{linear} transformation of the feature vector $\phi_{\cX}(x)$.
}
\label{fig:combined_cme_applications}
\end{figure}

Note that $\mu_{Y\mid X=x}$ is well-defined for $P_{X}$-almost every $x \in \cX$.
The reproducing property \eqref{equ:basic_reproducing_property_RKHS} translates to kernel mean embeddings:
\begin{equation}
\label{equ:KME_reproducing_property}
\innerprod{g}{\mu_{Y}}_{\cH_{k_{\cY}}}
=
\innerprod{g}{\bE[\phi_{\cY}(Y)]}_{\cH_{k_{\cY}}}
=
\bE \big[\innerprod{g}{\phi_{\cY}(Y)}_{\cH_{k_{\cY}}} \big]
=
\bE[g(Y)],
\qquad
g \in \cH_{k_{\cY}},
\end{equation}
and similarly
$\innerprod{g}{\mu_{Y|X=x}}_{\cH_{k_{\cY}}}
=
\bE[g(Y)\, | \, X=x]$ $P_X$-almost everywhere.

\medskip

We briefly recall the basic CME representation result following \citet{Klebanov2020CME}.
\Cref{fig:combined_cme_applications} illustrates the two viewpoints on conditioning connected by this result: conditioning of $X,Y$ in the original spaces $\cX,\cY$ (left column), and its counterpart after embedding into the RKHS feature spaces $\cH_{k_\cX},\cH_{k_\cY}$, where it is captured by the covariance-operator representation (right column). The appeal of the latter description is that conditioning becomes a \emph{linear} transformation of the embedded objects, which can be estimated from samples of $P_{XY}$ by linear-algebraic operations alone.
The following theorem is a simplified version of \citet[Theorem~5.3]{Klebanov2020CME}.

\begin{theorem}[{CME representation; \citealt[Theorem~5.3]{Klebanov2020CME}}]\label{thm:cme_representation}
Under \Cref{assump:two_RKHS_setting}, if
\begin{equation}\label{eq:cme_assumption}
Kg
\coloneqq
\bE[g(Y) \, | \, X = \quark] \in \cH_{k_{\cX}}
\qquad
\text{for every } g \in \cH_{k_{\cY}},
\end{equation}
then the linear operator
$C_{XX}^{\dagger} C_{XY} \colon \cH_{k_{\cY}} \to \cH_{k_{\cX}}$
is bounded and, $P_X$-almost everywhere,
\begin{align}
\label{eq:uncentred_cme_formula}
\mu_{Y\mid X= \quark}
&=
(C_{XX}^{\dagger} C_{XY})^{*}\, \phi_{\cX},
\\
\label{eq:uncentred_cme_formula_g}
Kg
&=
C_{XX}^{\dagger} C_{XY}\, g,
&&
\text{for every } g \in \cH_{k_{\cY}}.
\end{align}
\end{theorem}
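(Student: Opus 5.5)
Under \eqref{eq:cme_assumption}, the plan is to prove boundedness of $K$ first, then show $C_{XY}g = C_{XX}Kg$, and finally invert $C_{XX}$ on the relevant range.

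\textbf{Step 1: boundedness.} We first show that $K\colon \cH_{k_{\cY}}\to\cH_{k_{\cX}}$ is bounded, via the closed graph theorem. By \Cref{ass:kernel_rv_pair}\ref{item:basic_assumptions_ae_separation}, $Kg$ is a uniquely determined element of $\cH_{k_{\cX}}$, so $K$ is a well-defined linear map. Suppose $g_n\to g$ in $\cH_{k_{\cY}}$ and $Kg_n\to h$ in $\cH_{k_{\cX}}$. The embedding $\iota_{k_{\cY},Y}$ is bounded, so $g_n\to g$ in $L^2(P_Y)$. Conditional expectation is an $L^2$-contraction, so $Kg_n\to Kg$ in $L^2(P_X)$. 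The embedding $\iota_{k_{\cX},X}$ is bounded, so $Kg_n\to h$ in $L^2(P_X)$. Hence $h=Kg$ $P_X$-a.e., and injectivity gives $h=Kg$ in $\cH_{k_{\cX}}$.

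\textbf{Step 2: the key identity $C_{XY}g=C_{XX}Kg$.} Fix $g\in\cH_{k_{\cY}}$ and $f\in\cH_{k_{\cX}}$. Pulling the inner product inside the Bochner integral, using the reproducing property and the tower property, we get
\[
\innerprod{f}{C_{XY}g}_{\cH_{k_{\cX}}}=\bE[f(X)g(Y)]=\bE[f(X)(Kg)(X)]=\innerprod{f}{C_{XX}Kg}_{\cH_{k_{\cX}}}.
\]
Integrability of $f(X)g(Y)$ follows from Cauchy--Schwarz and \Cref{ass:kernel_rv_pair}\ref{item:basic_assumptions_feature_map_L2}. Since $f$ is arbitrary, $C_{XY}=C_{XX}K$.

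\textbf{Step 3: pseudoinverse and kernel of $C_{XX}$.} This is the step I expect to be the main obstacle, since $C_{XX}$ is typically not invertible and $C_{XX}^\dagger$ is unbounded on a dense domain. Step~2 gives $\operatorname{ran}C_{XY}\subseteq\operatorname{ran}C_{XX}=\operatorname{dom}$-relevant part of $C_{XX}^\dagger$, and hence
\[
C_{XX}^\dagger C_{XY}g = P_{\overline{\operatorname{ran}}C_{XX}}\,Kg .
\]
Next, $\ker C_{XX}=\{0\}$: if $C_{XX}f=0$, then $\innerprod{f}{C_{XX}f}=\bE[f(X)^2]=0$, so $f=0$ $P_X$-a.e., and injectivity gives $f=0$. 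Therefore $\overline{\operatorname{ran}}C_{XX}=(\ker C_{XX})^\perp=\cH_{k_{\cX}}$, the projection is the identity, and $C_{XX}^\dagger C_{XY}=K$. This proves \eqref{eq:uncentred_cme_formula_g} and also shows the operator is bounded by Step~1.

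\textbf{Step 4: the CME formula.} For $P_X$-a.e.\ $x$ and every $g\in\cH_{k_{\cY}}$, we have
\[
\innerprod{g}{(C_{XX}^\dagger C_{XY})^*\phi_{\cX}(x)}_{\cH_{k_{\cY}}}=(Kg)(x)=\innerprod{g}{\mu_{Y\mid X=x}}_{\cH_{k_{\cY}}}.
\]
One subtlety is that the exceptional null set must not depend on $g$. We handle this by taking a countable dense subset of the separable space $\cH_{k_{\cY}}$ and passing to the limit: both sides are continuous in $g$ for fixed $x\in\cX_Y$. This yields \eqref{eq:uncentred_cme_formula}.
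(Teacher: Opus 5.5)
Your argument is correct and complete. The paper gives no proof of its own; it imports the statement from \citet[Theorem~5.3]{Klebanov2020CME}. Your four steps are the standard ingredients behind that result: boundedness of $K$ by the closed graph theorem via $L^2$-contractivity of conditional expectation; the identity $C_{XY}=C_{XX}K$ from the tower property; $\ker C_{XX}=\{0\}$ from \Cref{ass:kernel_rv_pair}\ref{item:basic_assumptions_ae_separation}, so that $C_{XX}^{\dagger}C_{XY}=K$; and a countable dense subset of $\cH_{k_{\cY}}$ to make the exceptional null set independent of $g$. The only fix is cosmetic: Step~3 should read $\operatorname{ran}C_{XY}\subseteq\operatorname{ran}C_{XX}\subseteq\operatorname{dom}C_{XX}^{\dagger}=\operatorname{ran}C_{XX}\oplus(\operatorname{ran}C_{XX})^{\perp}$.
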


Note that \eqref{eq:uncentred_cme_formula_g} follows from \eqref{eq:uncentred_cme_formula} by the reproducing properties \eqref{equ:basic_reproducing_property_RKHS} and \eqref{equ:KME_reproducing_property}.
Conversely, \eqref{eq:uncentred_cme_formula_g} determines the conditional mean embedding through its action on all $g\in\cH_{k_{\cY}}$.
The preceding theorem shows that the CME representation is available once the representability condition \eqref{eq:cme_assumption} has been verified.
The following corollary gives precisely such a verification criterion in terms of the Radon--Nikodym derivative of the conditional law.

\begin{corollary}[CME representation under the density regularity condition]
\label{cor:regularity_conditional_expectation_operator_CME}
Under \Cref{assump:general_Radon_Nikodym_setting,assump:two_RKHS_setting} with $\cH = \cH_{k_{\cX}}$, if $\cH_{k_{\cY}} \hookrightarrow L^2(\nu_{\cY})$ continuously, then $K \in \HS(\cH_{k_\cY},\cH_{k_\cX})$, the CME assumption \eqref{eq:cme_assumption} is satisfied for every $g \in \cH_{k_{\cY}}$, and the CME representations \eqref{eq:uncentred_cme_formula}--\eqref{eq:uncentred_cme_formula_g} hold.
\end{corollary}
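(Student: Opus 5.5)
The plan is to factor $K$ on $\cH_{k_\cY}$ through $L^2(\nu_\cY)$, writing $K|_{\cH_{k_\cY}} = K \circ \iota$. Here $\iota\colon \cH_{k_\cY}\hookrightarrow L^2(\nu_\cY)$ is the assumed continuous embedding, and $K\colon L^2(\nu_\cY)\to\cH_{k_\cX}$ is the CEO from \Cref{thm:CEO_bounds_various_spaces}\ref{item:CEO_H_HS}. By that result (with $\cH=\cH_{k_\cX}$), $K\in\HS(L^2(\nu_\cY),\cH_{k_\cX})$ with $\norm{K}_{\HS}=\norm{\eta}_{L^2(\nu_\cY;\cH_{k_\cX})}$. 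The composition of a Hilbert--Schmidt operator with a bounded one is Hilbert--Schmidt, so
\[
K\circ\iota\in\HS(\cH_{k_\cY},\cH_{k_\cX}), \qquad \norm{K\circ\iota}_{\HS}\le\norm{\iota}_{\cL(\cH_{k_\cY},L^2(\nu_\cY))}\,\norm{\eta}_{L^2(\nu_\cY;\cH_{k_\cX})}.
\]

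Next we must check that the operator $K\circ\iota$ actually coincides with the conditional expectation $g\mapsto\bE[g(Y)\mid X=\quark]$ for each $g\in\cH_{k_\cY}$, as an element of $\cH_{k_\cX}$ understood $P_X$-a.e. This is where some care is needed, and I expect it to be the main (though mild) obstacle. Fix $g\in\cH_{k_\cY}$. By \Cref{ass:kernel_rv_pair}\ref{item:basic_assumptions_feature_map_L2} for $(Y,k_\cY)$, $g(Y)$ is integrable, so $\bE[g(Y)\mid X=x]=\int g(y)\,\eta(x,y)\,\nu_\cY(\rd y)$ for $P_X$-a.e. $x$ by the Radon--Nikodym relation. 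We must compare this with the Bochner integral $T_\eta g=\int g(y)\eta(y)\,\nu_\cY(\rd y)\in\cH_{k_\cX}$ evaluated at $x$.

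Since $\cH_{k_\cX}$ is an RKHS, point evaluation $h\mapsto h(x)=\innerprod{h}{\phi_\cX(x)}$ is a continuous linear functional. It therefore commutes with the Bochner integral, giving $(T_\eta g)(x)=\int g(y)\,\eta(y)(x)\,\nu_\cY(\rd y)$ for every $x$. It remains to identify $\eta(y)(x)$ with the jointly measurable density $\eta(x,y)$. The two agree for $\nu_\cY$-a.e. $y$ as elements of $\cH_{k_\cX}$, hence $P_X$-a.e. in $x$, with injectivity from \Cref{ass:kernel_rv_pair}\ref{item:basic_assumptions_ae_separation}. A Fubini argument on the product $P_X\otimes\nu_\cY$, using joint measurability, then gives equality for $P_X$-a.e. $x$ and $\nu_\cY$-a.e. $y$, which suffices. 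Consequently $Kg=T_\eta g\in\cH_{k_\cX}$ $P_X$-a.e.

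With this identification, the representability condition \eqref{eq:cme_assumption} holds for every $g\in\cH_{k_\cY}$. Because \Cref{assump:two_RKHS_setting} is assumed, \Cref{thm:cme_representation} applies directly and yields boundedness of $C_{XX}^\dagger C_{XY}$ together with \eqref{eq:uncentred_cme_formula}--\eqref{eq:uncentred_cme_formula_g}. In particular $C_{XX}^\dagger C_{XY}=K|_{\cH_{k_\cY}}$ is the Hilbert--Schmidt operator from the first step; this uses injectivity of $\cH_{k_\cX}\hookrightarrow L^2(P_X)$ to pass from a.e. equality to equality in $\cH_{k_\cX}$.
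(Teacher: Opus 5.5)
Your proposal is correct and follows the paper's own route. You factor $K|_{\cH_{k_\cY}}$ through the embedding $\cH_{k_\cY}\hookrightarrow L^2(\nu_\cY)$, get the Hilbert--Schmidt property from \Cref{thm:CEO_bounds_various_spaces}\ref{item:CEO_H_HS}, and conclude with \Cref{thm:cme_representation}; the only difference is that you explicitly justify $Kg = T_{\eta} g$ $P_X$-a.e. (point evaluation through the Bochner integral plus a Fubini argument), which the paper absorbs into the proof of \Cref{thm:CEO_bounds_various_spaces}\ref{item:CEO_H_HS} by citing the Radon--Nikodym identity.
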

\begin{proof}
Since \Cref{assump:general_Radon_Nikodym_setting} holds with $\cH=\cH_{k_{\cX}}$, \Cref{thm:CEO_bounds_various_spaces}\ref{item:CEO_H_HS} yields $K \in \HS(L^2(\nu_{\cY}) , \cH_{k_{\cX}})$.
Hence, $K \in \HS(\cH_{k_\cY},\cH_{k_\cX})$ as the composition of the continuous embedding $\cH_{k_\cY} \hookrightarrow L^2(\nu_\cY)$ and a Hilbert--Schmidt operator; in particular, $Kg \in \cH_{k_{\cX}}$ for every $g\in\cH_{k_\cY}$.
Thus \eqref{eq:cme_assumption} holds, and the assertion follows from \Cref{thm:cme_representation}.
\end{proof}

\begin{remark}[Special cases of \Cref{cor:regularity_conditional_expectation_operator_CME}]
\label{rem:special_cases_koopman_mu}
The above corollary includes the following cases:
\begin{enumerate}[label=(\roman*)]

\item
The function
\[
\eta(x,y)
=
\frac{\rd P_{Y\mid X=x}}{\rd \nu_{\cY}}(y)
\]
is the transition density of the Markov kernel $P_{Y\mid X=x}$ with respect to $\nu_{\cY}$.
A particularly important case is when $\nu_{\cY}$ is the Lebesgue measure on
$\cY\subseteq\mathbb R^d$.
In this setting, the embedding
$\cH_{k_{\cY}}\hookrightarrow L^2(\nu_{\cY})$
can be ensured by the spectral density condition in \Cref{prop:rkhs_in_L2_via_spectral_density}, which is satisfied for many common kernels.

\item
If $\cX=\bR^d$ or $\cX\subseteq\bR^d$ is a bounded Lipschitz domain and
$\eta \in L^2(\nu_{\cY};\cH)$ for an RKHS
$\cH \simeq \cH_{k_{\cX}}$ with $\ell\ge 1$, then the CME representations hold.
This applies in particular to the following kernels, cf.\ \Cref{prop:matern_wendland_sobolev}:
\begin{enumerate}[label=(\alph*)]
\item
Mat\'ern kernels $k_{\cX}=k_\beta^{\mathrm{Mat}}$ with $2\beta+d\le2\ell$, cf.~\eqref{equ:def_Matern_kernel};
\item
Wendland kernels $k_{\cX}=k_{d,j}^{\mathrm{Wen}}$ with $2j+d+1\le2\ell$, cf.~\eqref{equ:def_Wendland_kernel}.
\end{enumerate}

\item
If $\nu_{\cY} = P_Y$, then
$\cH_{k_{\cY}} \hookrightarrow L^2(\nu_{\cY})$
is automatically satisfied under \Cref{ass:kernel_rv_pair}.
From a Bayesian viewpoint,
\[
\eta(x,y)
\coloneqq
\frac{\rd P_{Y\mid X=x}}{\rd P_Y}(y)
\]
is the likelihood function $L_x(y)$ up to an $x$-dependent normalizing constant
$Z(x)=\bE[L_x(Y)]$; cf.~\Cref{sec:Bayesian_IP}.
\end{enumerate}
\end{remark}

In Sections~\ref{sec:regression}--\ref{sec:koopman_sde_class}, we illustrate how to apply and leverage \Cref{cor:regularity_conditional_expectation_operator_CME} in various applications, including Bayesian inverse problems and conditional mean embeddings for Koopman operator approximation.

\subsection{Error Bounds for Conditional Expectation Operator Approximation} \label{sec:error_bound_implications}

The abstract regularity results of \Cref{sec:cond_exp_koopman_operator} and the covariance-operator representation derived in \Cref{sec:CME_and_abstract_results} naturally give rise to finite-dimensional approximations of the CEO $K$ defined in~\eqref{equ:generic_CEO}. On the one hand, once $K$ is known to be a Hilbert--Schmidt operator, it can be approximated by Galerkin projections onto finite-dimensional subspaces of the underlying RKHSs. On the other hand, the covariance-operator representation \eqref{eq:uncentred_cme_formula_g} yields the empirical conditional mean embedding estimator that is widely used in literature on kernel methods \citep{Grunewalder2012CMEregressors,Klus2020Eigendecompositions}.

In this section, we derive approximation error bounds for such estimators and relate the resulting convergence rates to the regularity of the conditional density established in the previous subsections. In particular, we consider numerical approximation via three approaches:
\begin{enumerate}[label = (\roman*)]
\item
the one-sided Galerkin projection $\widehat{K} = \cP_{\bsX} K$,
\item 
the two-sided Galerkin projection $\widetilde{K} \coloneqq \cP_{\bsX} K \cP_{\bsY}$, also known as kernel EDMD \citep{WillRowl15},
\item 
an approximation $\hat{K}_{\varepsilon}$ to the CME representation $K = C_{XX}^{\dagger} C_{XY}$, cf.\ \eqref{eq:uncentred_cme_formula_g},
\end{enumerate}
where $\cP_{\bsX}$ denotes the orthogonal projection onto $\operatorname{span} \{ \phi_\cX(x_1),\dots,\phi_\cX(x_{n_{\cX}})\} \subset \cH_{k_\cX}$ for data points $\bsX = \{x_1,\dots,x_{n_{\cX}}\} \subset \cX \subseteq \bR^{d}$ (and $\cP_{\bsY}$ accordingly).
For this purpose, we introduce the following notation:

\begin{notation}\label{notation:CEO_approximations}
Let $\bsX = (x_i)_{i=1}^{n_{\cX}}$ be a collection of data points in $\cX$ and 
let $\phi_{\cX} : \cX \to \cH_{k_{\cX}}$ 
denote the feature map associated with a kernel $k_{\cX}$ on $\cX$.
We define
\[
\Phi_{\bsX} = (\phi_{\cX}(x_1), \dots, \phi_{\cX}(x_{n_{\cX}})),
\quad
\text{and}
\quad 
f_{\bsX}
\coloneqq
\Phi_{\bsX}^{\top} f
\coloneqq
(\innerprod{\phi_{\cX}(x_{i})}{f}_{\cH_{k_\cX}})_{i=1}^{n_{\cX}}
=
(f(x_{i}))_{i=1}^{n_{\cX}}
\in\bR^{n_{\cX}},
\]
for $f \in \cH_{k_{\cX}}$, and the corresponding unregularized and regularized Gram matrices
\[
G_{\bsX \bsX} = \Phi_{\bsX}^\top \Phi_{\bsX} = (k_{\cX}(x_i, x_j))_{i,j=1}^{n_{\cX}},
\qquad
G_{\bsX \bsX}^{\varepsilon} = G_{\bsX \bsX} + n_{\cX} \, \varepsilon \, \Id_{n_{\cX}},
\]
where $\varepsilon \geq 0$.
Additionally, let $\bsY = (y_i)_{i=1}^{n_{\cY}}$ be a collection of data points in $\cY$ and 
let $\phi_{\cY} \colon \cY \to \cH_{k_{\cY}}$ 
denote the feature map associated with a kernel $k_{\cY}$ on $\cY$, and define $\Phi_{\bsY}$, $g_{\bsY}$ and $G_{\bsY \bsY}$ similarly.
We denote
\[
\bsK_{\bsX \bsY}
\coloneqq
\big( \bE[ k_\cY(y_j,Y)  \, | \, X=x_i] \big)_{i,j}
\approx
\Big( M^{-1}\sum_{m=1}^{M} k_\cY(y_j,z_{m}^{(i)}) \Big)_{i,j}
\eqqcolon
\widehat{\bsK}_{\bsX \bsY} \, ,
\]
where we used a Monte--Carlo approximation with independent samples $z_{m}^{(i)} \sim P_{Y|X=x_{i}}$.
In the case of independent pairs $(x_{i},y_{i}) \sim P_{XY}$, with $n_{\cX} = n_{\cY} \eqqcolon n$,
the empirical (cross-)covariance operators are defined by
\[
\widehat{C}_{\bsX \bsX}
= \frac{1}{n} \sum_{i=1}^n \phi_{\cX}(x_i) \otimes \phi_{\cX}(x_i)
= \frac{1}{n} \, \Phi_{\bsX} \Phi_{\bsX}^\top,
\quad
\widehat{C}_{\bsY \bsX}
= \frac{1}{n} \sum_{i=1}^n \phi_{\cY}(y_i) \otimes \phi_{\cX}(x_i)
= \frac{1}{n} \, \Phi_{\bsY} \Phi_{\bsX}^\top.
\]
\end{notation}

\begin{table}[!t]
	\centering
	\small
	\setlength{\tabcolsep}{5pt}
	%
	{\renewcommand{\arraystretch}{1.5}%
		\begin{tabular}{@{}l@{\hspace{3.6em}}l@{}}
			\toprule
			\multicolumn{2}{@{}l}{\textbf{Notation \& symbols}\quad (on $\cX$; the corresponding objects on $\cY$ are defined analogously)}\\
			\midrule
			domain $\cX$ &
			feature matrix $\Phi_{\bsX} = (\phi_{\cX}(x_i))_{i=1}^{n_\cX}$\\
			random variable $X \colon \Omega \to \cX$ &
			evaluation vector $f_{\bsX} = \Phi_{\bsX}^{\top} f = (f(x_i))_{i=1}^{n_\cX}$\\
			data $\bsX = (x_i)_{i=1}^{n_\cX}$ &
			Gram matrix $G_{\bsX\bsX} = \Phi_{\bsX}^{\top}\Phi_{\bsX}$\\
			kernel $k_{\cX}$ &
			conditional-evaluation matrix $\bsK_{\bsX\bsY} = \bigl(\bE[k_{\cY}(y_j,Y)  \, | \, X = x_i]\bigr)_{i,j}$\\
			RKHS $\cH_{k_\cX}$ &
			empirical covariance operators $\widehat{C}_{XX} = \tfrac{1}{n}\Phi_{\bsX}\Phi_{\bsX}^{\top}$,\;\; $\widehat{C}_{YX} = \tfrac{1}{n}\Phi_{\bsY}\Phi_{\bsX}^{\top}$\\
			feature map $\phi_{\cX}$ &
			fill distance $\fd_{\bsX} = \sup_{x \in \cX} \min_{x_i \in \bsX} \|x - x_i\|_2$\\
			\bottomrule
	\end{tabular}}
	
	\vspace{1.2ex}
	
	{\renewcommand{\arraystretch}{1.6}%
		\begin{tabular}{@{}>{\raggedright\arraybackslash}p{2.38cm}
				>{\raggedright\arraybackslash}p{4.15cm}
				>{\raggedright\arraybackslash}p{4.15cm}
				>{\raggedright\arraybackslash}p{4.15cm}@{}}
			\toprule
			&
			\textbf{One-sided projection}
			&
			\textbf{Two-sided projection}
			&
			\textbf{CME regression}
			\\
			\midrule
			\textbf{operator}
			&
			$K \colon L^2(\nu_\cY) \to \cH_{k_\cX}$
			&
			$K \colon \cH_{k_\cY} \to \cH_{k_\cX}$
			&
			$K \colon \cH_{k_\cY} \to \cH_{k_\cX}$
			\\
			\textbf{approxima-tion ansatz}
			&
			$\widehat{K} = \cP_{\bsX} K$ (projecting images onto trial space)
			&
			$\widetilde{K} = \cP_{\bsX} K \cP_{\bsY}$ (projection in domain and range)
			&
			$K = C_{XX}^{\dagger} C_{XY}$
            \newline
            (covariance representation)
			\\
			\textbf{required data}
			&
			$\bsX$ and $z_m^{(i)} \overset{\text{iid}}{\sim} P_{Y \mid X = x_i}$
			&
			$\bsX$, $\bsY$ and $z_m^{(i)} \overset{\text{iid}}{\sim} P_{Y \mid X = x_i}$
			&
			pairs $(x_i, y_i) \overset{\text{i.i.d.}}{\sim} P_{XY}$
			\\
			\textbf{$K$-estimator}
			&
			$\Phi_{\bsX} G_{\bsX\bsX}^{-1}\, \widehat{\Phi_{\bsX}^{\top} K}$
			&
			$\Phi_{\bsX} G_{\bsX\bsX}^{-1} \widehat{\bsK}_{\bsX\bsY} G_{\bsY\bsY}^{-1} \Phi_{\bsY}^{\top}$
			&
			$\Phi_{\bsX} \bigl(G_{\bsX\bsX}^{\varepsilon}\bigr)^{-1} \Phi_{\bsY}^{\top}$
			\\
			\textbf{deterministic error}
			&
			range projection
			&
			domain and range projection
			&
			regularization
			\\
			\textbf{statistical error}
			&
			Monte Carlo estimation of $(Kg)_{\bsX}$
			&
			Monte Carlo estimation of $\bsK_{\bsX\bsY}$
			&
			empirical estimation of $C_{XX}$ and $C_{XY}$
			\\			
            \textbf{density regularity}
			&
			enters range projection error bounds
			&
			enters range projection error bounds
			&
			implies RKHS inclusion, eigenvalue decay of $C_{XX}$
			\\
			\textbf{main result}
			&
			\Cref{thm:err_bounds_proj_unified}: fill distance bound (range)
			&
			\Cref{thm:err_bounds_proj_unified}: fill distance bound (range and domain)
			&
			\Cref{prop:cme_learning_rate}: learning rate via eigenvalue decay
			\\
			\bottomrule
	\end{tabular}}
	\caption{%
		Notation overview (top) and comparison of the three approximation strategies for CEO (bottom).
Projection-based methods approximate the \emph{action} of the operator on trial
spaces built from the nodes $\bsX$, $\bsY$; CME methods approximate the operator
itself via empirical covariance operators.
The hat in $\widehat{\Phi_{\bsX}^{\top} K}$ denotes the Monte Carlo estimate of
$g \mapsto (Kg)_{\bsX}$ based on the samples $z_m^{(i)}$.}
	\label{tab:notation_and_comparison}
\end{table}

Then we obtain the following formulas for the one-sided and two-sided projections of the CEO,

\begin{empheq}[box=\fbox]{align*}
\cP_{\bsX}
&=
\Phi_{\bsX} G_{\bsX \bsX}^{-1} \Phi_{\bsX}^{\top},
&
\cP_{\bsX} K \cP_{\bsY}
&=
\Phi_{\bsX} G_{\bsX \bsX}^{-1} \bsK_{\bsX \bsY} G_{\bsY \bsY}^{-1} \Phi_{\bsY}^{\top},
\\
\cP_{\bsX} K g
&=
\Phi_{\bsX} G_{\bsX \bsX}^{-1} (Kg)_{\bsX},
&
\cP_{\bsX} K \cP_{\bsY}g
&=
\Phi_{\bsX} G_{\bsX \bsX}^{-1} \bsK_{\bsX \bsY} G_{\bsY \bsY}^{-1} g_{\bsY},
\end{empheq}
where in practice the matrix $\bsK_{\bsX \bsY}$ is approximated by $\widehat{\bsK}_{\bsX \bsY}$ and, similarly, $(Kg)_{\bsX}$ is approximated by $\big( M^{-1}\sum_{m=1}^{M} g(z_{m}^{(i)}) \big)_{i}$.
Further, the empirical CEO and (regularized) CME read (for the final identities see, e.g., \citealt[Remark~2.9]{Klus2020Eigendecompositions})
\begin{empheq}[box=\fbox]{alignat=2}
\label{equ:K_CME_approximation}
K
&\ \approx\ 
\hat{K}_{\varepsilon}
\ \coloneqq\ 
\bigl(\widehat{C}_{\bsX \bsX} + \varepsilon \Id \bigr)^{-1} \widehat{C}_{\bsY \bsX}^{\top}
&\ &=\ 
\Phi_{\bsX} \bigl(G_{\bsX \bsX}^{\varepsilon}\bigr)^{-1} \Phi_{\bsY}^\top \, ,
\\[0.2ex]
\mu_{Y|X=\quark}
&\ \approx\ 
\hat{K}_{\varepsilon}^{\ast} \circ \phi_{\cX}
&\ &=\ 
\Phi_{\bsY} \bigl(G_{\bsX \bsX}^{\varepsilon}\bigr)^{-1} \Phi_{\bsX}^\top \, \circ \phi_{\cX} \, .
\end{empheq}

\Cref{tab:notation_and_comparison} summarizes the notation and different approximation philosophies.

\begin{remark}[Which approximation for which application?]
\label{rem:which_approximation}
The three schemes differ in the data they require (cf.\ \Cref{tab:notation_and_comparison}): the projection-based approximants $\widehat{K}$ and $\widetilde{K}$ need conditional samples $z_m^{(i)} \sim P_{Y \mid X = x_i}$, whereas the CME estimator $\hat{K}_\varepsilon$ only needs joint samples $(x_i, y_i) \sim P_{XY}$.
The former are thus natural for \emph{stochastic dynamical systems} (\Cref{sec:koopman_sde_class}), where conditional sampling amounts to simulating the process forward from $x_i$, while the CME estimator is preferable in \emph{Bayesian inverse problems} (\Cref{sec:Bayesian_IP}), where conditional sampling means costly posterior sampling, and in \emph{nonparametric regression} (\Cref{sec:regression}), where data naturally arrive as i.i.d.\ pairs $(x_i,y_i) \sim P_{XY}$.
\end{remark}

\subsubsection{Error Bounds for Galerkin-type Approximations} \label{subsubsec:galerkin_proj_approx}

The regularization results of \Cref{sec:cond_exp_koopman_operator} imply that CEOs often map into smooth function spaces, such as Sobolev spaces and their RKHS realizations. In the following result, we exploit this regularity gain to derive approximation error bounds for the Galerkin-type approximations $\widehat{K}$ and $\widetilde{K}$, the latter of which is also known as (kernel) extended dynamic mode decomposition (EDMD) \citep{WillRowl15} and has since become a standard tool for data-driven Koopman operator approximation, with quantitative error bounds established by, e.g., \citet{KohnPhil24}, \citet{hertel2025koopmanstochasticdynamicserror}, \citet{PhilScha25:kernel:JNLS}.
For this, we briefly recall the class of kernels induced by \emph{radial basis functions} (RBF), which are a special case of translation-invariant kernels characterized by a function $\psi: \bR_{\ge 0} \to \bR$ such that $k(x, x') = \psi(\|x - x'\|_2)$ for all $x, x'$. 

\begin{theorem}[Convergence rates for projection-based CEO approximations]\label{thm:err_bounds_proj_unified}
    Let $\cH_{k_\cX}$ and $\cH_{k_\cY}$ be RKHSs over $\cX$ and $\cY$, respectively. Assume that \Cref{assump:general_Radon_Nikodym_setting} holds with $\cH = \cH_{k_\cX}$. Let $\cF$ be a normed linear space of functions over $\cX$ such that $\cH \hookrightarrow \cF$ continuously, and assume that $\cH_{k_\cY} \hookrightarrow L^2(\nu_\cY)$ continuously. Let data sets $\bsX = (x_i)_{i=1}^{n_\cX} \subset \cX$ and $\bsY = (y_i)_{i=1}^{n_\cY} \subset \cY$ be given.
    Then, using \Cref{notation:CEO_approximations}, the following hold:

    \begin{enumerate} [label = (\alph*)]
        \item \label{item:thm_err_bounds_proj_unified_general_one_sided}
        $\displaystyle \norm{K - \widehat{K}}_{\cL(L^2(\nu_\cY) , \cF)} \leq \norm{\eta}_{L^2(\nu_{\cY};\cH)} \, \norm{\Id - \cP_{\bsX}}_{\cL(\cH,\cF)}$, where $\displaystyle \norm{\eta}_{L^2(\nu_{\cY};\cH)} < \infty$. 
        \item \label{item:thm_err_bounds_proj_unified_general_two_sided}
        There exist $C_\cX, C_\cY > 0$ such that
        \begin{equation}        \begin{aligned}\label{eq:general_bound_two_sided_proj}
            \displaystyle \norm{K - \widetilde{K}}_{\cL(\cH_{k_\cY},\cF)} &\leq C_{\cY} \norm{\eta}_{L^2(\nu_\cY; \cH_{k_\cX})} \, \norm{\Id -\cP_{\bsX}}_{\cL(\cH_{k_\cX}, \cF)} \nonumber \\ 
            &+ C_{\cX} \, \norm{\eta}_{L^2(\nu_\cY; \cH_{k_\cX})} \, \norm{\Id-\cP_{\bsY}}_{\cL(\cH_{k_\cY}, L^2(\nu_\cY))}.
        \end{aligned}
        \end{equation}
        \item
        \textbf{Generic RBF error rates:}\label{item:thm_err_bounds_proj_unified_RBF}
        Let $\cX \subset \bR^d$ be a Lipschitz bounded domain and $k_{\cX}$ be a radial basis function kernel satisfying \Cref{ass:kernel_rv_pair}\ref{item:basic_assumptions_kernel} such that $k_{\cX} \in C^{2m_\cX}(\cX \times \cX)$, $m_\cX \in \bN_{0}$, with continuous derivatives on $\overline{\cX} \times \overline{\cX}$ of order $2m_\cX$.
        Then there exists $C > 0$ such that
        \begin{equation}
        \label{equ:one_sided_rbf_bound_unified}
            \norm{K - \widehat{K}}_{\cL(L^2(\nu_\cY),C_{b}(\cX))} \leq
            C \, \norm{\eta}_{L^2(\nu_{\cY};\cH)} \, \fd_{\bsX}^{m_\cX} \, .
        \end{equation}
        If, additionally, $\cY \subset \bR^d$ is Lipschitz bounded, $\nu_\cY \ll \bslambda$, and $k_\cY \in C^{2m_{\cY}}(\cY \times \cY)$ is a radial basis function kernel satisfying \Cref{ass:kernel_rv_pair}\ref{item:basic_assumptions_kernel} with continuous derivatives on $\overline{\cY} \times \overline{\cY}$ of order $2m_\cY$, $m_{\cY} \in \bN_{0}$, then there exist $C_\cX, C_\cY > 0$ such that 
        \begin{equation}\label{equ:two_sided_rbf_bound_unified}
        \begin{aligned}
            \norm{K - \widetilde{K}}_{\cL(\cH_{k_\cY},C_{b}(\cX))} &\leq C_{\cY} 
            \norm{\eta}_{L^2(\nu_\cY; \cH_{k_\cX})} \, \fd_{\bsX}^{m_\cX}
            + C_{\cX} \, \norm{\eta}_{L^2(\nu_{\cY} ; \cH_{k_\cX})} \, \fd_{\bsY}^{m_\cY}.
        \end{aligned}            
        \end{equation}
        In particular, this applies to Mat\'ern kernels $k_\star = k_\beta^\mathrm{Mat}$, $\beta > 0$, for which $m_\star = \lceil \beta \rceil - 1$, where $\star \in \{\cX, \cY\}$.
        \item \label{item:thm_err_bounds_proj_unified_wendland}
        \textbf{Improved Wendland error rates:}
        If $k_\star= k_{d,j}^\mathrm{Wen}$ is a Wendland kernel for some $\star \in \{\cX, \cY\}$, then, in the corresponding fill-distance terms in \eqref{equ:one_sided_rbf_bound_unified} or \eqref{equ:two_sided_rbf_bound_unified}, the exponent may be replaced by $m_\star = j + 1/2$.
    \end{enumerate}
\end{theorem}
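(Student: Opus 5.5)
The plan is to reduce all four parts to the Hilbert--Schmidt bound of \Cref{thm:CEO_bounds_various_spaces}\ref{item:CEO_H_HS}, namely $\norm{K}_{\cL(L^2(\nu_\cY),\cH)}\le\norm{\eta}_{L^2(\nu_\cY;\cH)}$. The remaining factor in each estimate will be an operator norm of $\Id-\cP$, which for RBF kernels is supplied by standard scattered-data interpolation estimates.

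\textbf{Part (a).} Since $\widehat K=\cP_{\bsX}K$, we have $K-\widehat K=(\Id-\cP_{\bsX})K$. This composition acts as
\[
L^2(\nu_\cY)\xrightarrow{\;K\;}\cH\xrightarrow{\;\Id-\cP_{\bsX}\;}\cH\hookrightarrow\cF ,
\]
so submultiplicativity gives $\norm{K-\widehat K}\le\norm{\Id-\cP_{\bsX}}_{\cL(\cH,\cF)}\norm{K}_{\cL(L^2(\nu_\cY),\cH)}$. Combining this with \Cref{thm:CEO_bounds_various_spaces}\ref{item:CEO_H_HS} yields (a).

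\textbf{Part (b).} We split
\[
K-\cP_{\bsX}K\cP_{\bsY}=(\Id-\cP_{\bsX})K+\cP_{\bsX}K(\Id-\cP_{\bsY}).
\]
For the first term we view $K$ on $\cH_{k_\cY}$ as $K\circ\iota$, where $\iota\colon\cH_{k_\cY}\hookrightarrow L^2(\nu_\cY)$. Part (a) then gives the bound $C_\cY\norm{\eta}\norm{\Id-\cP_{\bsX}}_{\cL(\cH_{k_\cX},\cF)}$ with $C_\cY=\norm{\iota}$. For the second term we use that $\cP_{\bsX}$ is an orthogonal projection, so $\norm{\cP_{\bsX}}\le1$ on $\cH_{k_\cX}$, and let $C_\cX$ be the norm of the embedding $\cH_{k_\cX}\hookrightarrow\cF$. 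Then
\[
\norm{\cP_{\bsX}K(\Id-\cP_{\bsY})g}_\cF\le C_\cX\norm{K}_{\cL(L^2(\nu_\cY),\cH_{k_\cX})}\norm{(\Id-\cP_{\bsY})g}_{L^2(\nu_\cY)},
\]
which is bounded by $C_\cX\norm{\eta}\,\norm{\Id-\cP_{\bsY}}_{\cL(\cH_{k_\cY},L^2(\nu_\cY))}\norm{g}_{\cH_{k_\cY}}$.

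\textbf{Part (c).} Take $\cF=C_b(\cX)$; the embedding $\cH_{k_\cX}\hookrightarrow C_b$ holds because $k_\cX$ is bounded on $\overline\cX$. For $h\in\cH_{k_\cX}$, the function $\cP_{\bsX}h$ is the kernel interpolant of $h$ on $\bsX$, so $h-\cP_{\bsX}h$ vanishes on $\bsX$. The standard power-function estimate for RBF kernels with $k\in C^{2m}$ \citep[Theorem~11.13]{Wendland2005Scattered} then gives
\[
\norm{h-\cP_{\bsX}h}_{\infty}\le C\,\fd_{\bsX}^{m_\cX}\norm{h}_{\cH_{k_\cX}}
\]
for sufficiently small fill distance. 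Larger fill distances are absorbed into the constant, using $\norm{\Id-\cP_{\bsX}}\le\norm{\text{embedding}}$ and boundedness of $\cX$. For the $\cY$-side we need $\norm{\Id-\cP_{\bsY}}_{\cL(\cH_{k_\cY},L^2(\nu_\cY))}\lesssim\fd_{\bsY}^{m_\cY}$. The same sup-norm estimate gives this provided $\nu_\cY$ is finite on bounded $\cY$, since then $\norm{\cdot}_{L^2(\nu_\cY)}\le\nu_\cY(\cY)^{1/2}\norm{\cdot}_\infty$. I expect this to be the main obstacle. The assumption $\nu_\cY\ll\bslambda$ alone does not guarantee finiteness, so I would try either to use the assumed embedding $\cH_{k_\cY}\hookrightarrow L^2(\nu_\cY)$ or to restrict to a finite density; if neither works, this step will require an additional hypothesis. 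For Mat\'ern kernels, \Cref{prop:matern_wendland_sobolev} gives $k\in C^{\lceil2\beta-1\rceil}$, and the largest $m$ with $2m\le\lceil2\beta-1\rceil$ is $m=\lceil\beta\rceil-1$.

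\textbf{Part (d).} For Wendland kernels we replace the generic power-function bound by the sharper Wendland-specific estimate. Since $\cH_{k}\simeq H^{\tau}$ with $\tau=j+\tfrac{d+1}{2}$, the Sobolev sampling inequality gives $L^\infty$ error $\fd^{\tau-d/2}=\fd^{j+1/2}$ \citep[Theorem~11.17]{Wendland2005Scattered}. This is inserted into the same argument in place of the generic rate.
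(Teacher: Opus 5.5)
Your treatment of (a), (b), (d) and the $\cX$-side of (c) follows the paper's proof. The ingredients are the same: the factorization $K-\widehat K=(\Id-\cP_{\bsX})K$; the splitting $K-\widetilde K=(\Id-\cP_{\bsX})K+\cP_{\bsX}K(\Id-\cP_{\bsY})$, with $C_\cY$ and $C_\cX$ the two embedding norms; and Wendland's Theorems~11.13 and~11.17 for the interpolation error. You also handle the restriction $\fd_{\bsX}\le h_0$ in Theorem~11.13 explicitly, which the paper skips.

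The step you flag on the $\cY$-side is a real issue, and the paper does not resolve it either. It asserts $\nu_\cY(\cY)^{1/2}\le C\,\bslambda(\cY)^{1/2}$ as a consequence of $\nu_\cY\ll\bslambda$. That inference does not follow from absolute continuity alone: a density $\norm{y-y^\ast}_2^{-d}$ near some $y^\ast\in\cY$ gives $\nu_\cY(\cY)=\infty$.

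Your first idea closes the gap, and shows that $\nu_\cY\ll\bslambda$ is not needed for this step. Write $k_\cY(y,y')=\psi(\norm{y-y'}_2)$. Then $\psi(0)>0$, since otherwise $k_\cY\equiv0$ by Cauchy--Schwarz, and $\psi$ is continuous near $0$. So there is $r>0$ with $|\psi(t)|\ge\psi(0)/2$ for $0\le t\le r$. Apply the embedding $\cH_{k_\cY}\hookrightarrow L^2(\nu_\cY)$, with norm $C_\cY$, to $\phi_\cY(y_0)$ for $y_0\in\cY$:
\[
\tfrac14\psi(0)^2\,\nu_\cY\big(\cY\cap B(y_0,r)\big)\le\int_\cY|\psi(\norm{y-y_0}_2)|^2\,\nu_\cY(\rd y)\le C_\cY^2\,\psi(0).
\]
The bounded set $\cY$ is covered by finitely many balls $B(y_i,r)$ with $y_i\in\cY$, so $\nu_\cY(\cY)<\infty$. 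Hence $\norm{(\Id-\cP_{\bsY})g}_{L^2(\nu_\cY)}\le\nu_\cY(\cY)^{1/2}\norm{(\Id-\cP_{\bsY})g}_\infty$. The sup-norm interpolation bound, generic or Wendland, then finishes the $\cY$-side of both (c) and (d), and your proof is complete.
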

\begin{proof}
    Under \Cref{assump:general_Radon_Nikodym_setting}, the operator $K$ is well-defined and bounded by \Cref{thm:CEO_bounds_various_spaces}\ref{item:CEO_H_HS}, and satisfies $\|K \|_{\cL(L^2(\nu_\cY),\cH)} \leq \norm{\eta}_{L^2(\nu_{\cY};\cH)} < \infty$. Factoring $K - \cP_{\bsX} K = (I - \cP_{\bsX}) K$ together with submultiplicativity of operator norms then immediately implies \ref{item:thm_err_bounds_proj_unified_general_one_sided}.

    If additionally the embedding $\cE_\cY \colon \cH_{k_\cY} \hookrightarrow L^2(\nu_\cY)$ is continuous, the restricted operator $K \in \cL (\cH_{k_\cY} , \cH_{k_\cX})$.
    We now decompose:
    \begin{align*}
    \|K - \widetilde K\|_{\cL(\cH_{k_\cY}, \cF)} &\leq
    \|(\Id-\cP_{\bsX}) K \|_{\cL(\cH_{k_\cY}, \cF)} +
    \|\cP_{\bsX} K (\Id-\cP_{\bsY})\|_{\cL(\cH_{k_\cY}, \cF)} \\
    &\leq \|\Id-\cP_{\bsX}\|_{\cL(\cH_{k_\cX}, \cF)} \, \|K\|_{\cL(L^2(\nu_\cY) , \cH_{k_\cX})} \, \| \cE_{\cY} \|_{\cL(\cH_{k_\cY}, L^2(\nu_\cY))} \\
    &\quad + \|\cP_{\bsX}\|_{ \cL ( \cH_{k_\cX} , \cF )}
    \,
    \|K \|_{\cL(L^2(\nu_\cY) , \cH_{k_\cX})} \, \|\Id-\cP_{\bsY} \|_{\cL(\cH_{k_\cY}, L^2(\nu_\cY))}.
    \end{align*}
    By contractivity of the orthogonal projection, $\norm{\cP_{\bsX}}_{\cL(\cH_{k_\cX}, \cH_{k_\cX})} \leq 1$ and continuity (hence, boundedness) of $\cH_{k_\cX} \hookrightarrow \cF$, there exists $C_\cX > 0$ such that $\|\cP_{\bsX}\|_{\cL(\cH_{k_\cX}, \cF)} \leq C_{\cX}$. Finally, continuity of the embedding $\cE_\cY$ yields $\norm{\cE_{\cY}}_{\cL(\cH_{k_\cY}, L^2(\nu_\cY))} \leq C_{\cY}$, proving \ref{item:thm_err_bounds_proj_unified_general_two_sided}.

    If $\cX \subset \bR^d$ is a Lipschitz bounded domain (and therefore satisfies the interior cone condition
    ; see \citet[][Paragraph 4.11]{Adams2003} and \citet[Appendix A]{KohnPhil24}), \citet[Theorem 11.13]{Wendland2005Scattered}
    and the subsequent discussion then imply the existence of $C > 0$ such that
    \begin{align*}
        \norm{f - \cP_{\bsX} f}_{\infty} \leq C \, \fd_{\bsX}^{m_\cX} \, \norm{f}_{\cH} \qquad \forall f \in \cH.
    \end{align*}
   Then 
   $\norm{\Id - \cP_{\bsX}}_{\cL(\cH,C_{b}(\cX)} = \sup_{\| f \|_\cH = 1 } \|f - P_{\bsX} f \|_\infty \leq C \, \fd_{\bsX}^{m_\cX}$ by definition, so \eqref{equ:one_sided_rbf_bound_unified} follows from \ref{item:thm_err_bounds_proj_unified_general_one_sided} with $\cF = C_b(\cX)$.
    
    If additionally $\cY$ is Lipschitz bounded and $\nu_\cY \ll \bslambda$,  we have $\|f\|_{L^2(\nu_\cY)} \leq \nu_\cY(\cY)^{1/2}\|f\|_\infty \leq C \bslambda(\cY)^{1/2} \norm{f}_\infty$, and therefore $\norm{\Id-\cP_{\bsY}}_{\cL(\cH_{k_\cY}, L^2(\nu_\cY))} \leq \widetilde{C}  \norm{\Id-\cP_{\bsY}}_{\cL(\cH_{k_\cY}, C_b(\cY))}$ with $\widetilde{C} > 0$. If $\cH_{k_\cY}$ is induced by a radial basis function kernel $k_\cY \in C^{2 m_\cY}(\cY \times \cY)$, then \eqref{equ:two_sided_rbf_bound_unified} follows analogously as \eqref{equ:one_sided_rbf_bound_unified}. 

    In particular, by \Cref{prop:matern_wendland_sobolev}, the Mat\'ern kernel $k_\beta^\mathrm{Mat}$ is $\lceil 2 \beta - 1 \rceil$-times continuously differentiable. Furthermore, we have $\lceil 2 \beta - 1 \rceil \geq 2 \lceil \beta - 1 \rceil = 2 (\lceil \beta \rceil - 1)$, as can be seen the following way. For $\beta \in \bN$, it holds that $2 \lceil \beta - 1 \rceil = 2(\beta - 1) < 2 \beta - 1 = \lceil 2 \beta - 1 \rceil$. For $\beta = l + \delta$ with $l \in \bN$, $\delta \in (0,1)$, we have $2 \lceil \beta - 1 \rceil = 2l$ and $\ 2 \beta - 1 \ = \ 2l - 2 \delta + 1 \in (2l - 1, 2l + 1)$, so $\lceil 2\beta - 1\rceil \geq 2l = 2 \lceil \beta - 1 \rceil$. Hence, we have $k_\beta^\mathrm{Mat} \in C^{2m}(\bR^d \times \bR^d)$ for $m = \lceil \beta - 1 \rceil = \lceil \beta \rceil - 1$.     
    Consequently, its derivatives of order $2 m$ are also continuous on $\overline{\cX} \times \overline{\cX}$ and, in the case of Lipschitz bounded $\cY \subset \bR^d$, on $\overline{\cY} \times \overline{\cY}$. Thus, \eqref{equ:one_sided_rbf_bound_unified} and \eqref{equ:two_sided_rbf_bound_unified} hold with $m_\star = \lceil \beta \rceil - 1$, concluding the proof of \ref{item:thm_err_bounds_proj_unified_RBF}. 

    Finally, for Wendland RKHS $k_\star = k_{d,j}^\mathrm{Wen}$, $j \in \bN$, $\star \in \{\cX, \cY\}$, \citet[Theorem 11.17]{Wendland2005Scattered} yields the sharper projection error bound $\norm{\Id - \cP_{\bsX}}_{\cL(\cH_{k_{\cX}},C_{b}(\cX))}\leq C \, \fd_{\bsX}^{j + 1/2}$, which together with \ref{item:thm_err_bounds_proj_unified_general_one_sided} implies \eqref{equ:one_sided_rbf_bound_unified} with convergence rate $m_\star = j + 1/2$. By the same argument as above, under the according assumptions 
    this also implies $\norm{\Id - \cP_{\bsY}}_{\cL(\cH_{k_{\cY}},L^2(\nu_\cY))}\leq C \, \fd_{\bsY}^{j + 1/2}$, such that \eqref{equ:two_sided_rbf_bound_unified} follows analogously as \ref{item:thm_err_bounds_proj_unified_general_two_sided} with convergence rate $m_\star = j + 1/2$, proving \ref{item:thm_err_bounds_proj_unified_wendland}.
\end{proof}

\begin{remark}[Projection as interpolation]\label{rem:rkhs_proj_interpol}
    The projection error bounds in \Cref{thm:err_bounds_proj_unified}\ref{item:thm_err_bounds_proj_unified_RBF} rely on the reproducing kernel Hilbert space structure. Indeed, the orthogonal projection $\cP_{\bsX}$ onto the span of kernel sections coincides with the kernel interpolant at the sampling points $\bsX$ \citep[see, e.g.,][]{KohnPhil24}. Consequently, the estimates reduce to classical interpolation error bounds from scattered data approximation \citep{Wendland2005Scattered}. The use of kernel feature dictionaries in kernel EDMD approximations \citep{WillRowl15, BoldPhilSchaWort2025:kernel_koopman_flexible_sampling} thereby circumvents the ``finite-data error'' of EDMD in arbitrary Hilbert spaces, where the inner products in the solution of the regression problem have to be approximated empirically.
\end{remark}

\begin{remark}[Optimal choice of regularization order]\label{rem:optimal_choice_regularity}
    Note that the ``range projection'' error rate in \eqref{equ:one_sided_rbf_bound_unified} and \eqref{equ:two_sided_rbf_bound_unified} improves with increasing kernel regularity, for example in the case of Sobolev RKHS in \Cref{thm:err_bounds_proj_unified}\ref{item:thm_err_bounds_proj_unified_wendland}.
    However, since in this case the upper bound $\norm{\eta}_{L^2(\nu_{\cY};\cH)}$ typically grows\footnote{Generally, the growth of the operator norm with $\ell$ is expected to be faster than the fill distance rate can compensate for.
    Not only does the number of derivative terms entering the Sobolev norm equal $\binom{\ell + d}{d}$, but higher-order derivatives with respect to $x$ also might produce larger integrals over the bounded domain $\cX$.}
    with regularization order $m_\cX$, the `optimal choice of smoothness' to achieve the tightest error bound depends on the specific transition density $\eta(x,y)$ and its partial derivatives $\partial_x^\alpha \eta(x,y)$ with respect to the conditioning variable $x$. 
\end{remark}

\begin{remark}[One-sided vs.\ two-sided projection schemes]
\label{rem:discussion_one_two_sided_proj}

Classical kernel EDMD
\citep{ZhanZuaz23,BoldPhilSchaWort2025:kernel_koopman_flexible_sampling,
PhilScha24:kernel:ACHA,PhilScha25:kernel:JNLS}
typically employs the two-sided approximation $\widetilde K=\cP_{\bsX}K\cP_{\bsY}$,
usually under the mapping assumption $K\cH_{k_\cY}\subset\cH_{k_\cX}$.\footnote{Often, the case $\cX = \cY$ with identical kernels $k_\cX = k_\cY$ is considered, such that the RKHS $\cH_{k_\cX}$ is assumed \citep{Klus2020Eigendecompositions, Hou2026:SparseOnlineKoopmanLearning} or proven \citep{KohnPhil24, hertel2025koopmanstochasticdynamicserror} to be \emph{invariant} under the Koopman operator $K$.} 
In contrast, the one-sided Galerkin approximation $\widehat K=\cP_{\bsX}K$ acts on the full input space $L^2(\nu_\cY)$ and projects only the image of the operator.

The bounds \eqref{equ:one_sided_rbf_bound_unified} and
\eqref{equ:two_sided_rbf_bound_unified} reveal a fundamental trade-off.
Since the one-sided approximation does not restrict the domain of $K$, it fully exploits the regularizing effect of conditional expectation and incurs only the image-space projection error. In the RBF setting, this error is governed by the fill distance in the (typically much smoother) RKHS $\cH_{k_\cX}$.

The two-sided approximation, on the other hand, introduces an additional domain projection error through the embedding $\cH_{k_\cY}\hookrightarrow L^2(\nu_\cY)$, so that the overall convergence is limited by the slower of the domain and image approximation rates. This additional approximation error is compensated by a substantial computational advantage: the action of $K$ is represented by a single finite-dimensional propagation matrix $\bsK_{\bsX \bsY}$, which can be assembled once and subsequently applied to arbitrary observables. In contrast, the one-sided approximation requires the conditional expectation $(Kg)(x_i)=\bE[g(Y) \, | \, X=x_i]$ to be estimated separately for each observable $g$.
\end{remark}

\subsubsection{Error Bounds for CME-based Approximation}
\label{subsubsec:cme_approx}

By the choice of data points $\bsX$ and $\bsY$, the projection-based approximants $\widehat{K}$ and $\widetilde{K}$ impose deterministic, fill-distance-type error rates. In turn, approximations based on the CME representation $Kg = \bE[g(Y)  \, | \, X = \quark] = C_{XX}^\dagger C_{XY} g$, cf.\ \eqref{eq:uncentred_cme_formula_g}, impose the RKHS setting $K: \cH_{k_\cY} \to \cH_{k_\cX}$, but approximate $K$ directly from sampled data pairs $(x_i,y_i)$ rather than through explicit conditional sampling. This regression-type interpretation, and the resulting empirical estimator $\hat{K}_\varepsilon$, were first developed by \citet{Grunewalder2012CMEregressors} and have since been studied extensively as a nonparametric approximation problem for CEOs \citep{mollenhauer2023nonparametric}.

By \Cref{cor:regularity_conditional_expectation_operator_CME}, the CME representation need not simply be assumed but follows from verifiable conditions, so our abstract results also yield error bounds for CME estimators. Convergence rates for such CME-type regularized regression range from the foundational vector-valued rates of \citet{Caponnetto2007},
through the operator-free consistency rate of \citet{Park2020KCME} and the general-regularization-scheme rates of \citet{MollenhauerMueckeSullivan2022}, to the sharpest, source-condition-adaptive rates of \citet{Li2024Optimal}, which also cover the misspecified case.
The following result is a direct consequence of \citet[Theorem~3]{Li2024Optimal}
with $\alpha = \beta = 1$ and $\gamma = 0$, combined with
\Cref{cor:regularity_conditional_expectation_operator_CME}, which provides the well-specifiedness of the CME in our setting.

\begin{theorem}[Convergence rate for CME approximation]
\label{prop:cme_learning_rate}
Let \Cref{assump:general_Radon_Nikodym_setting,assump:two_RKHS_setting}
hold with $\cH = \cH_{k_\cX}$ and $\cH_{k_\cY} \hookrightarrow L^2(\nu_\cY)$,
and assume that $k_\cX$ is bounded, i.e., $\sup_{x \in \cX} k_\cX(x,x) < \infty$.
Let $(\lambda_j)_{j \in \bN}$ denote the non-increasing eigenvalues of $C_{XX}$,
and suppose there exist $c > 0$ and $p \in (0,1]$ such that
$\lambda_j \leq c\, j^{-1/p}$ for all $j \in \bN$.
Then the regularized CME estimator
$\hat{K}_{\varepsilon_{n}} \coloneqq (\widehat{C}_{XX} + \varepsilon_{n}\,\Id)^{-1}\widehat{C}_{XY}$
based on $n$ i.i.d.\ pairs $(x_i,y_i) \sim P_{XY}$
with $\varepsilon_n \asymp n^{-1/(1+p)}$ satisfies,
for all $\tau \geq \log(5)$,
\begin{equation}
\label{equ:CME_approximation_rate}
\norm{\hat{K}_{\varepsilon_n} - K}_{\mathsf{HS}(\cH_{k_\cY},\,L^2(P_X))}^2
\leq
\tau^2 A\,
n^{-\frac{1}{1+p}}
\end{equation}
with $P_{XY}^n$-probability at least $1 - 5 e^{-\tau}$,
for some constant $A > 0$ independent of $\tau$
and all sufficiently large $n$.
In particular, the assumption $\lambda_j \leq c\, j^{-1/p}$ always holds with
$p = 1$ and rate $n^{-1/2}$.
If moreover $\cH_{k_\cX} \simeq H^\ell(\cX)$
on a bounded Lipschitz domain $\cX \subset \bR^d$ with $\ell > d/2$
(e.g.\ for Mat\'ern or Wendland kernels, cf.\ \Cref{prop:matern_wendland_sobolev}),
then $p = d/(2\ell) < 1$ and the rate improves to $n^{-2\ell/(2\ell+d)}$.
\end{theorem}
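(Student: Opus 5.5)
The plan is to reduce the statement to \citet[Theorem~3]{Li2024Optimal}, applied with $\alpha=\beta=1$ and $\gamma=0$, and to check its hypotheses one by one from the assumptions of the theorem.

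\textbf{Step 1: well-specifiedness.} The key hypothesis in Li et al.\ is that the target CEO lies in the well-specified class, namely $K\in\HS(\cH_{k_\cY},\cH_{k_\cX})$. This is the source condition with $\beta=1$, and the norm is measured in $L^2(P_X)$, which corresponds to $\gamma=0$. \Cref{cor:regularity_conditional_expectation_operator_CME} gives exactly this. Under \Cref{assump:general_Radon_Nikodym_setting} with $\cH=\cH_{k_\cX}$ and $\cH_{k_\cY}\hookrightarrow L^2(\nu_\cY)$, we obtain $K\in\HS(\cH_{k_\cY},\cH_{k_\cX})$ with $\norm{K}_{\HS}\le\norm{\cE_\cY}\,\norm{\eta}_{L^2(\nu_\cY;\cH)}$. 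The same corollary also gives $K=C_{XX}^\dagger C_{XY}$, so the population target of the regularized estimator is indeed $K$.

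\textbf{Step 2: remaining hypotheses.} The embedding property with $\alpha=1$ requires a bounded kernel. This holds by $\sup_x k_\cX(x,x)<\infty$. Boundedness of $k_\cY$ is not needed for the Hilbert--Schmidt-valued response beyond \Cref{ass:kernel_rv_pair}\ref{item:basic_assumptions_feature_map_L2}. If Li et al.\ require a moment or Bernstein condition on $\phi_\cY(Y)$, I would verify it from the square-integrability of the feature map. If necessary, I would also use boundedness of $k_\cY$, which holds for the translation-invariant kernels covered by \Cref{prop:rkhs_in_L2_via_spectral_density}. This moment verification is the step I expect to be the main obstacle, since the noise conditions in Li et al.\ must be matched exactly. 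The eigenvalue decay $\lambda_j\le c j^{-1/p}$ is assumed directly. With $\alpha=\beta=1$, the rate in Li et al.\ becomes $n^{-\beta/(\beta+p)}=n^{-1/(1+p)}$ with $\varepsilon_n\asymp n^{-1/(1+p)}$. The high-probability bound $1-5e^{-\tau}$ with factor $\tau^2$ is then inherited verbatim.

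\textbf{Step 3: the two special cases.} For $p=1$, note that $C_{XX}$ is trace class with $\operatorname{tr}C_{XX}=\bE[k_\cX(X,X)]<\infty$. Since the $\lambda_j$ are non-increasing, $j\lambda_j\le\sum_{i\le j}\lambda_i\le\operatorname{tr}C_{XX}$, so $\lambda_j\le c\,j^{-1}$.

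For the Sobolev case, $C_{XX}$ has the same nonzero spectrum as $\cE\cE^*$, where $\cE\colon\cH_{k_\cX}\hookrightarrow L^2(P_X)$. Thus $\lambda_j=s_j(\cE)^2$. I would bound $s_j(\cE)$ by factoring $\cE$ as $H^\ell(\cX)\hookrightarrow L^2(\bslambda)\to L^2(P_X)$. This requires $P_X$ to have a bounded Lebesgue density, which I would need to add or argue. An alternative is to factor through $H^\ell(\cX)\hookrightarrow C_b(\cX)\to L^2(P_X)$ and use entropy or approximation numbers of Sobolev embeddings, which gives $a_j\lesssim j^{-\ell/d+1/2}$ into $L^\infty$. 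The classical singular-value asymptotics $s_j(H^\ell\hookrightarrow L^2)\asymp j^{-\ell/d}$ on Lipschitz domains (Birman--Solomyak) give $\lambda_j\lesssim j^{-2\ell/d}$, that is, $p=d/(2\ell)<1$ because $\ell>d/2$. The rate is then $n^{-1/(1+d/(2\ell))}=n^{-2\ell/(2\ell+d)}$.
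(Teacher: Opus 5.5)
Your reduction is the paper's own. You get well-specifiedness ($\beta=1$) from \Cref{cor:regularity_conditional_expectation_operator_CME} and (EMB) with $\alpha=1$ from $\sup_x k_\cX(x,x)<\infty$, then apply Case $\beta+p>\alpha$ of \citet[Theorem~3]{Li2024Optimal} with $\gamma=0$. You handle $p=1$ by the trace-class argument; your $j\lambda_j\le\operatorname{tr}C_{XX}$ is a cleaner version of the paper's $O(j^{-1})$ claim. The Sobolev case uses singular numbers of the embedding. The paper also invokes \citet[Section~3]{mollenhauer2023nonparametric} to identify $\hat K_{\varepsilon}^*$ with Li et al.'s vector-valued ridge estimator, which you use implicitly. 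However, the two steps you leave open are genuine gaps, and the paper's proof passes over both.

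\textbf{Sobolev eigenvalue decay.} The paper writes $C_{XX}=\cE^*\cE$ with $\cE\colon\cH_{k_\cX}\hookrightarrow L^2(P_X)$. It then quotes $a_j(H^\ell(\cX)\hookrightarrow L^2(\cX))\asymp j^{-\ell/d}$ for \emph{Lebesgue} measure. This tacitly makes the bounded-density assumption you noticed; its two-sided $\asymp$ would even need a density bounded below.

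Your fallback through $C_b(\cX)$ with approximation numbers fails. The rate $a_j\asymp j^{1/2-\ell/d}$ only yields $\lambda_j\lesssim j^{1-2\ell/d}$, i.e.\ $p=d/(2\ell-d)$, which is not below $1$ unless $\ell>d$.

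The missing idea is to use entropy numbers, which do not lose the $1/2$:
\begin{itemize}
\item $e_j(H^\ell(\cX)\hookrightarrow L^\infty(\cX))\lesssim j^{-\ell/d}$ for $\ell>d/2$ \citep{EdmundsTriebel1996:function_spaces_entropy}. Since $\norm{f}_{L^2(P_X)}\le\norm{f}_\infty$, this gives $e_j(\cE)\lesssim j^{-\ell/d}$ for \emph{every} $P_X$.
\item Because $\cE$ acts between Hilbert spaces, write $\cE=U\absval{\cE}$ (polar decomposition). Then $\absval{\cE}=U^*\cE$, so $e_j(\absval{\cE})\le e_j(\cE)$.
\item Carl's inequality $\absval{\lambda_n(T)}\le\sqrt2\,e_n(T)$ and multiplicativity $e_{2j-1}(ST)\le e_j(S)e_j(T)$ give $s_{2j-1}(\cE)^2=\lambda_{2j-1}(\absval{\cE}^2)\le\sqrt2\,e_{2j-1}(\absval{\cE}^2)\le\sqrt2\,e_j(\cE)^2$.
\end{itemize}
Hence $\lambda_j(C_{XX})\lesssim j^{-2\ell/d}$ for arbitrary $P_X$, and only this upper bound is needed for (EVD).

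\textbf{Noise moment condition.} The upper bound of \citet{Li2024Optimal} rests on their moment condition (MOM). This is a Bernstein bound $\bE[\norm{\phi_\cY(Y)-\mu_{Y\mid X=x}}_{\cH_{k_\cY}}^q\mid X=x]\le\tfrac12 q!\,\sigma^2R^{q-2}$ for $P_X$-a.e.\ $x$ and all $q\ge2$, and it drives the concentration behind the $1-5e^{-\tau}$ bound.

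It does not follow from square-integrability (\Cref{ass:kernel_rv_pair}\ref{item:basic_assumptions_feature_map_L2}), so your first suggestion fails. Any $k_\cY$ whose diagonal $k_\cY(Y,Y)$ has only finitely many moments violates (MOM) on a set of positive $P_X$-measure. Nothing in the stated hypotheses rules this out; for instance, with $\nu_\cY=P_Y$ the embedding $\cH_{k_\cY}\hookrightarrow L^2(\nu_\cY)$ holds for every admissible $k_\cY$.

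If $\kappa^2\coloneqq\sup_y k_\cY(y,y)<\infty$, the noise is bounded by $2\kappa$ and (MOM) holds with $\sigma=R=2\kappa$. Neither the statement nor the paper's proof imposes or checks this. So boundedness of $k_\cY$ (or a Bernstein condition) must be added as a hypothesis, as in your fallback, rather than derived.
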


\begin{proof}
In the notation of \citet{Li2024Optimal},
we verify the assumptions of their Theorem~2 as follows.
\Cref{cor:regularity_conditional_expectation_operator_CME} yields
$K \in \HS(\cH_{k_\cY},\cH_{k_\cX})$,
so $K^* \in \HS(\cH_{k_\cX},\cH_{k_\cY})$,
which is the well-specified case of \citet{Li2024Optimal}
with output space $Y = \cH_{k_\cY}$, covariate RKHS $H_X = \cH_{k_\cX}$,
and regression function $F^* = K^* \circ \, \phi_{\cX} \colon \cX \to \cH_{k_\cY}$,
giving (SRC) with $\beta = 1$.
Boundedness of $k_\cX$ gives (EMB) at level $\alpha = 1$
since $[\cH_{k_\cX}]^1 \simeq \cH_{k_\cX} \hookrightarrow L^\infty(P_X)$
with constant $A = \sup_{x \in \cX} k_\cX(x,x)^{1/2}$.
The eigenvalue decay assumption is (EVD) with the given $p$.
Since $\beta + p = 1 + p > 1 = \alpha$ for all $p \in (0,1]$,
we are always in Case~2 of \citet[Theorem~3]{Li2024Optimal} with $\gamma = 0$,
yielding
\[
\|\hat{K}_{\varepsilon_n}^* - K^*\|_{\mathsf{HS}(L^2(P_X),\,\cH_{k_\cY})}^2
\leq \tau^2 A\, n^{-1/(1+p)}.
\]
Since the Hilbert--Schmidt norm is invariant under taking adjoints, we obtain the stated bound.
The regularized CME estimator $\hat{K}_{\varepsilon}^*$ coincides with
the estimator of \citet{Li2024Optimal}
as shown in \citet[Section~3]{mollenhauer2023nonparametric};
see also the proof of \citet[Theorem~3]{Li2024Optimal}.
Since $\norm{C_{XX}}_{\mathsf{Tr}} = \bE[k_\cX(X,X)] < \infty$
by \Cref{ass:kernel_rv_pair}, $C_{XX}$ is trace class on $\cH_{k_\cX}$,
implying $\lambda_j = O(j^{-1})$, i.e., (EVD) with $p = 1$
and rate $n^{-1/2}$.
For $\cH_{k_\cX} \simeq H^\ell(\cX)$
on a bounded Lipschitz domain $\cX \subset \bR^d$ with $\ell > d/2$,
note that $C_{XX} = (\cE^{L^2(P_X)})^* \, \cE^{L^2(P_X)} \colon \cH_{k_\cX} \to \cH_{k_\cX}$
\citep[Section~2]{Li2024Optimal},
so the eigenvalues of $C_{XX}$ are the squared singular values of the embedding
$\cH_{k_\cX} \hookrightarrow L^2(P_X)$.
By norm-equivalence $\cH_{k_\cX} \simeq H^\ell(\cX)$ and the classical decay
$a_j(\cE^{L^2(\cX)}) \asymp j^{-\ell/d}$ of the approximation numbers
of the Sobolev embedding $\cE^{L^2(\cX)} \colon H^\ell(\cX) \hookrightarrow L^2(\cX)$
\citep{EdmundsTriebel1996:function_spaces_entropy},
we obtain $\lambda_j(C_{XX}) \asymp j^{-2\ell/d}$,
giving $p = d/(2\ell)$ and rate $n^{-2\ell/(2\ell+d)}$.
\end{proof}

\begin{remark}[Eigenvalue decay on unbounded domains]
\label{rem:evd_unbounded}
On unbounded domains such as $\cX = \bR^d$, the Sobolev embedding
$H^\ell(\bR^d) \hookrightarrow L^2(\bR^d)$ is not compact, so the
sharp rate $p = d/(2\ell)$ of \Cref{prop:cme_learning_rate} is not
available in general, and $p = 1$ is the best guaranteed by trace class alone.
For $\cH_{k_\cX} \simeq H^\ell(\bR^d)$ and $P_X$ with
sufficiently fast-decaying tails, a sharper rate $p < 1$ may be recoverable
via weighted Sobolev embedding theory
\citep{EdmundsTriebel1996:function_spaces_entropy},
but this falls outside the scope of the present work.
\end{remark}

\begin{remark}[Strong norm bounds from extra smoothness]
If the transition density satisfies $\eta \in L^2(\nu_\cY; H^s(\cX))$
for some $s > \ell$,
then applying \citet[Theorem~3]{Li2024Optimal} with $\gamma = 1$
yields a bound in the stronger norm $\mathsf{HS}(\cH_{k_\cY},\,\cH_{k_\cX})$ of the form
\[
\|\hat{K}_{\varepsilon_n} - K\|_{\mathsf{HS}(\cH_{k_\cY},\,\cH_{k_\cX})}^2
\leq
\tau^2 A\, n^{-\frac{s-\ell}{s+d/2}}
\]
with high probability.
In particular, stronger smoothness of the conditional law ($s$ larger)
gives a faster rate, reflecting the regularizing effect of the transition kernel.
\end{remark}

\begin{remark}[CME convergence for trajectory data]
\label{rem:cme_trajectory}
\Cref{prop:cme_learning_rate} assumes i.i.d.\ data pairs
$(x_i, y_i) \sim P_{XY}$.
In applications where data arise as a single trajectory of a stochastic
process, the same rate is expected to hold up to logarithmic corrections,
provided the process is $\alpha$-mixing with summable coefficients
$\sum_{t=1}^{\infty} \alpha(t) < \infty$ and the kernel $k_X$ is bounded.
This condition is satisfied, for instance, for geometrically ergodic
Markov processes (with $\alpha(t) = O(e^{-ct})$), including solutions
to SDEs satisfying common assumptions (cf.\ \Cref{ass:sde_unif_elliptic_regular} in \Cref{sec:koopman_sde_class}).
A rigorous proof would replace the Bernstein inequality for i.i.d.\
variables used in the variance bound of \cite{Li2024Optimal} with the
analogous concentration inequality for $\alpha$-mixing processes from
\cite[Theorem~21]{Mollenhauer2022kernel_autocovariance}, and is left for future work.
\end{remark}


\section{\texorpdfstring{Verification of the Density Regularity Condition $\eta \in L^2(\nu_{\cY};\cH)$}{Verification of the Density Regularity Condition}}

\label{sec:verification_density_regularity}

In the previous section, we established that the condition $\eta \in L^2(\nu_{\cY};\cH)$ implies a range of regularity, representation, and approximation results for conditional expectation operators. The purpose of the upcoming section is to show that \Cref{assump:general_Radon_Nikodym_setting} can be verified in several important problem classes. While the underlying models differ substantially, we exploit problem-specific regularity of the conditional density in every outlined application to establish the required Hilbert space regularity.
The abstract results of \Cref{sec:consequences_density_regularity} apply in each of these settings, once the standing assumptions have been verified. As the applications below address different communities with different interests, we restate only a selection of these results in each case.

\begin{remark}[CME rates for bounded vs.\ unbounded conditioning domains]
\label{rem:rate_bounded_vs_unbounded}
Whenever $k_\cX$ is bounded, i.e., $\sup_{x\in\cX} k_\cX(x,x)<\infty$ (in particular, whenever $\cX$ is bounded and $k_\cX$ continuous), the CME convergence rate of \Cref{prop:cme_learning_rate} applies directly, since it only requires this boundedness together with the eigenvalue decay of $C_{XX}$. For unbounded $\cX = \bR^d$, we instead work with the exponentially reweighted kernel $k_\tau$ from \Cref{lemma:reweighted_kernel}, whose reweighting compensates for the lack of spatial integrability but whose diagonal is necessarily unbounded.
To the best of our knowledge, none of the existing convergence-rate results for CME-type or vector-valued ridge regression estimators \citep{Caponnetto2007,Park2020KCME,MollenhauerMueckeSullivan2022,Li2024Optimal} cover kernels with unbounded diagonal; we also state only qualitative Hilbert--Schmidt and CME-representability results below whenever $\cX$ is unbounded, and reserve quantitative rates for the case of bounded $\cX$.
Establishing such a rate for the reweighted setting appears genuinely challenging and we leave this as an interesting direction for future work.
\end{remark}

\subsection{Nonparametric Regression (Forward Uncertainty)}
\label{sec:regression}

In statistical learning theory and nonparametric statistics,
a canonical object of study is the regression model
\begin{equation}
	\label{equ:regression_setting}
	Y = f(X) + \varepsilon,
\end{equation}
where $X \colon \Omega \to \cX$ is the covariate, $Y \colon \Omega \to \bR$
the response, $f \colon \cX \to \bR$ the unknown signal,
and $\varepsilon$ a centred noise variable independent of $X$.
The classical goal is to estimate the regression function
$f^*(x) = \bE[Y  \, | \, X=x] = f(x)$ from i.i.d.\ observations
$(x_i, y_i) \sim P_{XY}$.
Beyond point estimation of $f^*$, the CME $\mu_{Y \mid X=x} \in \cH_{k_\cY}$
provides a unified representation of $P_{Y \mid X=x}$
from which $\bE[g(Y) \,|\, X=x]$ can be recovered for any $g \in \cH_{k_\cY}$ via the reproducing property \eqref{equ:KME_reproducing_property},
for instance for (forward) uncertainty quantification
\citep{Gneiting2007Probabilistic}.

The following theorem verifies the density regularity condition
$\eta \in L^2(\nu_{\cY}; \cH_{k_\cX})$ (\Cref{assump:general_Radon_Nikodym_setting}) in the regression setting \eqref{equ:regression_setting} under the \emph{source condition} $f \in H^\ell(\cX)$ \citep{Caponnetto2007, Fischer2020sobolev},
and the subsequent corollary establishes the validity of the CME representation $K = C_{XX}^\dagger C_{XY}$
and the optimal estimation rate of \Cref{prop:cme_learning_rate}.

\begin{theorem}[Sobolev regularity criterion for nonparametric regression]
	\label{thm:kRR_well_specified_homoskedastic}
	Let $(\cY,\nu_\cY) = (\bR,\bslambda)$, $\cX \subseteq \bR^{d}$ and consider the regression model \eqref{equ:regression_setting},
	where $f \in H^\ell(\cX)$ for some $\ell \in \bN$ with $\ell > d/2$ and $\varepsilon$ is independent of $X$ and admits a zero-mean Lebesgue density $p_\varepsilon \in W^{\ell,2}(\bR)$.\footnote{This covers Gaussian noise
	$\varepsilon \sim \cN(0,\sigma^2)$, $\sigma > 0$, for every $\ell \in \bN$.}
    Then the Radon--Nikodym derivative
    $\eta(x,y) \coloneqq \frac{\rd P_{Y\mid X=x}}{\rd \nu_{\cY}}(y)$
    exists and satisfies
    $\eta(x,y) = p_\varepsilon\big(y - f(x)\big)$.
    Further,
    \begin{equation}
    \label{equ:conditional_density_regularity_regression}
    \eta \in L^2(\nu_\cY;\cH),
    \qquad
	K \in \HS(L^2(\nu_\cY),\cH)
    \quad
    \text{with}
    \quad    
	\norm{K}_{\HS(L^2(\nu_\cY),\cH)} = \norm{\eta}_{L^2(\nu_\cY;\cH)},
    \end{equation}    
    if either of the following holds:
	\begin{enumerate}[label=(\Alph*)]
	\item
	\label{item:kRR_homosked_bounded}
	\textbf{bounded domain, Sobolev RKHS:}
	$\cX \subset \bR^d$ is a bounded Lipschitz domain and
	$\cH \simeq H^\ell(\cX)$.
    In this case we additionally have $K \in \Tr( L^2(\nu_{\cY}) , L^2(\nu_{\cX}) )$ for $\nu_{\cX} = \bslambda$;
	\item
	\label{item:kRR_homosked_Rd}
	\textbf{unbounded domain, reweighted Sobolev RKHS:}
	$\cX = \bR^d$, $k_0$ is a kernel on
	$\bR^d$ with $\cH_{k_0} \simeq H^\ell(\bR^d)$, and $\cH = \cH_{k_{\cX}}$ where $k_{\cX} = k_\tau$ is the Gaussian-reweighted kernel \eqref{equation:Gaussian_reweighted_kernel} for $\tau > 0$ and symmetric, strictly positive definite $\Gamma \in \bR^{d\times d}$.\footnote{The
	reweighting cannot be omitted: since
	$\int_\bR \absval{\eta(x,y)}^2\,\rd y
	= \norm{p_\varepsilon}_{L^2(\bR)}^2 > 0$ for every $x \in \bR^d$ by
	translation invariance, one has
	$\eta \notin L^2(\nu_\cY;H^\ell(\bR^d))$ already for $\ell = 0$,
	regardless of $f$.\label{footnote:reweighting_necessary}}
	\end{enumerate}
\end{theorem}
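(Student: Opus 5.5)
The density formula comes first. Because $\varepsilon$ is independent of $X$, for every $x$ the conditional law $P_{Y\mid X=x}$ is the law of $f(x)+\varepsilon$. Its Lebesgue density is therefore $y\mapsto p_\varepsilon(y-f(x))$. This function is jointly measurable in $(x,y)$, since $f$ and $p_\varepsilon$ are measurable.

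Once $\eta\in L^2(\nu_\cY;\cH)$ is established, the Hilbert--Schmidt claim with the norm identity follows directly from \Cref{thm:CEO_bounds_various_spaces}\ref{item:CEO_H_HS}. In case \ref{item:kRR_homosked_bounded}, the trace-class claim follows from \Cref{thm:CEO_bounds_various_spaces}\ref{item:CEO_L2_Tr}, because $\ell>d/2$. Bochner measurability of $y\mapsto\eta(\quark,y)\in\cH$ reduces by Pettis' theorem to weak measurability, since $\cH$ is separable. Weak measurability follows from joint measurability by testing against point evaluations, which span a dense set in an RKHS. It remains to bound $\int_\bR\norm{p_\varepsilon(y-f(\quark))}_{\cH}^2\,\rd y$.

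For case \ref{item:kRR_homosked_bounded}, I would use $\cH\simeq H^\ell(\cX)=W^{\ell,2}(\cX)$ and bound $\sum_{|\alpha|\le\ell}\int_\bR\int_\cX|\partial_x^\alpha[p_\varepsilon(y-f(x))]|^2\,\rd x\,\rd y$. The multivariate chain rule (Faà di Bruno) writes $\partial_x^\alpha\, p_\varepsilon(y-f(x))$ as a sum of terms $p_\varepsilon^{(j)}(y-f(x))\prod_{i=1}^j\partial^{\beta_i}f(x)$, where $1\le j\le|\alpha|$ and $\sum_i\beta_i=\alpha$. After applying Tonelli and substituting $u=y-f(x)$ in the inner $y$-integral, each term is bounded by $\norm{p_\varepsilon^{(j)}}_{L^2(\bR)}^2\int_\cX\prod_i|\partial^{\beta_i}f(x)|^2\,\rd x$. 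The $y$-integral thus factors out completely. This translation invariance in $y$ is the key point.

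The main obstacle is controlling the products $\prod_i\partial^{\beta_i}f$ in $L^2(\cX)$ using only $f\in H^\ell$. I would apply the Gagliardo--Nirenberg/Moser product estimate. Since $\ell>d/2$, the space $H^\ell(\cX)$ is an algebra and embeds into $L^\infty$. By Hölder's inequality together with the Sobolev embeddings $H^{\ell-|\beta_i|}\hookrightarrow L^{p_i}$, with exponents $p_i$ chosen so that $\sum_i 1/p_i=1/2$, this gives $\norm{\prod_i\partial^{\beta_i}f}_{L^2}\lesssim\norm{f}_{H^\ell}^{j}$. The extension to Lipschitz domains goes through a Stein extension operator. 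One subtlety is that the $\ell=0$ term is simply $\bslambda(\cX)\norm{p_\varepsilon}_{L^2}^2$. Another is that the chain rule must be justified for Sobolev $f$ composed with $p_\varepsilon\in W^{\ell,2}$. I would handle this by approximating $f$ with smooth functions and $p_\varepsilon$ with mollified densities, then passing to the limit via the bounds just derived together with Fatou's lemma.

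For case \ref{item:kRR_homosked_Rd}, \Cref{lemma:reweighted_kernel} gives $\norm{\eta(y)}_{\cH}=\norm{m_\tau\,\eta(\quark,y)}_{\cH_{k_0}}\simeq\norm{m_\tau\,p_\varepsilon(y-f(\quark))}_{H^\ell(\bR^d)}$. I would apply the Leibniz rule. Every derivative of $m_\tau$ has the form (polynomial)$\times m_\tau$, so it is bounded by $C\,e^{-\tau'\norm{x}_\Gamma^2}$ for some $\tau'<\tau/2$. Each resulting term is $\partial^\gamma m_\tau$ times a Faà di Bruno term from before. After Tonelli and the substitution in $y$, the bound becomes $\norm{p_\varepsilon^{(j)}}_{L^2}^2\int_{\bR^d}|\partial^\gamma m_\tau|^2\prod_i|\partial^{\beta_i}f|^2\,\rd x$. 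The Gaussian weight is integrable, and the product is bounded by the same $L^\infty$ and Hölder estimate on $\bR^d$. In particular, the $\ell=0$ term $\int m_\tau^2\,\rd x<\infty$ is now finite, which is exactly what the reweighting buys compared with footnote~\ref{footnote:reweighting_necessary}.
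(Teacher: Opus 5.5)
Your proposal is correct and follows essentially the same route as the paper's proof. The shared steps are Fa\`a di Bruno in $x$; translation invariance in $y$ to factor out $\norm{p_\varepsilon^{(s)}}_{L^2(\bR)}$; a Moser-type $L^2$ bound on $\prod_j \partial^{\beta_j} f$ via $\ell>d/2$ and a Stein extension; the Leibniz rule with the Gaussian weight $m_\tau$ in case \ref{item:kRR_homosked_Rd}; and then \Cref{thm:CEO_bounds_various_spaces}\ref{item:CEO_H_HS},\ref{item:CEO_L2_Tr}.

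The differences are only in detail. You derive the product estimate from H\"older plus Sobolev embeddings, where the paper cites Gagliardo--Nirenberg--Moser. You also spell out the Bochner measurability argument and the approximation argument for the chain rule, which the paper treats only in passing.
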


\begin{proof}
The proof is given in \Cref{sec:proofs_regression}.
\end{proof}

\begin{corollary}[CME representation for nonparametric regression]
	\label{cor:kRR_CME_representation}
	Let the assumptions of \Cref{thm:kRR_well_specified_homoskedastic} hold
	and assume that the canonical embedding
	$\cH_{k_\cX} \hookrightarrow L^2(P_X)$ is injective. In case
	\ref{item:kRR_homosked_bounded}, assume additionally that $k_\cX$ is
	measurable; in case \ref{item:kRR_homosked_Rd}, assume that $k_0$ is
	continuous and that
	\begin{equation}
	\label{equ:regression_exp_moment_X}
	\bE\big[\exp\big(\tau \norm{X}_{\Gamma}^{2}\big)\big] < \infty.
	\end{equation}
	Then $(X,k_\cX)$ satisfies \Cref{ass:kernel_rv_pair}.
	Further, if $k_\cY$ is a kernel on $\bR$ such that $(Y,k_\cY)$ also
	satisfies \Cref{ass:kernel_rv_pair} and
	$\cH_{k_\cY} \hookrightarrow L^2(\nu_\cY)$ continuously, then
	\begin{enumerate}[label=(\alph*)]
	\item
	\label{item:kRR_CME_representation_HS}
	$K \in \HS(\cH_{k_\cY},\cH_{k_\cX})$, the CME assumption
	\eqref{eq:cme_assumption} is satisfied for every $g \in \cH_{k_\cY}$,
	and the CME representations
	\eqref{eq:uncentred_cme_formula}--\eqref{eq:uncentred_cme_formula_g}
	hold;
	\item
	\label{item:kRR_CME_representation_rate}
	in case \ref{item:kRR_homosked_bounded}, the CME estimator
	$\hat{K}_{\varepsilon}$ in \eqref{equ:K_CME_approximation} satisfies the
	estimation rate \eqref{equ:CME_approximation_rate}.\footnote{In case
	\ref{item:kRR_homosked_Rd}, \Cref{prop:cme_learning_rate} does not
	apply, since $\sup_{x \in \bR^d} k_\tau(x,x) = \infty$, cf.\ \Cref{rem:rate_bounded_vs_unbounded}.}
	\end{enumerate}
\end{corollary}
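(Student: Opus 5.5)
The plan is to verify the three items of \Cref{ass:kernel_rv_pair} for $(X,k_\cX)$ and then invoke \Cref{cor:regularity_conditional_expectation_operator_CME} and \Cref{prop:cme_learning_rate}.

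\emph{Separability.} Measurability of $k_\cX$ is assumed in case~\ref{item:kRR_homosked_bounded}. In case~\ref{item:kRR_homosked_Rd} it follows from continuity of $k_0$, since $k_\tau$ is then continuous. Separability comes from norm-equivalence with a separable Sobolev space. In case~\ref{item:kRR_homosked_bounded} we have $\cH_{k_\cX}\simeq H^\ell(\cX)$. In case~\ref{item:kRR_homosked_Rd}, \Cref{lemma:reweighted_kernel} shows that $f\mapsto m_\tau f$ is an isometric isomorphism $\cH_{k_\tau}\to\cH_{k_0}\simeq H^\ell(\bR^d)$, and separability transfers through it.

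\emph{Square-integrability of the feature map.} This is item~\ref{item:basic_assumptions_feature_map_L2}.
\begin{itemize}
\item In case~\ref{item:kRR_homosked_bounded}, $\ell>d/2$ gives the Sobolev embedding $H^\ell(\cX)\hookrightarrow C_b(\cX)$. Hence $k_\cX(x,x)=\sup_{\|h\|\le1}h(x)^2\le C$ uniformly in $x$, and in particular $\bE[k_\cX(X,X)]<\infty$. The same bound shows $k_\cX$ is bounded, which is needed later.
\item In case~\ref{item:kRR_homosked_Rd}, the same embedding for $H^\ell(\bR^d)$ gives $\sup_x k_0(x,x)<\infty$. Therefore $k_\tau(x,x)=m_\tau(x)^{-2}k_0(x,x)\le C\exp(\tau\|x\|_\Gamma^2)$, and $\bE[k_\tau(X,X)]<\infty$ follows directly from \eqref{equ:regression_exp_moment_X}.
\end{itemize}
Injectivity (item~\ref{item:basic_assumptions_ae_separation}) is assumed. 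So $(X,k_\cX)$ satisfies \Cref{ass:kernel_rv_pair}.

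\emph{Part~\ref{item:kRR_CME_representation_HS}.} \Cref{thm:kRR_well_specified_homoskedastic} gives \Cref{assump:general_Radon_Nikodym_setting} with $\cH=\cH_{k_\cX}$. In case~\ref{item:kRR_homosked_bounded} the theorem is stated for $\cH\simeq H^\ell(\cX)$, so I take $\cH=\cH_{k_\cX}$ itself. This is legitimate because membership in $L^2(\nu_\cY;\cdot)$ is invariant under passing to an equivalent norm on the same space: the Bochner measurability is unchanged and the norms differ by a constant. Together with the hypotheses on $(Y,k_\cY)$, \Cref{assump:two_RKHS_setting} holds. \Cref{cor:regularity_conditional_expectation_operator_CME} then yields $K\in\HS(\cH_{k_\cY},\cH_{k_\cX})$, the CME assumption \eqref{eq:cme_assumption}, and the representations \eqref{eq:uncentred_cme_formula}--\eqref{eq:uncentred_cme_formula_g}.

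\emph{Part~\ref{item:kRR_CME_representation_rate}.} In case~\ref{item:kRR_homosked_bounded}, $k_\cX$ is bounded, as shown above. The eigenvalue decay $\lambda_j\le c\,j^{-1/p}$ holds at least with $p=1$, since $C_{XX}$ is trace class. \Cref{prop:cme_learning_rate} then gives \eqref{equ:CME_approximation_rate}.

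The only step needing real care is the exponential-moment bound on the diagonal of $k_\tau$. It relies on the pointwise bound $k_0(x,x)\le C$, which comes from the Sobolev embedding via $\ell>d/2$ and norm-equivalence. Everything else is bookkeeping.
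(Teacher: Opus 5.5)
Your proof is correct and follows the paper's overall structure. You verify \Cref{ass:kernel_rv_pair} for $(X,k_\cX)$ case by case, then invoke \Cref{cor:regularity_conditional_expectation_operator_CME} for (a) and \Cref{prop:cme_learning_rate} for (b). Case (A) is handled exactly as in the paper: the Sobolev embedding gives a uniformly bounded diagonal.

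The one real difference is the diagonal bound in case (B). The paper argues that translation invariance of $k_0$ makes $k_0(x,x)\equiv c_0$ constant, so that $\bE[k_\tau(X,X)]=c_0\,\bE[\exp(\tau\norm{X}_\Gamma^2)]$. You instead bound $\sup_x k_0(x,x)<\infty$ via $H^\ell(\bR^d)\hookrightarrow C_b(\bR^d)$ and norm equivalence.

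Your route is actually preferable. Translation invariance of $k_0$ is a hypothesis of \Cref{thm:unbounded_F_weighted_sobolev}, but not of case (B) of \Cref{thm:kRR_well_specified_homoskedastic} or of this corollary, so the paper's step relies on an unstated assumption. Your argument needs only $\ell>d/2$, $\cH_{k_0}\simeq H^\ell(\bR^d)$, and the assumed continuity of $k_0$.

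You should make the role of that continuity explicit. It makes every $h\in\cH_{k_0}$ continuous, so $h$ coincides everywhere with the continuous representative of its Sobolev class. That is what licenses the pointwise bound $|h(x)|\le\norm{h}_\infty$ for RKHS elements.

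In the translation-invariant case, the paper's identity is an equality, so it also shows that the moment condition \eqref{equ:regression_exp_moment_X} is necessary. Your inequality gives only sufficiency, but that is all the corollary claims.
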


\begin{proof}
The proof is given in \Cref{sec:proofs_regression}.
\end{proof}

\subsection{Bayesian Inverse Problems (Backward Uncertainty)}
\label{sec:Bayesian_IP}

The results of \Cref{cor:regularity_conditional_expectation_operator_CME}---and hence the CME assumption
\eqref{eq:cme_assumption} and representations
\eqref{eq:uncentred_cme_formula}--\eqref{eq:uncentred_cme_formula_g}---apply in
particular to the standard setting of Bayesian inverse problems
\citep{Stuart2010IP}, where $X$ is a noisy measurement of $F(Y)$ corrupted by
Gaussian noise, under rather mild tail conditions on $F(Y)$.
Throughout this section, we use the following notation for a reweighted inner product on $\bR^{d}$ and the corresponding norm, where $\Gamma \in \bR^{d \times d}$ denotes symmetric and positive definite matrix and $x,y \in \bR^{d}$:
\[
\innerprod{x}{y}_{\Gamma} \coloneqq x^\top \Gamma^{-1} y,
\qquad
\norm{x}_{\Gamma}^{2} \coloneqq \innerprod{x}{x}_{\Gamma}
= x^\top \Gamma^{-1} x .
\]

\begin{assumption}[Bayesian setting]
	\label{assump:Bayesian_setting}
    $\cX = \bR^d$, $\cY$ is a standard Borel space, $Y \colon \Omega \to \cY$ a random variable, and
	\[
	X = F(Y) + \varepsilon,
	\qquad
	\varepsilon \sim \cN(0,\Gamma),
	\]
	where $F \colon \cY \to \bR^d$ is measurable,
	$\Gamma \in \bR^{d\times d}$ is symmetric and strictly positive definite,
	and $\varepsilon$ is independent of $Y$.
    We denote by $\eta(x,y) \coloneqq \frac{\rd P_{Y\mid X=x}}{\rd P_{Y}}(y)$ the Radon--Nikodym derivative of the posterior with respect to the prior, that is,
    \begin{equation}
    \label{equ:Radon_Nikodym_BIP_exponential_tilting}
    \eta(x,y)	
	=
	\frac{\exp\big( - \tfrac{1}{2} \norm{ x - F(y) }_{\Gamma}^{2} \big)}{\int \exp\big( - \tfrac{1}{2} \norm{ x - F(\tilde y) }_{\Gamma}^{2} \big) P_{Y}(\rd \tilde y)}
	=
	\frac{\exp\!\Big(
	\innerprod{x}{F(y)}_{\Gamma}
	-
	\frac12 \norm{F(y)}_\Gamma^2
	\Big)}{\Xi(x)},
    \end{equation}	
    where
    \[
	\Xi(x)
	\coloneqq
	\int_{\cY}
	\exp\big(
	\innerprod{x}{F(y)}_{\Gamma}
	-\tfrac12 \norm{F(y)}_\Gamma^2
	\big)\,P_Y(\rd y)
    > 0.
	\]
\end{assumption}

Recall from \Cref{lemma:reweighted_kernel} the Gaussian-reweighted kernel
$k_{\tau}(x,x') = m_\tau(x)^{-1}\, k_0(x,x')\, m_\tau(x')^{-1}$ with weight
$m_\tau(x) = \exp\big(-\tfrac{\tau}{2}\norm{x}_{\Gamma}^{2}\big)$,
cf.~\eqref{equation:Gaussian_reweighted_kernel}, whose RKHS consists of all
functions $f$ with $m_\tau f \in \cH_{k_0}$.

\begin{theorem}[Weighted Sobolev regularity criterion for Bayesian inverse problems]
	\label{thm:unbounded_F_weighted_sobolev}
	Let \Cref{assump:Bayesian_setting} hold.
	Let $\ell \in \bN_0$ and let
	$k_0$ be a continuous, translation-invariant, symmetric and positive definite kernel on $\cX = \bR^d$ such that
	$\cH_{k_0} \simeq H^{\ell}(\cX)$.\footnote{By \Cref{prop:matern_wendland_sobolev}, Mat\'ern and Wendland kernels satisfy this condition for certain choices of parameters.}
	Let $\tau > 0$ and $k_{\cX}(x,x') \coloneqq k_{\tau}(x,x')$ with the reweighted kernel $k_{\tau}$ given by \eqref{equation:Gaussian_reweighted_kernel}.
    Assume that $\Lambda \coloneqq \log \Xi \in C^{\ell}(\cX)$ and that its derivatives grow at most polynomially, that is, there exist $C_{\Lambda}>0$ and $r \geq 1$ such that, for every multi-index
	$\alpha\in \bN_0^d$ with $1\le |\alpha|\le \ell$ and all $x \in \cX$,
	\begin{equation}
		\label{eq:log_partition_growth_assumption}
		|D^\alpha \Lambda(x)|
		\le
		C_{\Lambda} (1+\|x\|_2)^r.
	\end{equation}
    Then $\eta(y)\in \cH_{k_{\cX}}$ for $P_Y$-almost every $y\in \cY$.
	Moreover, $\eta \in L^2(P_Y ; \cH_{k_{\cX}})$, and $K \in \HS(L^2(P_{Y}),\cH_{k_{\cX}})$ 
	with 
	$\norm{K}_{\HS(L^2(P_{Y}),\cH_{k_{\cX}})} = \norm{\eta}_{L^2(P_Y ; \cH_{k_{\cX}})}$ if
    \begin{equation}
    \label{equ:conditions_on_tau}
        \tau > 1
        \qquad\quad
        \text{or}
        \qquad\quad
        \exists \lambda > 0\colon
        \quad
        \tau > \frac{1}{1+\lambda}
        \ \text{ and }\ 
        \bE\big[\exp \big( \lambda \norm{F(Y)}_\Gamma^2 \big) \big] < \infty.
    \end{equation}
\end{theorem}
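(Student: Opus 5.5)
The plan is to reduce everything to an explicit Gaussian-type integral via the reweighting lemma, and then apply \Cref{thm:CEO_bounds_various_spaces}\ref{item:CEO_H_HS}. By \Cref{lemma:reweighted_kernel} we have $\norm{\eta(y)}_{\cH_{k_\cX}} = \norm{m_\tau \eta(y)}_{\cH_{k_0}} \asymp \norm{m_\tau\eta(y)}_{H^\ell(\bR^d)}$. Since $\ell\in\bN_0$, it therefore suffices to show two things:
\begin{itemize}
\item for $P_Y$-a.e.\ $y$ and all $|\alpha|\le\ell$, $D^\alpha_x(m_\tau\eta(\quark,y))\in L^2(\bR^d)$;
\item $\int_\cY\sum_{|\alpha|\le \ell}\norm{D^\alpha_x(m_\tau\eta(\quark,y))}_{L^2}^2\,P_Y(\rd y)<\infty$.
\end{itemize}
Writing $u\coloneqq F(y)$, \eqref{equ:Radon_Nikodym_BIP_exponential_tilting} and completing the square give
\[
m_\tau(x)\eta(x,y)=\exp\big(g_u(x)-\Lambda(x)\big),
\qquad
g_u(x)= -\tfrac{\tau}{2}\norm{x-u/\tau}_\Gamma^2+\tfrac12\big(\tfrac1\tau-1\big)\norm{u}_\Gamma^2 .
\]
(This uses that $\norm{\quark}_\Gamma$ in $m_\tau$ is the same $\Gamma^{-1}$-norm.)

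\textbf{Step 1: derivative bounds.} By the Faà di Bruno/chain rule, $D^\alpha e^{g_u-\Lambda}$ equals $e^{g_u-\Lambda}$ times a polynomial in the derivatives of $g_u-\Lambda$ of order $\le|\alpha|$. The derivatives of $g_u$ are affine in $(x,u)$ or constant. The derivatives of $\Lambda$ of order $1,\dots,\ell$ satisfy \eqref{eq:log_partition_growth_assumption}. Hence
\[
|D^\alpha(m_\tau\eta)(x,y)|\le C\,(1+\norm{x}_2+\norm{u}_2)^{N}\,e^{g_u(x)-\Lambda(x)}
\]
for some $N$ depending on $\ell$ and $r$.

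\textbf{Step 2: controlling $e^{-\Lambda}$, the main obstacle.} The Gaussian factor must not be destroyed by $e^{-\Lambda}$. Integrating the polynomial derivative bound would only give $e^{C|x|^{r+1}}$, which is useless, so I will instead use convexity. $\Xi$ is a Laplace transform of the finite measure $e^{-\frac12\norm{F}_\Gamma^2}\,\rd P_Y$ pushed through $F$, so $\Lambda=\log\Xi$ is convex by Hölder's inequality (valid also for $\ell=0$, and finite since $\Lambda\in C^\ell$). When $\ell\ge1$, convexity gives the supporting-hyperplane bound $\Lambda(x)\ge \Lambda(0)+\nabla\Lambda(0)\cdot x$. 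For $\ell=0$ I would use any subgradient at $0$ instead. In either case
\[
e^{-\Lambda(x)}\le C e^{c\norm{x}_2}.
\]

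\textbf{Step 3: the $x$-integral and measurability.} Substituting $z=x-u/\tau$ and using $\norm{x}_2\le\norm{z}_2+\norm{u}_2/\tau$, the Gaussian $e^{-\tau\norm{z}_\Gamma^2}$ absorbs the polynomial and the $e^{c\norm{z}}$ factors. This yields
\[
\norm{m_\tau\eta(y)}^2_{H^\ell}\le C\,(1+\norm{u}_2)^{2N}\exp\Big(\big(\tfrac1\tau-1\big)\norm{u}_\Gamma^2+c'\norm{u}_2\Big),
\]
which is finite for every $y$. In particular $\eta(y)\in\cH_{k_\cX}$. Bochner measurability of $y\mapsto\eta(y)$ follows from separability of $\cH_{k_\cX}$ together with weak measurability. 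For the latter, $y\mapsto\innerprod{\eta(y)}{k_\cX(x,\quark)}=\eta(x,y)$ is measurable, and kernel sections are total in $\cH_{k_\cX}$.

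\textbf{Step 4: integration against $P_Y$.} If $\tau>1$, the exponent $(\tfrac1\tau-1)\norm{u}^2_\Gamma+c'\norm{u}_2$ is bounded above in $u$, so the integrand is uniformly bounded and the $P_Y$-integral is finite. Otherwise, $\tau>1/(1+\lambda)$ means $\tfrac1\tau-1<\lambda$. Choose $\lambda'\in(\tfrac1\tau-1,\lambda)$; then $(1+\norm u)^{2N}e^{(1/\tau-1)\norm u^2_\Gamma+c'\norm u}\le C e^{\lambda'\norm u_\Gamma^2}\le Ce^{\lambda\norm u^2_\Gamma}$, and $\bE[e^{\lambda\norm{F(Y)}^2_\Gamma}]<\infty$ gives $\eta\in L^2(P_Y;\cH_{k_\cX})$. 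Finally, \Cref{thm:CEO_bounds_various_spaces}\ref{item:CEO_H_HS} with $\nu_\cY=P_Y$ yields $K\in\HS(L^2(P_Y),\cH_{k_\cX})$ with $\norm{K}_{\HS}=\norm{\eta}_{L^2(P_Y;\cH_{k_\cX})}$.
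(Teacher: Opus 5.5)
Your proof is correct and follows the paper's route. Both arguments use the reweighting lemma to reduce to $\norm{m_\tau\eta(\quark,y)}_{H^\ell(\bR^d)}$, then Fa\`a di Bruno with the growth condition \eqref{eq:log_partition_growth_assumption}, a Gaussian envelope centred at a multiple of $F(y)$ with the corresponding change of variables, and finally integration of the resulting bound $\exp\big((\tfrac1\tau-1+\text{small})\norm{F(y)}_\Gamma^2\big)$ against $P_Y$.

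The only local difference is how you control $e^{-\Lambda}$. You use convexity of the log-Laplace transform and a supporting hyperplane at $0$. The paper instead lower-bounds $\Xi(x)\ge \pi_R\exp\big(-R\norm{x}_\Gamma-\tfrac12 R^2\big)$ by restricting the integral to the positive-probability event $\{\norm{F(Y)}_\Gamma\le R\}$, and then trades the linear term for an arbitrarily small quadratic loss $\delta_1\norm{x}_\Gamma^2$. Both give equivalent control, and your explicit verification of Bochner measurability of $y\mapsto\eta(y)$ is a detail the paper leaves implicit.
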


\begin{proof}
The proof is given in \Cref{sec:proofs_bip}.
\end{proof}

Theorem~\ref{thm:unbounded_F_weighted_sobolev} is formulated in terms of the
growth condition \eqref{eq:log_partition_growth_assumption} on the log-partition
function
\[
\Lambda(x)=\log \Xi(x),\qquad
\Xi(x)=\int_{\cY}
\exp\!\Big(
\innerprod{x}{F(y)}_{\Gamma}
-\tfrac12\|F(y)\|_\Gamma^2
\Big)\,P_Y(\rd y).
\]
In the Bayesian setting of \Cref{assump:Bayesian_setting}, the Radon--Nikodym
derivative \eqref{equ:Radon_Nikodym_BIP_exponential_tilting}
shows that the posterior distribution \(P_{Y \, | \,  X=x}\) is obtained from the
prior \(P_Y\) by exponential tilting with potential
\(\innerprod{x}{a(y)}-\tfrac12\norm{F(y)}_\Gamma^2\), where $a(y)\coloneqq\Gamma^{-1}F(y)$.
The derivatives of \(\Lambda\) admit the probabilistic representations
\[
\nabla\Lambda(x)=\bE[a(Y) \, | \,  X=x], \qquad
D^2\Lambda(x)=\Cov(a(Y) \, | \,  X=x).
\]
More generally, \(D^\alpha\Lambda(x)\) equals the cumulant of order \(|\alpha|\)
of \(a(Y)\) under the conditional law \(P_{Y \, | \,  X=x}\).
Hence the growth condition \eqref{eq:log_partition_growth_assumption}
amounts to a uniform control of the cumulants of \(a(Y)\)
under the posterior laws \(P_{Y \, | \,  X=x}\).
The following proposition provides several concrete sufficient conditions
ensuring that this assumption holds.
Their logical relations are illustrated in
\Cref{fig:log_partition_implications}.

\begin{figure}[!t]
\centering
\begin{tikzpicture}[
    >=Latex,
    node distance=11mm and 14mm,
    box/.style={
        draw, rounded corners, align=center, inner sep=6pt,
        text width=6.2cm, minimum height=1.2cm
    },
    smallbox/.style={
        draw, rounded corners, align=center, inner sep=6pt,
        text width=3.5cm, minimum height=1.2cm
    },
    arr/.style={->, thick, double, double distance=1.4pt},
    lab/.style={font=\footnotesize, inner sep=2pt},
    ]
    
    \node[smallbox] (bounded) at (-6,3)
    {%
        \ref{item:logpart_bounded} $F$ bounded%
    };
    
    \node[smallbox] (affine) at (6,3)
    {%
        \ref{item:logpart_affine_gaussian} affine-Gaussian case%
    };
    
    \node[smallbox] (rawmom) at (-6,0)
    {%
        bounded support of
        \(a(Y)\) under the posterior laws%
    };

    \node[box] (tilted) at (0,0)
    {%
        \ref{item:logpart_tilted_moments}
        uniform linear bound on
        conditional mean and centered moments
        of \(a(Y)=\Gamma^{-1}F(Y)\)%
    };

    \node[box] (structural) at (0,3)
    {%
        \ref{item:logpart_logconcave}
        strong log-concavity of the
        pushforward law of \(a(Y)=\Gamma^{-1}F(Y)\)%
    };
    
    \node[smallbox, fill=gray!8] (lambda) at (6,0)
    {%
        log-partition growth condition\\
        \eqref{eq:log_partition_growth_assumption}%
    };
    
    \draw[arr] (bounded) -- (rawmom);
    \draw[arr] (rawmom) -- (tilted);
    \draw[arr] (affine) -- (lambda);
    \draw[arr] (structural) -- (tilted);
    \draw[arr] (tilted) -- (lambda);
    
\end{tikzpicture}
\caption{Logical relations between the sufficient conditions in
    \Cref{cor:log_partition_sufficient_conditions} ensuring the log-partition growth condition \eqref{eq:log_partition_growth_assumption}.}
\label{fig:log_partition_implications}
\end{figure}

\begin{proposition}[Concrete sufficient conditions for the log-partition growth condition \eqref{eq:log_partition_growth_assumption}]
	\label{cor:log_partition_sufficient_conditions}
    The log-partition growth condition \eqref{eq:log_partition_growth_assumption} in \Cref{thm:unbounded_F_weighted_sobolev} is satisfied if any of the following hold:
	\begin{enumerate}[label=(\alph*)]
		
		\item
		\label{item:logpart_bounded}
		$F$ is bounded (or \(F\) is \(P_Y\)-essentially bounded).
		
		\item
		\label{item:logpart_affine_gaussian}				
		\(\cY\) is a Banach space, \(Y\) is Gaussian, and \(F(y)=Ay+b\)
		with $b \in \bR^{d}$ and a bounded linear operator \(A:\cY\to\mathbb R^d\).
		
		\item
		\label{item:logpart_tilted_moments}
		Let \(a(y)\coloneqq \Gamma^{-1}F(y)\) and assume \(\Xi\in C^\ell(\bR^d)\).
		For \(x\in\bR^d\), define the conditional mean and conditional centered moments by
		\[
		m_1^a(x)\coloneqq \bE[a(Y) \, | \,  X=x],
		\qquad
		m_j^a(x)\coloneqq \bE \big[\|a(Y)-m_1^a(x)\|_2^j  \, | \,  X=x\big],
		\quad j=2,\dots,\ell.
		\]
		Assume that there exists a constant \(c>0\), such that, for every \(x\in\bR^d\),
		\[
		\max \big( \|m_1^a(x)\|_2, m_2^a(x),\dots,m_\ell^a(x) \big) \le c \, (1+\|x\|_2).
		\]
	
		\item
		\label{item:logpart_logconcave}
		The pushforward law of \(a(Y)=\Gamma^{-1}F(Y)\) is strongly log-concave.
	\end{enumerate}
\end{proposition}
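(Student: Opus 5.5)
The common backbone will be the cumulant interpretation of $\Lambda=\log\Xi$. Since $\Xi(x)=\int \exp(\innerprod{x}{F(y)}_\Gamma-\tfrac12\norm{F(y)}_\Gamma^2)\,P_Y(\rd y)$ is a Laplace transform in $x$ of the measure $e^{-\frac12\norm{F}_\Gamma^2}P_Y$ pushed forward by $a=\Gamma^{-1}F$, differentiation under the integral sign (justified by local exponential integrability) shows that $D^\alpha\Xi(x)/\Xi(x)=\bE[a(Y)^\alpha \mid X=x]$ are the raw posterior moments of $a(Y)$. Faà di Bruno's formula then expresses $D^\alpha\Lambda$ as a universal polynomial in these moments, namely the joint cumulant of order $|\alpha|$. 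Rewriting it in terms of centered moments around $m_1^a(x)$, the cumulant of order $j\ge2$ is a polynomial in centered moments of orders $2,\dots,j$ with no constant term, while the first-order cumulant is $m_1^a(x)$. The polynomial degree $r$ can simply be taken large (at most $\ell$), so any bound on the moments that is polynomial in $\|x\|_2$ suffices.

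\textbf{Case (c)} is proved first, since (a) and (d) reduce to it. With $|a^{\beta}|\le\|a-m_1\|_2^{|\beta|}$ for centered monomials, each mixed centered moment of order $j$ is bounded by $m_j^a(x)\le c(1+\|x\|_2)$. Each cumulant is a finite sum of products of at most $\ell$ such factors, hence bounded by $C(1+\|x\|_2)^\ell$, which gives \eqref{eq:log_partition_growth_assumption} with $r=\ell$.

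\textbf{Case (a)}. If $\|F\|\le M$ $P_Y$-a.s., then $\|a\|_2\le M'$. The integrand of $\Xi$ is then dominated by $e^{M'\|x\|}$ uniformly on compacts, so $\Xi\in C^\infty$. Every posterior law of $a(Y)$ is supported in the ball of radius $M'$, so $\|m_1^a\|\le M'$ and $m_j^a\le(2M')^j$, which are even bounded; (c) applies.

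\textbf{Case (d)}. Write $\Xi(x)=\int e^{\langle x,z\rangle}\,\rho(z)\,\rd z$, where $z=a(y)$ and $\rho$ is the density of the pushforward of $e^{-\frac12\norm{F}_\Gamma^2}P_Y$ under $a$. Here $\frac12\norm{F}_\Gamma^2=\frac12 z^\top\Gamma z$ is a convex quadratic in $z$, so $\rho$ is strongly log-concave, say $\kappa$-strongly. The exponential tilt $e^{\langle x,z\rangle}\rho$ remains $\kappa$-strongly log-concave for every $x$. The main technical step is the mean bound. At the mode $z_x$ we have $x\in\partial(-\log\rho)(z_x)$, and strong convexity gives $\|z_x-z_0\|\le\kappa^{-1}\|x-x_0\|$. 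The distance between mean and mode of a $\kappa$-strongly log-concave law is $O(\sqrt{d/\kappa})$. Hence $\|m_1^a(x)\|\le C(1+\|x\|)$. For the centered moments, Brascamp--Lieb or the Bakry--Émery concentration gives sub-Gaussian tails with variance proxy $1/\kappa$ uniformly in $x$, so $m_j^a(x)\le C_j$. Smoothness of $\Xi$ follows from the Gaussian tails of $\rho$, and (c) applies.

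\textbf{Case (b)}. This is done by direct computation. $F(Y)$ is Gaussian on $\bR^d$, say $\cN(\mu,\Sigma)$, possibly degenerate. Completing the square shows $\Xi(x)=C\exp(\tfrac12 x^\top Qx+q^\top x)$ for some matrix $Q$ and vector $q$, by Gaussian integration with $\Gamma^{-1}+\Sigma$-type matrices. Thus $\Lambda$ is a quadratic polynomial: its first derivatives are affine, its second derivatives are constant, and higher derivatives vanish. This gives the bound with $r=1$.

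I expect the main obstacle to be case (d). The difficulty is establishing that the tilted law is strongly log-concave with the mode moving linearly in $x$, while handling a possibly non-smooth potential. A fallback is to use Prékopa–Leindler and the fact that $\nabla\Lambda$ is the mean map, whose Jacobian is the posterior covariance, bounded by $\kappa^{-1}I$ via Brascamp--Lieb. Integrating this Jacobian gives the linear bound on $m_1^a$ directly.
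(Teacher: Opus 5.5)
Your proposal is correct. Case (a) matches the paper: the posteriors of $a(Y)$ are supported in a ball, so (c) applies; you also check $\Xi\in C^\infty$, which the paper leaves implicit. Cases (b), (c) and (d) use different tools.

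In (b), the paper computes no Gaussian integral. It notes that $X=F(Y)+\varepsilon$ is Gaussian with covariance $\Sigma_F+\Gamma\succ 0$ and that $p_X(x)\propto \exp(-\tfrac12\|x\|_\Gamma^2)\,\Xi(x)$, so $\Lambda$ is read off as a quadratic. Your completion of the square reaches the same conclusion but has to track a possibly degenerate $\Sigma_F$.

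In (c), the paper's \Cref{lem:cumulant_moment_bound} uses Lyapunov's inequality to bound every product of centered moments in a cumulant of order $j$ by the single moment $m_j^a(x)$, which gives $r=1$. Your cruder product bound gives $r$ of order $\ell$. That is equally admissible, since \eqref{eq:log_partition_growth_assumption} allows any polynomial degree.

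In (d), the paper gets the $x$-uniform bounds on $m_j^a$ from Caffarelli's contraction theorem, coupling the tilted law with $\mathcal{N}(0,\lambda_*^{-1}I)$, $\lambda_*=c+\lambda_{\min}(\Gamma)$. It gets the linear bound on $m_1^a=\nabla\Lambda$ by integrating $D^2\Lambda(x)=\operatorname{Cov}[a(Y)\mid X=x]$, whose norm is at most $m_2^a(x)\le C_2$; this is exactly your fallback. Your primary route combines mode tracking via strong monotonicity of $\partial(-\log\rho)$, the mean--mode distance $\sqrt{d/\kappa}$, and Bakry--\'Emery (or Brascamp--Lieb/Poincar\'e) concentration. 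It yields explicit constants, but it needs two extra steps:
\begin{itemize}
\item a lower semicontinuous version of the potential, so that the mode exists;
\item an approximation argument (e.g.\ Moreau envelopes) to extend the integration-by-parts proof of the mean--mode bound to nonsmooth potentials.
\end{itemize}
The Hessian-integration route needs neither and is more economical. Both arguments also show that the factor $e^{-\frac12 z^\top\Gamma z}$ alone supplies the strong convexity. So in (d), a log-concave Lebesgue density for the law of $a(Y)$ would already suffice.
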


\begin{proof}
The proof is given in \Cref{sec:proofs_bip}.
\end{proof}

\begin{corollary}[CME representation for Bayesian inverse problems]
\label{cor:unbounded_F_weighted_sobolev_CME}
Let the assumptions of \Cref{thm:unbounded_F_weighted_sobolev} hold and let $\tau > 0$ and $\lambda > 0$ be such that\footnote{Note that the two conditions on $\tau$ in \eqref{equ:balancing_tau_and_lambda} are jointly satisfiable only if $\lambda > \frac{1+\sqrt5}{2}$, the golden ratio.}
\begin{equation}
\label{equ:balancing_tau_and_lambda}
    \frac{1}{1+\lambda} < \tau < \frac{\lambda}{1+2\lambda}
    \qquad
    \text{and}
    \qquad
    \bE\big[\exp \big( \lambda \norm{F(Y)}_\Gamma^2 \big) \big] < \infty.
\end{equation}
Then the pair $(X,k_{\cX})$ satisfies \Cref{ass:kernel_rv_pair}.
Further, if $k_{\cY}$ is a kernel on $\cY$ such that $(Y,k_{\cY})$ also satisfies \Cref{ass:kernel_rv_pair}, then
$K \in \HS(\cH_{k_\cY} , \cH_{k_\cX})$,
the CME assumption \eqref{eq:cme_assumption} is satisfied for every $g \in \cH_{k_{\cY}}$,
and the CME representations \eqref{eq:uncentred_cme_formula}--\eqref{eq:uncentred_cme_formula_g} hold.
\end{corollary}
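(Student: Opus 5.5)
Under the standing assumptions, the plan has two parts: verify \Cref{ass:kernel_rv_pair} for $(X,k_\cX)$, then invoke \Cref{cor:regularity_conditional_expectation_operator_CME} with $\nu_\cY = P_Y$ and $\cH = \cH_{k_\cX}$.

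For the kernel--random-variable pair, I check the three items in turn.

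\begin{itemize}
\item Item~\ref{item:basic_assumptions_kernel}: $k_\tau$ is continuous, hence measurable, because $k_0$ and $m_\tau$ are continuous. It is symmetric and positive definite by \Cref{lemma:reweighted_kernel}. Its RKHS is isometric to $\cH_{k_0}\simeq H^\ell(\bR^d)$ via $f\mapsto m_\tau f$, so it is separable.
\item Item~\ref{item:basic_assumptions_ae_separation}: since $\varepsilon\sim\cN(0,\Gamma)$ is independent of $Y$, the law $P_X$ has a strictly positive Lebesgue density $x\mapsto \bE[\varphi_\Gamma(x-F(Y))]$, so its support is all of $\bR^d$. Elements of $\cH_{k_\tau}$ are continuous, being reproduced by a continuous kernel. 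A continuous function vanishing $P_X$-a.e.\ therefore vanishes Lebesgue-a.e., hence everywhere, which gives injectivity.
\item Item~\ref{item:basic_assumptions_feature_map_L2}: translation invariance gives $k_\tau(x,x) = k_0(0,0)\,\exp(\tau\norm{x}_\Gamma^2)$. So I need $\bE[\exp(\tau\norm{X}_\Gamma^2)]<\infty$.
\end{itemize}

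The key step is this exponential moment, which the upper bound $\tau<\lambda/(1+2\lambda)$ is meant to secure. Write $X = F(Y)+\varepsilon$ and use the weighted Young inequality $\norm{u+v}_\Gamma^2 \le (1+\delta)\norm{u}_\Gamma^2 + (1+\delta^{-1})\norm{v}_\Gamma^2$. Independence then gives
\[
\bE\big[e^{\tau\norm{X}_\Gamma^2}\big]\le \bE\big[e^{\tau(1+\delta)\norm{F(Y)}_\Gamma^2}\big]\;\bE\big[e^{\tau(1+\delta^{-1})\norm{\varepsilon}_\Gamma^2}\big].
\]
Since $\norm{\varepsilon}_\Gamma^2\sim\chi^2_d$, the second factor is finite iff $\tau(1+\delta^{-1})<1/2$. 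The first factor is finite if $\tau(1+\delta)\le\lambda$. Take $\delta=\lambda/\tau-1$, which is positive because $\tau<\lambda/(1+2\lambda)<\lambda$. The Gaussian condition becomes $\tau\lambda/(\lambda-\tau)<1/2$, i.e.\ $\tau(1+2\lambda)<\lambda$, which is exactly the assumed upper bound. So the choice $\delta=\lambda/\tau-1$ matches the hypothesis, and this step closes.

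Finally, the lower bound $\tau>1/(1+\lambda)$ together with $\bE[\exp(\lambda\norm{F(Y)}_\Gamma^2)]<\infty$ is the second alternative in \eqref{equ:conditions_on_tau}. Hence \Cref{thm:unbounded_F_weighted_sobolev} yields $\eta\in L^2(P_Y;\cH_{k_\cX})$, i.e.\ \Cref{assump:general_Radon_Nikodym_setting} holds with $\nu_\cY=P_Y$. This measure is $\sigma$-finite, and absolute continuity holds by \eqref{equ:Radon_Nikodym_BIP_exponential_tilting}, whose right-hand side is jointly measurable.

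By \Cref{rem:special_cases_koopman_mu}(iii), the embedding $\cH_{k_\cY}\hookrightarrow L^2(P_Y)$ is automatic once $(Y,k_\cY)$ satisfies \Cref{ass:kernel_rv_pair}, with bound $\bE[k_\cY(Y,Y)]^{1/2}$. \Cref{cor:regularity_conditional_expectation_operator_CME} then gives $K\in\HS(\cH_{k_\cY},\cH_{k_\cX})$, the representability condition \eqref{eq:cme_assumption}, and the formulas \eqref{eq:uncentred_cme_formula}--\eqref{eq:uncentred_cme_formula_g}.
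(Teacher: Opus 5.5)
Your proposal is correct and follows the paper's proof essentially step by step. You verify the three items of \Cref{ass:kernel_rv_pair} for $(X,k_\tau)$: continuity and separability via \Cref{lemma:reweighted_kernel}, $\bE[\exp(\tau\norm{X}_\Gamma^2)]<\infty$ via the weighted Young splitting, independence and the $\chi^2_d$ moment generating function, and injectivity via full support of $P_X$; you then combine the second alternative of \eqref{equ:conditions_on_tau} in \Cref{thm:unbounded_F_weighted_sobolev} with \Cref{cor:regularity_conditional_expectation_operator_CME} for $\nu_\cY=P_Y$. The only difference is cosmetic: your splitting parameter $\delta=\lambda/\tau-1$, which makes the $F(Y)$-exponent exactly $\lambda$, replaces the paper's $\zeta=2\lambda$, and both reduce finiteness of the Gaussian factor to precisely $\tau(1+2\lambda)<\lambda$.
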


\begin{proof}
The proof is given in \Cref{sec:proofs_bip}.
\end{proof}

\begin{remark}[Role of the $\tau$-reweighting and the exponential-moment assumption in \eqref{equ:conditions_on_tau} and \eqref{equ:balancing_tau_and_lambda}]
	\label{rem:why_reweighting}
	The kernel reweighting in the definition of $k_{\tau}$ (cf.\ \Cref{lemma:reweighted_kernel})
	can, in general, not be omitted.
	Indeed, even in the degenerate case where $F\equiv c$ is constant, one has
	$\eta(\quark,y)\equiv 1$ for every $y$, so $\eta(\quark,y)\notin L^2(\bR^d)$ and hence
	$\eta(\quark,y)$ cannot belong to an RKHS of Sobolev type on $\bR^d$.	
	The reweighting by $m_\tau$ replaces the requirement
	$\eta(\quark,y)\in H^\ell(\bR^d)$
	by the weaker condition
	$m_\tau(\quark)\eta(\quark,y)\in H^\ell(\bR^d)$, ensuring
	$\eta(\quark,y)\in \cH_{k_{\cX}}$.	
	By \Cref{thm:unbounded_F_weighted_sobolev}, this pointwise RKHS-membership is guaranteed under the stated assumptions for every $\tau>0$.
	If $\tau$ is chosen sufficiently large, for example $\tau>1$ as in \eqref{equ:conditions_on_tau}, this even yields
	$\eta\in L^2(P_Y;\cH_{k_{\cX}})$.
    Note that for $\tau > 0$ the kernel $k_{\tau}$ is unbounded, hence \Cref{prop:cme_learning_rate} is not applicable, cf.\ \Cref{rem:rate_bounded_vs_unbounded}.
	
	On the other hand, increasing $\tau$ makes the diagonal of the reweighted kernel grow faster, and therefore makes it harder to satisfy the integrability condition
	$\bE[k_{\cX}(X,X)]<\infty$
	from \Cref{ass:kernel_rv_pair}.
	Thus, condition \eqref{equ:balancing_tau_and_lambda} balances the choice of $\tau$
	so that both
	\[
	\eta\in L^2(P_Y;\cH_{k_{\cX}})
	\qquad\text{and}\qquad
	(X,k_{\cX}) \text{ satisfies \Cref{ass:kernel_rv_pair}}
	\]
	hold simultaneously under the additional assumption
	$\bE\big[\exp\big(\lambda \norm{F(Y)}_\Gamma^2\big)\big]<\infty$ for some $\lambda>\frac{1+\sqrt5}{2}$,
	which is a natural exponential-tail condition on the pushforward random variable $F(Y)$
	and is satisfied, for example, in each of the following situations:
	\begin{enumerate}[label=(\roman*)]
		
		\item
		$F$ is bounded (or \(F\) is \(P_Y\)-essentially bounded).
		
		\item
		$Y$ is Gaussian in a Banach space $\cY$, 
		$F(y)=Ay+b$ with a bounded linear operator $A:\cY\to\mathbb R^d$,
		and the covariance matrix $\Sigma_F$ of the (Gaussian) random variable $F(Y)$ satisfies
		\begin{equation}
		\label{equ:F_Y_covariance_small_relative_to_Gamma}
		\lambda_{\max}\big(\Gamma^{-1/2}\Sigma_F\Gamma^{-1/2}\big)<\frac{1}{1+\sqrt5},
		\end{equation}
		where $\lambda_{\max}$ denotes the largest eigenvalue (that is, the covariance of $F(Y)$ is sufficiently small relative to $\Gamma$).
			
		\item
		more generally, $F(Y)$ is sub-Gaussian with parameter matrix $\Sigma_F$
		satisfying
		\eqref{equ:F_Y_covariance_small_relative_to_Gamma}.
		
		\item
		more generally still, $F(Y)$ has sufficiently light tails so that
		$\norm{F(Y)}_\Gamma^2$ admits an exponential moment of order strictly larger than $\frac{1+\sqrt5}{2}$.
		
	\end{enumerate}
\end{remark}

\subsection{Koopman Operator of Stochastic Differential Equations} \label{sec:koopman_sde_class}

In this subsection, we verify the density regularity assumption $\eta_t \in L^2(\nu_{\cY};\cH)$ (\Cref{assump:general_Radon_Nikodym_setting}) for uniformly elliptic stochastic differential equations. Throughout, we take $\cY = \bR^d$ and the reference measure $\nu_{\cY} = \bslambda$ to be the $d$-dimensional Lebesgue measure. 

From the perspective of conditional expectation operators, stochastic differential equations provide a dynamical counterpart to the regression setting of \Cref{sec:regression}. While regression studies a single conditional expectation $x \mapsto \bE[Y \, | \, X = x]$, stochastic dynamics give rise to an entire family of conditional expectation operators describing the evolution of observables over time, namely the Koopman semigroup $(K_t)_{t \geq 0}$. Stochastic differential equations therefore constitute one of the principal applications for Koopman analysis of stochastic dynamical systems \citep{CrnjaricMacesivMesic:koopman_spectrum_random_ds} with broad application areas such as molecular dynamics \citep{NateghiNuske2025:coarse_graining_kedmd}, climate science \citep{SantosLucarini2020:Koopman_climate}, and quantum systems \citep{KlusNuskePeitz2022:Koopman_quantum}, among many others. Kernel-based numerical methods such as kernel EDMD and related covariance operator representations have been widely studied for the approximation of Koopman operators \citep{WillRowl15, KlusKoltaiSchutte2016:num_approx_koopman, KlusNuskeHamzi2020:KoopmanGeneratorEDMD, Klus2020Eigendecompositions}, while more recent works develop statistical learning approaches in RKHSs \citep{Kostic2022:KooopmanRegressionRKHS, Hou2026:SparseOnlineKoopmanLearning} and  approximation error bounds \citep{PhilScha25:kernel:JNLS, hertel2025koopmanstochasticdynamicserror}. The regularity theory developed in this work provides a unified analytical foundation for these results by relating the smoothing properties of stochastic Koopman operators directly to regularity of the underlying transition densities.

Consequently, our key analytical ingredient is the existence of such smooth transition densities satisfying quantitative derivative estimates with respect to the initial (or \emph{backward}) variable $x$. These bounds allow us to establish the density regularity condition
for suitable Sobolev-type Hilbert spaces $\cH$. For this, we consider the time-homogeneous Itô diffusion 
\begin{equation} \label{eq:basic_sde_time_homogeneous}
    \rd X_t = b(X_t)\, \rd t + \sigma(X_t)\, \rd W_t, \quad t > 0,
    \qquad
    X_0 = x \in \cX \subseteq \bR^{d},
\end{equation}
where $(W_t)_{t \geq 0}$ is a $d$-dimensional Brownian motion, $b\colon \bR^{d} \to \bR^d$ denotes the drift, and $\sigma \colon \bR^{d} \to \bR^{d \times d}$ the diffusion coefficient.
Under standard Lipschitz and growth assumptions, \eqref{eq:basic_sde_time_homogeneous} admits a unique strong solution $(X_{t})_{t \geq 0}$ \citep[Theorem 5.2.1]{Oksendal2003SDE}.
Its transition densities $(\eta_t)_{t > 0}$ induce the family of
\emph{stochastic Koopman operators}
\begin{align}
\label{equ:Koopman_definition_and_integral_form}
    K_tg(x) \coloneqq \bE[g(X_t) \, | \, X_0 = x] = \int_\cY g(y) \, \eta_t(x,y) \, \nu_\cY(\rd y)
\end{align}
for bounded, measurable $g: \bR^d \to \bR$, which coincide precisely with the CEO \eqref{equ:generic_CEO} for $X = X_{0}$ and $Y = X_{t}$.  

The family $(K_t)_{t \geq 0}$ forms a Markov semigroup associated with the diffusion \eqref{eq:basic_sde_time_homogeneous}. Moreover, the transition densities $(\eta_t)_{t > 0}$ correspond to the fundamental solution of the parabolic operator associated with the SDE. Under the ellipticity and smoothness assumptions below, classical parabolic regularity theory yields Gaussian-type bounds on the derivatives of $\eta_t$ w.r.t.\ $x$. Integrating these bounds in the forward variable $y$ implies the required Sobolev regularity of $\eta_t$ and thereby verifies \Cref{assump:general_Radon_Nikodym_setting}. 
For the remainder of this section, we fix the following assumptions on the coefficients of the SDE \eqref{eq:basic_sde_time_homogeneous}. 

\begin{assumption}[Regular Itô diffusion with uniformly elliptic noise]
\label{ass:sde_unif_elliptic_regular}
We consider the SDE \eqref{eq:basic_sde_time_homogeneous} and assume:
\begin{enumerate}[label=(\roman*)]
    \item
    \label{item:sde_unif_ellipt_final}
    \textbf{Uniform ellipticity:} There exists $\kappa > 0$ such that $\kappa \, \|\xi\|_2^2 \le \norm{\sigma(x)^\top \xi}_{2}^{2}$ for all $ x, \xi \in \bR^d$, or equivalently, $a(x) = \sigma(x) \sigma(x)^\top$ is uniformly positive definite. 

    \item
    \label{item:sde_coeff_regularity}
    \textbf{Coefficient regularity:} For some $\ell \in \bN_{\geq 1}$, the SDE coefficients satisfy $b \in C_b^{\ell}(\bR^d, \bR^d)$ and $\sigma \in C_b^{\ell}(\bR^d, \bR^{d\times d})$,
    where $C_b^\ell(\cS, \cT)$ denotes the space of $\ell$-times continuously differentiable functions $f: \cS \to \cT$ whose derivatives up to order $\ell$ are bounded. 
\end{enumerate}
\end{assumption}

While not necessarily minimal (see \Cref{rem:extensions_unified}), \Cref{ass:sde_unif_elliptic_regular} provides a convenient regime in which classical parabolic regularity theory yields the required density regularity condition $\eta_t \in L^2(\nu_{\cY}, H^\ell(\cX) )$.
For this, we first gather the following properties, which are standard results in both the partial differential equations (PDE) and stochastic analysis literature \citep{Frie1964, StroVara1997}.

\begin{proposition}[SDE density regularity and derivative bounds]\label{prop:sde_density_regularity_bounds}
Let \Cref{ass:sde_unif_elliptic_regular} hold and fix $T > 0$. Then the SDE \eqref{eq:basic_sde_time_homogeneous} admits a unique strong solution $(X_t)_{t \in [0,T]}$. Moreover, for every $t \in (0,T]$, the conditional law $P_{X_t|X_0=x}$ is absolutely continuous w.r.t.\ $\nu_\cY = \bslambda$ with transition density $\eta_t$ satisfying \eqref{equ:Koopman_definition_and_integral_form} for all bounded measurable $g: \bR^d \to \bR$. 

Further, for every multi-index $|\alpha| \le \ell$, the map $x \mapsto \eta_t(x,y)$ admits weak partial derivatives $\partial_x^\alpha \eta_t$, and there exist constants $C_\alpha, \kappa > 0$ independent of $t, x, y$ such that
    \[
    |\partial_x^\alpha \eta_t(x,y)|
    \leq
    C_\alpha t^{-(|\alpha|+d)/2} \exp\left( -\kappa \, \frac{\|x-y\|_2^2}{t} \right).
    \]
\end{proposition}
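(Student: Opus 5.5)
This result is a collection of classical facts, so the plan is to assemble them from the literature rather than re-derive them. The proof has three parts, in this order: strong existence and uniqueness, existence of a transition density, and Gaussian bounds for $x$-derivatives of that density.

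\textbf{Existence and uniqueness.} Under \Cref{ass:sde_unif_elliptic_regular}\ref{item:sde_coeff_regularity}, $b$ and $\sigma$ have bounded first derivatives because $\ell\ge 1$. They are therefore globally Lipschitz, and being bounded they also satisfy the linear growth condition. The standard Itô theorem \citep[Theorem~5.2.1]{Oksendal2003SDE} then gives a unique strong solution on $[0,T]$ for every deterministic initial value $x$. The solution is also measurable in $x$, which identifies $P_{X_t\mid X_0=x}$ with the law of the solution started at $x$.

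\textbf{Existence of the density.} The generator is $L = \tfrac12\sum_{i,j} a_{ij}\partial_i\partial_j + \sum_i b_i\partial_i$ with $a=\sigma\sigma^\top$. By \Cref{ass:sde_unif_elliptic_regular}\ref{item:sde_unif_ellipt_final}, $L$ is uniformly elliptic. Its coefficients are bounded and Hölder continuous, and in fact $C_b^{\ell}$.
\begin{itemize}
\item The parametrix method \citep[Ch.~1, Thms.~8--11]{Frie1964} constructs a fundamental solution $\Gamma(t,x;y)$ of $\partial_t - L$, acting in the backward variable $x$.
\item This fundamental solution satisfies $|\Gamma|\le C t^{-d/2}e^{-\kappa\|x-y\|_2^2/t}$ and the corresponding bounds for $\partial_x^\alpha\Gamma$ with $|\alpha|\le 2$.
\item By the uniqueness of the martingale problem \citep{StroVara1997}, or equivalently uniqueness for the backward Kolmogorov equation, one gets $\bE[g(X_t)\mid X_0=x] = \int g(y)\,\Gamma(t,x;y)\,\rd y$ for bounded continuous $g$.
\item A monotone class argument extends this identity to all bounded measurable $g$.
\end{itemize}
Setting $\eta_t \coloneqq \Gamma(t,\quark;\quark)$ gives absolute continuity and \eqref{equ:Koopman_definition_and_integral_form}.

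\textbf{Higher-order derivative bounds (main obstacle).} Friedman's estimates directly cover $|\alpha|\le 2$. For $2<|\alpha|\le\ell$ I would take one of two routes.
\begin{itemize}
\item \emph{First choice (PDE).} Differentiate the backward equation $\partial_t \eta_t = L_x\eta_t$ in $x$. The derivative $\partial_x^\alpha\eta_t$ solves a parabolic equation with the same principal part, plus lower-order source terms involving derivatives of $a$ and $b$ of order at most $|\alpha|\le\ell$. These are bounded by the $C_b^\ell$ assumption. Induction on $|\alpha|$, combined with Duhamel's formula and the Gaussian convolution estimate $\int t^{-d/2}e^{-\kappa|x-z|^2/(t-s)}\,s^{-d/2}e^{-\kappa|z-y|^2/s}\,\rd z \lesssim t^{-d/2}e^{-\kappa'|x-y|^2/t}$, yields the $t^{-(|\alpha|+d)/2}$ scaling. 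At each step $\kappa$ is reduced slightly, which is harmless since only finitely many steps occur.
\item \emph{Alternative (probabilistic).} Use Malliavin calculus. Differentiate the flow $x\mapsto X_t^x$, which is $C^\ell$ with moments of all orders, and then apply integration by parts. Uniform ellipticity controls the inverse Malliavin matrix by $t^{-1}$, producing $t^{-|\alpha|/2}$. Gaussian tails come from exponential martingale estimates.
\end{itemize}

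Either route yields classical derivatives, hence weak derivatives. The constants depend only on $d$, $\kappa$, $T$, $\ell$ and the $C_b^\ell$ norms of the coefficients, so they are independent of $t\in(0,T]$, $x$ and $y$. The careful bookkeeping of how the time singularities add up under the Duhamel iteration, with constants uniform on $(0,T]$, is the step I expect to be most delicate. I would cite the established statement, for example Friedman's results for coefficients with higher regularity, for this step rather than redo it.
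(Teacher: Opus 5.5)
Your proposal is correct and follows essentially the paper's route: \citet[Theorem~5.2.1]{Oksendal2003SDE} gives the strong solution, and classical parabolic theory \citep{Frie1964} gives the fundamental solution of the generator, its identification with the transition density, and the Gaussian bounds on $\partial_x^\alpha\eta_t$; your martingale-problem and monotone-class justification of the identification is more careful than the paper, which only asserts it. For the derivatives with $|\alpha|\le\ell$ the paper simply cites \citet[Chapter~9, Theorem~7]{Frie1964}, i.e.\ exactly the ``established statement'' you fall back on, and that is the right call, because your Duhamel sketch would not close as a plain induction on $|\alpha|$: differentiating $\partial_t\eta=L_x\eta$ produces terms $\partial^\beta a_{ij}\,\partial_{ij}\partial^{\alpha-\beta}\eta$ with $|\beta|=1$, which contain derivatives of $\eta$ of order $|\alpha|+1$ and are therefore not controlled by the induction hypothesis.
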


\begin{proof}
    The proof is given in \Cref{sec:proofs_sde}.
\end{proof}

We note that the classical results from \citet{Frie1964} were originally formulated for SDEs with time-dependent coefficients $b=b(t,x)$ and $\sigma=\sigma(t,x)$. Hence, the arguments in the proof of \Cref{prop:sde_density_regularity_bounds} extend to this case, see~\Cref{sec:technical_details_proofs}. We restrict our presentation to time-homogeneous diffusions because the associated CEOs then form a Koopman semigroup $(K_t)_{t \geq 0}$ satisfying $K_{t+s} = K_t K_s$, which is fundamental for prediction by iterative application of the learned operator approximation over one time interval. 

The Gaussian derivative bounds of \Cref{prop:sde_density_regularity_bounds} are sufficient to verify the density regularity condition
$\eta_t \in L^2(\nu_{\cY}; \cH)$
from \Cref{assump:general_Radon_Nikodym_setting}. The remaining issue is the choice of the Hilbert space $\cH$ on the conditioning domain $\cX$. As the following theorem shows, this naturally splits into two cases, cf.\ \Cref{rem:rate_bounded_vs_unbounded}:
\begin{enumerate}[label=(\Alph*)]
    \item If $\cX \subset \bR^d$ is a bounded Lipschitz domain, one may take $\cH \simeq H^\ell(\cX)$.
    \item If $\cX = \bR^d$, one instead works with a reweighted Sobolev-type RKHS $\cH = \cH_{k_\tau}$, where $k_{\tau}(x,x') = m_\tau(x)^{-1}\, k_0(x,x')\, m_\tau(x')^{-1}$ with $\cH_{k_{0}} \simeq H^\ell(\cX)$ and weight $m_\tau(x) = \exp\big(-\tfrac{\tau}{2}\norm{x}_{\Gamma}^{2}\big)$, cf.~\eqref{equation:Gaussian_reweighted_kernel}.
\end{enumerate}

\begin{theorem}[Regularity of Koopman operators for uniformly elliptic SDEs]
\label{thm:sde_density_koopman_unified}
    Let $\cX \subseteq \bR^{d}$ with $\nu_{\cX} \coloneqq \bslambda$,
    let $(\cY,\nu_{\cY}) = (\bR^{d},\bslambda)$,
    and let \Cref{ass:sde_unif_elliptic_regular} hold with $\ell \in \bN_{\geq 1}$.
    Let $\eta_t$ denote the transition density and $K_{t}$ the Koopman operator \eqref{equ:Koopman_definition_and_integral_form} associated with the SDE \eqref{eq:basic_sde_time_homogeneous} at time $t > 0$.    
    Then
    \[
    \eta_{t} \in L^2(\nu_\cY;\cH),
    \qquad
	K_{t} \in \HS(L^2(\nu_\cY),\cH)
    \quad \text{with} \quad
    \norm{K_{t}}_{\HS(L^2(\nu_\cY),\cH)} = \norm{\eta_{t}}_{L^2(\nu_\cY;\cH)},
    \]
    if either of the following holds:
    \begin{enumerate}[label=(\Alph*)]
        \item
        \label{item:thm_sde_koopman_bounded}
        \textbf{bounded domain, Sobolev RKHS:}
        $\cX \subset \bR^d$ is a bounded Lipschitz domain and $\cH \simeq H^\ell(\cX)$. In this case we additionally have
        \[
        K_{t} \in \HS( L^2(\nu_{\cY}) , L^2(\nu_{\cX}) )
        \qquad \text{ and }\qquad        
        K_{t} \in \Tr( L^2(\nu_{\cY}) , L^2(\nu_{\cX}) )
        \ \text{ if }\ \ell > d/2;
        \]
        
        \item
        \label{item:thm_sde_koopman_unbounded}
        \textbf{unbounded domain, reweighted Sobolev RKHS:}
        $\cX = \bR^d$, $k_0$ is a kernel on
    	$\bR^d$ with $\cH_{k_0} \simeq H^\ell(\bR^d)$, and $\cH = \cH_{k_{\cX}}$ where $k_{\cX} = k_\tau$ is the Gaussian-reweighted kernel
    	\eqref{equation:Gaussian_reweighted_kernel} for $\tau > 0$ and symmetric, strictly positive definite $\Gamma \in \bR^{d\times d}$.
    \end{enumerate}
\end{theorem}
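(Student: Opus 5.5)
The plan is to verify \Cref{assump:general_Radon_Nikodym_setting} for $\eta_t$ directly from the Gaussian derivative bounds of \Cref{prop:sde_density_regularity_bounds}. The Hilbert--Schmidt statement and the norm identity then follow from \Cref{thm:CEO_bounds_various_spaces}\ref{item:CEO_H_HS}. In case \ref{item:thm_sde_koopman_bounded}, the additional statements $K_t \in \HS(L^2(\nu_\cY),L^2(\nu_\cX))$ and, for $\ell>d/2$, $K_t\in\Tr(L^2(\nu_\cY),L^2(\nu_\cX))$ follow from \Cref{thm:CEO_bounds_various_spaces}\ref{item:CEO_L2_HS}--\ref{item:CEO_L2_Tr}. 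Measurability of $y\mapsto \eta_t(\quark,y)\in\cH$ is obtained via Pettis' theorem, since $\cH$ is separable and joint measurability of $\eta_t$ gives weak measurability; I will treat this as routine. The core task is therefore to show $\int_{\bR^d}\norm{\eta_t(\quark,y)}_{\cH}^2\,\rd y<\infty$.

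\textbf{Case (A).} By norm equivalence $\cH\simeq H^\ell(\cX)$ with $\ell\in\bN$, it suffices to bound $\sum_{|\alpha|\le\ell}\int_{\bR^d}\int_\cX |\partial_x^\alpha\eta_t(x,y)|^2\,\rd x\,\rd y$. Inserting the bound from \Cref{prop:sde_density_regularity_bounds} gives, for each $\alpha$,
\[
\int_\cX\int_{\bR^d} C_\alpha^2 t^{-(|\alpha|+d)}\exp\big(-2\kappa\norm{x-y}_2^2/t\big)\,\rd y\,\rd x = C_\alpha^2 t^{-(|\alpha|+d)}\,(\pi t/(2\kappa))^{d/2}\,\bslambda(\cX)<\infty .
\]
Here Tonelli is used to swap the integrals, and the inner Gaussian integral is independent of $x$. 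Boundedness of $\cX$ is used only in the factor $\bslambda(\cX)$.

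\textbf{Case (B).} By \Cref{lemma:reweighted_kernel}, $\norm{\eta_t(\quark,y)}_{\cH_{k_\tau}}=\norm{m_\tau\,\eta_t(\quark,y)}_{\cH_{k_0}}\lesssim\norm{m_\tau\,\eta_t(\quark,y)}_{H^\ell(\bR^d)}$. I expand $\partial^\alpha(m_\tau\eta_t)$ by the Leibniz rule. Each derivative $\partial^\beta m_\tau$ equals $m_\tau$ times a polynomial of degree $|\beta|$, so $|\partial^\beta m_\tau(x)|\le C(1+\norm{x}_2)^{|\beta|}e^{-\tfrac{\tau}{2}\norm{x}_\Gamma^2}$. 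Squaring and using $(\sum a_i)^2\lesssim\sum a_i^2$ with the derivative bounds, the quantity to control is
\[
\int_{\bR^d}\int_{\bR^d}(1+\norm{x}_2)^{2\ell}e^{-\tau\norm{x}_\Gamma^2}\,t^{-(|\gamma|+d)}e^{-2\kappa\norm{x-y}_2^2/t}\,\rd y\,\rd x .
\]
Integrating first in $y$ gives a constant times $t^{-(|\gamma|+d)+d/2}$. The remaining $x$-integral is finite because the polynomial factor is dominated by the Gaussian weight $e^{-\tau\norm{x}_\Gamma^2}$, which is integrable since $\Gamma$ is positive definite and $\tau>0$. This also explains why the reweighting is needed: without it the $x$-integral diverges, as in the footnote to \Cref{thm:kRR_well_specified_homoskedastic}.

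\textbf{Main obstacle.} The step needing most care is justifying the Leibniz rule and the identification of Sobolev norms on the weak derivatives. The estimates above only show that $m_\tau\eta_t(\quark,y)$ and its weak derivatives lie in $L^2$, which then gives $H^\ell$ membership. For this I rely on \Cref{prop:sde_density_regularity_bounds} providing weak derivatives up to order $\ell$ for every $y$, together with smoothness of $m_\tau$; the product rule for a smooth function times a $W^{\ell,1}_{\mathrm{loc}}$ function is standard. For $\ell\in\bN$ I use that the Fourier-defined $H^\ell$ norm coincides with the $W^{\ell,2}$ norm (stated in \Cref{sec:setup}). In case (A) I additionally use that the restriction norm on the Lipschitz domain is equivalent to the intrinsic $W^{\ell,2}(\cX)$ norm, via a Stein extension operator.
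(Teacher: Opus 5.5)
Your proposal is correct and follows the paper's proof. In case (A) you use the Gaussian derivative bounds of \Cref{prop:sde_density_regularity_bounds} together with Tonelli; in case (B) you use \Cref{lemma:reweighted_kernel} with the Leibniz rule; and in both cases you conclude via \Cref{thm:CEO_bounds_various_spaces}. The only deviation is in case (B), where you integrate in $y$ first, so finiteness follows directly from the integrable weight $(1+\norm{x}_2)^{2\ell}e^{-\tau\norm{x}_\Gamma^2}$; the paper instead completes the square to get a pointwise Gaussian decay bound on $\norm{m_\tau\eta_t(\quark,y)}_{H^\ell(\bR^d)}^2$ in $y$ before integrating. Your ordering is shorter and avoids tracking the polynomial-in-$y$ factor, while the paper's pointwise bound additionally gives $\eta_t\in L^q(\nu_\cY;\cH)$ for every $q\in[1,\infty]$.
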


\begin{proof}
    The proof is given in \Cref{sec:proofs_sde}. 
\end{proof}

By \Cref{thm:sde_density_koopman_unified}, under sufficiently strong regularity assumptions on the coefficients of the SDE, the abstract results of \Cref{sec:consequences_density_regularity} immediately yield Hilbert--Schmidt properties of the associated stochastic Koopman operators. The only distinction between the bounded and unbounded settings lies in the choice of the target Hilbert space: bounded domains naturally lead to Sobolev spaces, whereas on $\bR^d$ exponential reweighting compensates for the lack of spatial integrability. In the former case, the results of \Cref{sec:error_bound_implications} immediately become available.

\begin{corollary}[Error rates for Galerkin-type approximations of SDE Koopman operators]
\label{cor:sde_koopman_one_sided_error_rate}
Let $\cX \subset \bR^d$ be Lipschitz bounded, let \Cref{ass:sde_unif_elliptic_regular} hold with $\bN \ni \ell > d/2$ and let $\cH \simeq H^{\ell}(\cX)$ be an RKHS induced by (cf.\ \Cref{prop:matern_wendland_sobolev})
\begin{enumerate}[label = (\roman*)]
\item 
the Mat\'ern kernel with parameter $\beta = \ell - d/2$, \textcolor{darkgreen}{where we assume $\ell > d/2 + 1$} and set $m = \lceil \beta \rceil - 1$,
or
\item 
the Wendland kernel with parameter $j = \ell - (d+1)/2$, where we assume $d$ to be odd and $d \geq 3$ if $j = 0$, and set $m = j + 1/2$.
\end{enumerate}
Using \Cref{notation:CEO_approximations}, we obtain for $X = X_0$, $Y = X_t$ and a data set $\bsX = \{x_i\}_{i=1}^n$,
\begin{align}
\label{eq:sde_one_sided_proj_fd_bound}
    \norm{K_t - \cP_\bsX K_t}_{\cL(L^2(\nu_\cY), C_b(\cX))} \leq C \, \norm{\eta_{t}}_{L^2(\nu_{\cY};\cH)} \, \fd_{\bsX}^{m}.
\end{align}  
\end{corollary}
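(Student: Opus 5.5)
The corollary combines \Cref{thm:sde_density_koopman_unified}\ref{item:thm_sde_koopman_bounded} with the one-sided bound of \Cref{thm:err_bounds_proj_unified}\ref{item:thm_err_bounds_proj_unified_RBF}--\ref{item:thm_err_bounds_proj_unified_wendland}, with $\cF = C_b(\cX)$. The plan has three steps.

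\textbf{Step 1: density regularity.} Since $\cX$ is Lipschitz bounded and \Cref{ass:sde_unif_elliptic_regular} holds with $\ell \in \bN$, \Cref{thm:sde_density_koopman_unified}\ref{item:thm_sde_koopman_bounded} gives $\eta_t \in L^2(\nu_\cY;\cH)$ for every $\cH \simeq H^\ell(\cX)$. This verifies \Cref{assump:general_Radon_Nikodym_setting} with $X = X_0$, $Y = X_t$, $\nu_\cY = \bslambda$, and $\cH$ the restricted Matérn or Wendland RKHS $\cH_{k_\cX}(\cX)$.

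\textbf{Step 2: identify the RKHS and check the kernel hypotheses.} By \Cref{prop:matern_wendland_sobolev}, the Matérn choice $\beta = \ell - d/2 > 0$ gives $\cH_{k_\beta^{\mathrm{Mat}}}(\cX) \simeq H^{\beta + d/2}(\cX) = H^\ell(\cX)$. The Wendland choice $j = \ell - (d+1)/2$ is a nonnegative integer precisely because $d$ is odd and $\ell > d/2$; it gives $\cH_{k^{\mathrm{Wen}}_{d,j}}(\cX) \simeq H^{j + (d+1)/2}(\cX) = H^\ell(\cX)$. Both kernels are continuous radial basis functions, hence measurable. The RKHS is separable because a continuous kernel on a separable metric space has a separable RKHS. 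So \Cref{ass:kernel_rv_pair}\ref{item:basic_assumptions_kernel} holds.

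In addition, $\ell > d/2$ gives the continuous embedding $H^\ell(\cX) \hookrightarrow C_b(\cX)$, so $\cF = C_b(\cX)$ is an admissible target in \Cref{thm:err_bounds_proj_unified}. Part \ref{item:thm_err_bounds_proj_unified_general_one_sided} of that theorem then yields
\[
\norm{K_t - \cP_\bsX K_t}_{\cL(L^2(\nu_\cY),C_b(\cX))} \le \norm{\eta_t}_{L^2(\nu_\cY;\cH)}\, \norm{\Id - \cP_\bsX}_{\cL(\cH, C_b(\cX))}.
\]
No condition on $\cY$ or on $k_\cY$ is needed, since only the one-sided projection appears.

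\textbf{Step 3: fill-distance rate.} For the Matérn kernel, \Cref{prop:matern_wendland_sobolev} gives $k_\beta^{\mathrm{Mat}} \in C^{\lceil 2\beta - 1\rceil}$. The proof of \Cref{thm:err_bounds_proj_unified}\ref{item:thm_err_bounds_proj_unified_RBF} shows $\lceil 2\beta - 1\rceil \ge 2m$ with $m = \lceil\beta\rceil - 1$. The derivatives of order $2m$ are continuous on all of $\bR^d \times \bR^d$, so they extend continuously to $\overline\cX \times \overline\cX$. The assumption $\ell > d/2 + 1$ means $\beta > 1$, so $m \ge 1$ and the bound is a genuine rate. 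Hence \eqref{equ:one_sided_rbf_bound_unified} gives \eqref{eq:sde_one_sided_proj_fd_bound}. For the Wendland kernel, part \ref{item:thm_err_bounds_proj_unified_wendland} replaces the exponent by $m = j + 1/2$.

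\textbf{Main obstacle.} The only real difficulty is bookkeeping: the RKHS norm and the $H^\ell$ norm are only equivalent, so the equivalence constants must be absorbed into $C$. It is cleanest to take $\cH$ to be the RKHS itself. Then $\norm{\eta_t}_{L^2(\nu_\cY;\cH)}$ is finite by Step 1 together with the norm equivalence, and all remaining constants come from the interpolation estimate \citep[Theorems~11.13 and~11.17]{Wendland2005Scattered}, which depend only on $\cX$ and the kernel.
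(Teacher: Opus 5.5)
Your proposal is correct and follows essentially the same route as the paper: use \Cref{thm:sde_density_koopman_unified}\ref{item:thm_sde_koopman_bounded} for the density regularity, identify the Mat\'ern or Wendland RKHS with $H^\ell(\cX)$ via \Cref{prop:matern_wendland_sobolev}, and then apply the one-sided bounds of \Cref{thm:err_bounds_proj_unified}\ref{item:thm_err_bounds_proj_unified_RBF}--\ref{item:thm_err_bounds_proj_unified_wendland}. Your extra checks only make explicit what the paper's short proof leaves implicit: measurability and separability of the kernel, the embedding into $C_b(\cX)$, the derivative count $\lceil 2\beta-1\rceil\ge 2m$, and taking $\cH$ to be the RKHS itself rather than $H^\ell(\cX)$.
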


\begin{proof}
    The proof is given in \Cref{sec:proofs_sde}.
\end{proof}

\Cref{cor:sde_koopman_one_sided_error_rate} distinguishes between Mat\'ern kernels, which induce $\cH \simeq H^\ell(\cX)$ for arbitrary smoothness levels $\ell > d/2$, and Wendland kernels, which can only realize Sobolev spaces of integer and half-integer smoothness. In turn, whenever applicable, Wendland kernels yield a slightly sharper fill distance rate. 

The analysis in this subsection focuses on uniformly elliptic SDEs (cf.\ \Cref{ass:sde_unif_elliptic_regular}) with Lipschitz bounded $\cX$ and $\cY = \bR^d$, leading naturally to the one-sided Galerkin approximation considered in \Cref{cor:sde_koopman_one_sided_error_rate}.
This raises the question of how the CEO regularity theory developed in this work extends to broader classes of stochastic dynamics and to additional approximation schemes.

\begin{remark}[Extensions of CEO regularity and approximation results for SDEs] \label{rem:extensions_unified}
    Several additional results from \Cref{sec:CME_and_abstract_results,sec:error_bound_implications} may admit analogous applications in the SDE setting.
    \begin{itemize}
        \item \textbf{Error rates for two-sided Galerkin-type approximations}. The two-sided projection estimates from \Cref{thm:err_bounds_proj_unified} are not directly applicable in our present SDE setting, since they require the state space $\cY$ to be a bounded Lipschitz domain to obtain a fill distance bound on the domain of the Koopman operator. For uniformly elliptic SDEs on $\bR^d$, however, the transition law typically has unbounded support. A possible remedy could be to extend the present analysis, in particular the transition density regularity analysis, to diffusions confined to bounded domains, e.g., by considering reflected diffusions or suitably modified coefficients. 
        \item
        \textbf{CME representation and approximation bounds.} For any RKHS $\cH_{k_\cY} \hookrightarrow L^2(\bslambda)$, in both cases of \Cref{thm:sde_density_koopman_unified} the assumptions required for the covariance-operator representation of conditional mean embeddings (\Cref{cor:regularity_conditional_expectation_operator_CME}) are satisfied. 
        Consequently, the corresponding CME-based approximation and convergence results from \Cref{subsubsec:cme_approx} apply equally to stochastic Koopman operators associated with uniformly elliptic SDEs, provided the assumptions for the CME convergence rate (\Cref{prop:cme_learning_rate}) are met. However, the latter requires a bounded kernel, which only holds in case \ref{item:thm_sde_koopman_bounded} but fails in case \ref{item:thm_sde_koopman_unbounded} of \Cref{thm:sde_density_koopman_unified}, cf.\ \Cref{rem:rate_bounded_vs_unbounded}.
        A detailed investigation of the CME-based approximation results lies beyond the scope of the present work.
    \end{itemize}
    Beyond the presented uniformly elliptic setting, relaxed or modified assumptions on the SDE coefficients might provide notable extensions to the CEO regularity and approximation results of this subsection.
    \begin{itemize}
        \item \textbf{Relaxing uniform ellipticity.} Smooth transition densities can also be obtained under substantially weaker non-degeneracy conditions, most notably Hörmander's bracket condition via Malliavin calculus \citep{Mall1978:stoch_calc_hypoelliptic, KusuokaStroock1985:malliavin_part_ii,Hair2011}, or through differentiability properties of the associated stochastic flow \citep{Stro1981:malliavin_part_1,Kuni1990}. In the case of state space $\cY = \bR^d$, the key question then becomes the availability of quantitative (e.g., Gaussian-type) bounds for spatial derivatives as in \Cref{prop:sde_density_regularity_bounds}.  
        \item \textbf{Relaxing regularity and boundedness of coefficients.} Gaussian bounds for transition densities of uniform elliptic diffusions were originally proven for bounded measurable coefficients \citep{Aron1967} and have recently been extended to settings with linearly growing, unbounded drift by \citet{MenoPescZhan2021:density_sde_estimates}, while still preserving on uniform ellipticity. Moreover, under suitable Hölder regularity assumptions on the coefficients, the latter work also derives Gaussian estimates for arbitrary spatial derivatives of the transition density. 
        Extending the Sobolev regularity analysis developed here to this broader setting would substantially enlarge the class of stochastic Koopman operators covered by our framework. In particular, it would cover benchmark models such as the Ornstein--Uhlenbeck process, whose drift is linear and therefore falls outside \Cref{ass:sde_unif_elliptic_regular}, despite the process admitting an explicit Gaussian density satisfying the required derivative estimates, see~\Cref{ex:ou_sde_density_hl}.
    \end{itemize}
\end{remark}

\begin{example}[Ornstein--Uhlenbeck process: Explicit density under unbounded drift]
\label{ex:ou_sde_density_hl}

On a Lipschitz bounded initial domain $\cX \subset \bR^d$ and $\cY = \bR^d$, we consider an Ornstein--Uhlenbeck process, i.e., the solution to the linear SDE
\begin{align}\label{eq:ornstein_uhlenbeck_sde}
   \rd X_t = A X_t \,\rd t + B \, \rd W_t,
\end{align}
with constant matrices $A, B \in \bR^{d \times d}$. Since the drift function $b: \bR^d \to \bR^d, b(x) = Ax$ is clearly unbounded, we cannot directly apply \Cref{thm:sde_density_koopman_unified}\ref{item:thm_sde_koopman_bounded}. However, the solution process is Gaussian and satisfies $(X_t  \, | \, X_0 = x) \sim \cN(e^{tA}x, Q_t)$, where $Q_t = \int_0^t e^{sA} B B^\top e^{s A^\top} \rd s$, see, e.g., \citep[Corollary 1]{Vati2019}. Assume that $B B^\top > 0$, such that $Q_t$ is positive definite (and the uniform ellipticity condition \Cref{ass:sde_unif_elliptic_regular}\ref{item:sde_unif_ellipt_final} is satisfied) and we obtain the explicit transition density $\eta_t(x,y) = (2\pi)^{-d/2}\det(Q_t)^{-1/2} \exp\left( -\frac{1}{2}(y - e^{tA}x)^\top Q_t^{-1}(y - e^{tA}x) \right)$. 
Hence, the Koopman operator $K_t g(x) = \bE[g(X_t) \, | \, X_0 = x] = \int_{\bR^d} g(y) \, \eta_t(x,y) \, \rd y$ is well-defined and bounded.

We now verify the Sobolev-type integrability directly as in the proof of \Cref{thm:sde_density_koopman_unified}. Since the exponent is quadratic in $(x,y)$, every $x$-derivative yields a polynomial prefactor, i.e., $\partial_x^\alpha \eta_t(x,y) = f_\alpha(x,y)\,\eta_t(x,y)$, where $f_\alpha$ is a polynomial of degree at most $|\alpha|$. Consequently,
\begin{align*}
    |\partial_x^\alpha \eta_t(x,y)|^2
    \le C_\alpha \big(1 + \|x\|^{2|\alpha|} + \|y\|^{2|\alpha|}\big)\,\eta_t(x,y)^2 \leq C \big(1 + \|x\|^{2|\alpha|} + \|y\|^{2|\alpha|}\big) e^{-c\|y - e^{tA}x\|^2}
\end{align*}
with suitable constants $C_\alpha, C, c > 0$, where we used the explicit Gaussian form of $\eta_t$. Hence, for fixed $x$ and with the change of variables $z=y-e^{tA}x$,
\begin{align*}
    \int_{\bR^d} |\partial_x^\alpha \eta_t(x,y)|^2\, \rd y
    &\le C \int_{\bR^d}
    \big(1 + \|x\|^{2|\alpha|} + \|y\|^{2|\alpha|}\big)
    e^{-c\|y - e^{tA}x\|^2} \rd y \\
    &=
    C \int_{\bR^d}
    \big(1 + \|x\|^{2|\alpha|} + \|z + e^{tA}x\|^{2|\alpha|}\big)
    e^{-c\|z\|^2} \rd z \le \widetilde{C} (1 + \|x\|^{2|\alpha|})
\end{align*}
with $\widetilde{C} >0$, where we used finiteness of Gaussian moments. Finally, integrating over the bounded domain $\cX$ yields $\int_{\cX} \int_{\bR^d} |\partial_x^\alpha \eta_T(x,y)|^2\,dy\,dx < \infty$ for all $|\alpha|\le \ell$. This proves $\eta_t \in L^2(\nu_{\cY}; H^\ell(\cX))$ for $\nu_\cY = \bslambda$ and arbitrary $\ell \in \bN$. In particular, the $C^\infty$-density means that the Koopman operator $K_t$ associated with the SDE is `arbitrarily smoothing'.
\end{example}

\section{Conclusions}\label{sec:conclusions}

In this work, we traced the regularizing effect of conditional expectation operators (CEOs) back to the regularity of the associated conditional densities. Our main result establishes a simple, verifiable regularity criterion: if the conditional density belongs to a suitable Bochner space $L^2(\nu_{\cY};\cH)$, then the corresponding CEO defines a bounded, and in fact Hilbert--Schmidt, operator from $L^2(\nu_{\cY})$ into $\cH$. This provides a unified perspective on regularity, compactness, and smoothing properties of CEOs.

Beyond its operator-theoretic interest, this criterion has two immediate consequences. On the one hand, it provides a verifiable sufficient condition for the representability assumptions underlying conditional mean embeddings. Rather than assuming a priori that conditional expectations take values in a prescribed RKHS, we show that this property follows from regularity of the conditional density, justifying operator-based CME representations through analytical properties of the underlying probabilistic model.
On the other hand, the regularity criterion yields quantitative approximation results for CEOs. In particular, it provides error bounds for Galerkin-type and CME-based approximations whose convergence rates are determined by the smoothing properties of the underlying conditional density. Since stochastic Koopman operators are themselves CEOs, these results apply directly in the setting of stochastic dynamical systems.

To demonstrate the practical verifiability of the proposed abstract regularity criterion, we established it in three representative applications. In nonparametric regression, smoothness assumptions on the regression function and noise model imply the required density regularity and thereby justify the operator-based CME representation. For Bayesian inverse problems, we derived sufficient conditions ensuring that posterior densities induce regularization into weighted Sobolev-type RKHSs. For uniformly elliptic stochastic differential equations, classical transition density regularity results and derivative bounds yield smoothing properties of the associated stochastic Koopman operators.

Overall, the results show that operator-theoretic properties of CEOs can be reduced to verifiable regularity properties of the associated conditional densities. By replacing abstract RKHS representability assumptions with analytical conditions that can be checked in concrete probabilistic models, the proposed framework unifies conditional mean embeddings and stochastic Koopman theory across many application domains and provides a natural bridge between probability theory, operator theory, and numerical approximation.

\bibliographystyle{abbrvnat}
\bibliography{bibliography_arxiv}
\addcontentsline{toc}{section}{References}

\appendix

\section{Proofs and Auxiliary Results}
\label{sec:technical_details_proofs}

In this section, we state the detailed proofs of our main results. Therein, we will make use of the following basic inequalities for arbitrary $a,b,\gamma > 0$:
\begin{align}
\label{equ:technical_inequality_splitting_1}
(a+b)^{2} = a^{2} + 2 \gamma^{1/2} a \gamma^{-1/2} b + b^{2}
&\leq
a^{2} + \gamma a^{2} + \gamma^{-1} b^{2} + b^{2}
=
(1+\gamma) a^{2} + (1 + \gamma^{-1}) b^{2},
\\
\label{equ:technical_inequality_splitting_2}
2 a b + b^2
&\leq
\gamma a^{2} + (1 + \gamma^{-1}) b^{2},
\end{align}
where the second one follows directly from the first.

\begin{proof}[Proof of \Cref{prop:rkhs_in_L2_via_spectral_density}]
    By Bochner's theorem \citep[Theorem~6.6]{Wendland2005Scattered}, $\widehat\psi \geq 0$ is nonnegative.
By \citep[Theorem~10.12]{Wendland2005Scattered},
\[
\cH_k
=
\left\{
f\in C(\bR^d)\cap L^2(\bR^d)
\;\middle|\;
\int_{\bR^d}\frac{|\widehat f(\omega)|^2}{\widehat\psi(\omega)}\, \rd \omega<\infty
\right\},
\qquad
\|f\|_{\cH_k}^2
=
(2\pi)^{-d/2}
\int_{\bR^d}\frac{|\widehat f(\omega)|^2}{\widehat\psi(\omega)}\, \rd \omega .
\]
Hence, using Plancherel's theorem and $\widehat\psi\in L^\infty(\bR^d)$,
\[
\|f\|_{L^2(\bR^d)}^2
=
\|\widehat f\|_{L^2(\bR^d)}^2
=
\int_{\bR^d}
\widehat\psi(\omega)\,
\frac{|\widehat f(\omega)|^2}{\widehat\psi(\omega)}\, \rd \omega
\le
(2\pi)^{d/2}\|\widehat\psi\|_{L^\infty(\bR^d)}\,\|f\|_{\cH_k}^2 .
\]
This proves the continuous embedding $\cH_k \hookrightarrow L^2(\bR^d)$.
For the domain version, let $g\in\cH_k(\cX)$.
By formula \eqref{equ:Wendland_restriction_RKHS} for the restriction-space norm, for every $\varepsilon>0$
there exists $f\in\cH_k(\bR^d)$ such that $f|_{\cX}=g$ and
\[
\|f\|_{\cH_k(\bR^d)} \le \|g\|_{\cH_k(\cX)}+\varepsilon .
\]
Since $\|g\|_{L^2(\cX)}\le \|f\|_{L^2(\bR^d)}$, the embedding on $\bR^d$
yields
\[
\|g\|_{L^2(\cX)}
\le
\|f\|_{L^2(\bR^d)}
\le
(2\pi)^{d/4}\|\widehat\psi\|_{L^\infty(\bR^d)}^{1/2}
\,\|f\|_{\cH_k(\bR^d)}
\leq
(2\pi)^{d/4}\|\widehat\psi\|_{L^\infty(\bR^d)}^{1/2}
\, \big( \|g\|_{\cH_k(\cX)}+\varepsilon \big).
\]
Letting $\varepsilon\to0$ proves the claim.
\end{proof}

\subsection{\texorpdfstring{Proofs of \Cref{sec:regression}}{Proofs of Section 4.1}}\label{sec:proofs_regression}

\begin{proof}[Proof of \Cref{thm:kRR_well_specified_homoskedastic}]
	Throughout, $C > 0$ denotes a constant depending only on $d$, $\ell$,
	$\tau$, $\Gamma$, and (in case \ref{item:kRR_homosked_bounded}) on $\cX$,
	and may change from line to line.
	Since $\varepsilon$ is independent of $X$,
    the conditional density is given by
	$\eta(x,y) = p_\varepsilon\big(y - f(x)\big)$.

    \medskip

    \noindent    
	\textbf{Step 1: Derivative structure via Fa\`a di Bruno.}	
	By the multivariate Fa\`a di Bruno formula
	\citep[Theorem~2.1]{ConstantineSavits1996}, for $1 \le |\alpha| \le \ell$,
    \begin{equation}
	\label{equ:FdB_homoskedastic}
	\partial_x^{\alpha} \eta(x,y)
	=
	\sum_{s=1}^{|\alpha|} (-1)^{s}\,
	p_\varepsilon^{(s)}\big(y - f(x)\big)
    G_{\alpha,s}(x),
    \qquad
    G_{\alpha,s}(x)
    \coloneqq
	\sum_{\substack{\beta_1 + \dots + \beta_s = \alpha \\ |\beta_j| \ge 1}}
	c_{\bsbeta}
	\prod_{j=1}^{s} \partial^{\beta_j} f(x),
    \end{equation}
	with combinatorial constants $c_{\bsbeta} \ge 0$, $\bsbeta = (\beta_1,\dots,\beta_s)$.
	(Here \eqref{equ:FdB_homoskedastic} is understood as an identity of weak derivatives on $\cX$, valid for $f \in H^\ell(\cX)$ by approximation with smooth functions; since $\ell > d/2$, the products $\prod_j \partial^{\beta_j} f$ lie in $L^2(\cX)$, see \eqref{equ:Moser_homoskedastic} below.)

	\medskip

    \noindent
	\textbf{Step 2: Integration in $y$.}
	Fix $x \in \cX$. By translation invariance of the Lebesgue measure,
	\[
	\int_\bR
	\absval{p_\varepsilon^{(s)}\big(y - f(x)\big)}^2\,\rd y
	=
	\norm{p_\varepsilon^{(s)}}_{L^2(\bR)}^2
	\le
	\norm{p_\varepsilon}_{W^{\ell,2}(\bR)}^2,
	\qquad
	0 \le s \le \ell.
	\]
	Hence, applying the Cauchy--Schwarz inequality to the finite sum
	\eqref{equ:FdB_homoskedastic},
	\begin{equation}
	\label{equ:y_int_homoskedastic}
	\int_\bR \absval{\partial_x^{\alpha}\eta(x,y)}^2\,\rd y
	\le
	C \, \norm{p_\varepsilon}_{W^{\ell,2}(\bR)}^2\,
	G(x)^2,
	\qquad
	G(x)
	\coloneqq
    \max_{1 \le |\alpha| \le \ell}
	\sum_{s=1}^{|\alpha|}
    \absval{G_{\alpha,s}(x)}
	\end{equation}
	for $1 \le |\alpha| \le \ell$, while for $\alpha = 0$ the left-hand side
	equals $\norm{p_\varepsilon}_{L^2(\bR)}^2$.

	\medskip

    \noindent
	\textbf{Step 3: Integration in $x$.}
	For each tuple $(\beta_1,\dots,\beta_s)$ in
	\eqref{equ:y_int_homoskedastic} we have
	$\sum_j |\beta_j| = |\alpha| \le \ell$,
    so that the Gagliardo--Nirenberg--Moser product estimate
	\citep[see][Ch.~13, \S3]{Taylor2011PDE3}
	yields, using $f \in H^\ell(\cX) \hookrightarrow L^\infty(\cX)$
	for $\ell > d/2$,
	\begin{equation}
	\label{equ:Moser_homoskedastic}
	\bigg\| \prod_{j=1}^{s} \partial^{\beta_j} f \bigg\|_{L^2(\bR^d)}
	\le
	C \norm{f}_{L^\infty(\bR^d)}^{s-1}\norm{f}_{H^{\ell}(\bR^d)}
	\le
	C \norm{f}_{H^{\ell}(\bR^d)}^{s},
	\end{equation}
	and therefore $\norm{G}_{L^2(\cX)} \le
	C\big(1 + \norm{f}_{H^\ell(\cX)}\big)^{\ell}$ is finite, where for bounded Lipschitz domains $\cX$ we applied
	\eqref{equ:Moser_homoskedastic} to a Stein extension of $f$
	\citep[Theorem~5.24]{Adams2003}.

	\emph{Case \ref{item:kRR_homosked_bounded}.}	
	By Tonelli's theorem and \eqref{equ:y_int_homoskedastic},
	\[
	\norm{\eta}_{L^2(\nu_\cY;H^\ell(\cX))}^2
	=
	\sum_{|\alpha| \le \ell}
	\int_{\cX}\int_\bR
	\absval{\partial_x^{\alpha}\eta(x,y)}^2\,\rd y\,\rd x
	\le
	\bslambda(\cX)\norm{p_\varepsilon}_{L^2(\bR)}^2
	+
	C \, \norm{p_\varepsilon}_{W^{\ell,2}(\bR)}^2
	\norm{G}_{L^2(\cX)}^{2}
    <
    \infty,
	\]    
	where the first summand is the $\alpha = 0$ term.
	By norm-equivalence $\cH \simeq H^\ell(\cX)$,
	\Cref{assump:general_Radon_Nikodym_setting} holds, and the claim follows from
	\Cref{thm:CEO_bounds_various_spaces}\ref{item:CEO_H_HS} and~\ref{item:CEO_L2_Tr}.

	\emph{Case \ref{item:kRR_homosked_Rd}.}
	By \Cref{lemma:reweighted_kernel} and $\cH_{k_0} \simeq H^\ell(\bR^d)$,
	\[
	\norm{\eta(\quark,y)}_{\cH_{k_\tau}}
	=
	\norm{m_\tau\,\eta(\quark,y)}_{\cH_{k_0}}
	\asymp
	\norm{m_\tau\,\eta(\quark,y)}_{H^\ell(\bR^d)},
	\qquad
	m_\tau(x) = \exp\big(-\tfrac{\tau}{2}\norm{x}_\Gamma^2\big).
	\]
	Since every derivative of $m_\tau$ is a polynomial times the same
	Gaussian weight, $\absval{\partial^{\gamma} m_\tau(x)}
	\le C (1+\norm{x}_2)^{|\gamma|}\, m_\tau(x)$ for $|\gamma| \le \ell$.
	Hence, by the Leibniz rule and \eqref{equ:y_int_homoskedastic},
	for every $|\alpha| \le \ell$,
	\[
	\int_\bR
	\absval{\partial_x^{\alpha}\big(m_\tau\,\eta\big)(x,y)}^2\,\rd y
	\le
	C \norm{p_\varepsilon}_{W^{\ell,2}(\bR)}^2\,
	(1+\norm{x}_2)^{2\ell}\, m_\tau(x)^2\,
	\big(1 + G(x)\big)^2.
	\]
	Since $G \in L^2(\bR^d)$ by \eqref{equ:Moser_homoskedastic} and $w_\tau(x) \coloneqq (1+\norm{x}_2)^{2\ell} \, m_\tau(x)^2$ satisfies
	$w_\tau \in L^1(\bR^d) \cap L^\infty(\bR^d)$, we conclude with Tonelli's
	theorem that
	\[
	\norm{\eta}_{L^2(\nu_\cY;\cH_{k_\tau})}^2
	\le
	C \, \norm{p_\varepsilon}_{W^{\ell,2}(\bR)}^2
	\int_{\bR^d} w_\tau(x)\big(1+G(x)\big)^2\,\rd x
	\le
	C \, \norm{p_\varepsilon}_{W^{\ell,2}(\bR)}^2
	\big( \norm{w_\tau}_{L^1} + \norm{w_\tau}_{L^\infty}\norm{G}_{L^2}^2 \big)
	\]
    is finite.
	\Cref{assump:general_Radon_Nikodym_setting} thus holds with
	$\cH = \cH_{k_\tau}$, and the claim again follows from \Cref{thm:CEO_bounds_various_spaces}\ref{item:CEO_H_HS}.
\end{proof}

\begin{proof}[Proof of \Cref{cor:kRR_CME_representation}]
	We first verify that $(X,k_\cX)$ satisfies \Cref{ass:kernel_rv_pair};
	condition \ref{item:basic_assumptions_ae_separation} holds by assumption
	in both cases, so it remains to check
	\ref{item:basic_assumptions_kernel} and
	\ref{item:basic_assumptions_feature_map_L2}.

	\emph{Case \ref{item:kRR_homosked_bounded}.}
	Since $k_\cX$ is measurable, symmetric, and positive definite as a
	kernel, and $\cH_{k_\cX} \simeq H^\ell(\cX)$ is separable,
	\ref{item:basic_assumptions_kernel} holds.
	For \ref{item:basic_assumptions_feature_map_L2}, note that since
	$\ell > d/2$, the Sobolev embedding theorem gives
	$H^\ell(\cX) \hookrightarrow C_b(\cX)$ continuously
	\citep[Theorem~4.12]{Adams2003}; composed with the norm-equivalence
	$\cH_{k_\cX} \simeq H^\ell(\cX)$, this yields a bounded embedding
	$\cH_{k_\cX} \hookrightarrow C_b(\cX)$ with some operator norm $C > 0$.
	Evaluating at the feature vector $\phi_\cX(x) = k_\cX(x,\quark)$ and
	using the reproducing property gives
	\[
	k_\cX(x,x)
	\leq
	\norm{\phi_\cX(x)}_{\infty}
	\leq
	C \norm{\phi_\cX(x)}_{\cH_{k_\cX}}
	=
	C \sqrt{k_\cX(x,x)},
	\]
	hence $\bE[k_\cX(X,X)] \leq \sup_{x \in \cX} k_\cX(x,x) \leq C^2 < \infty$ regardless of the law of $X$,
	proving \ref{item:basic_assumptions_feature_map_L2}.

	\emph{Case \ref{item:kRR_homosked_Rd}.}
	Since $k_0$ is continuous and
	$m_\tau(x) = \exp(-\tfrac{\tau}{2}\norm{x}_\Gamma^2) > 0$ is continuous,
	$k_\tau$ is continuous, hence measurable. Symmetry and positive
	definiteness of $k_\tau$ follow from \Cref{lemma:reweighted_kernel},
	which also shows that the map $f \mapsto m_\tau f$ is an isometry from
	$\cH_{k_\tau}$ onto $\cH_{k_0} \simeq H^\ell(\bR^d)$; since
	$H^\ell(\bR^d)$ is separable, so is $\cH_{k_\tau}$, proving
	\ref{item:basic_assumptions_kernel}.
	For \ref{item:basic_assumptions_feature_map_L2}, translation invariance
	of $k_0$ implies that $c_0 \coloneqq k_0(x,x)$ is constant, so that, by \eqref{equ:regression_exp_moment_X},
	\[
	\bE\big[k_\cX(X,X)\big]
	=
	\bE\big[m_\tau(X)^{-2}\, k_0(X,X)\big]
	=
	c_0\, \bE\big[\exp\big(\tau\norm{X}_\Gamma^2\big)\big]
	< \infty.
	\]
	In both cases, \Cref{assump:general_Radon_Nikodym_setting} holds with
	$\cH = \cH_{k_\cX}$ by \Cref{thm:kRR_well_specified_homoskedastic}, and
	$\cH_{k_\cY} \hookrightarrow L^2(\nu_\cY)$ continuously, so
	\Cref{cor:regularity_conditional_expectation_operator_CME} proves
	\ref{item:kRR_CME_representation_HS}.
	In case \ref{item:kRR_homosked_bounded}, the boundedness
	$\sup_{x \in \cX} k_\cX(x,x) < \infty$ established above allows the
	application of \Cref{prop:cme_learning_rate}, which yields
	\ref{item:kRR_CME_representation_rate}.
\end{proof}

\subsection{\texorpdfstring{Proofs of \Cref{sec:Bayesian_IP}}{Proofs of Section 4.2}} \label{sec:proofs_bip}

\begin{proof}[Proof of \Cref{thm:unbounded_F_weighted_sobolev}]
	Throughout the proof, positive constants of the form
	$C_{\mathrm{index}}$, $C_{\mathrm{index}}'$, $C_{\mathrm{index}}''$
	are chosen as needed from the context without further comment and may depend on the parameters indicated in the subscript.	
	
	\medskip

    \noindent
    \textbf{Step 1: Gaussian envelope for $u_{y}\coloneqq m_\tau(\quark)\eta(\quark,y)$.}
	Since $F(Y)$ is finite $P$-almost surely, there exists $R>0$ such that
	\[
	\pi_R \coloneqq P\big(\norm{F(Y)}_\Gamma\le R\big)>0.
	\]
	Fix $0<\delta_{1}<\tau$. Restricting the integral defining $\Xi(x)$ to the event
	$\{\norm{F(Y)}_\Gamma\le R\}$ yields
	\[
	\Xi(x)
	\ge
	\pi_R
	\exp\big(
	-R\norm{x}_\Gamma-\tfrac12 R^2
	\big)
	\geq
	\pi_R
	\exp \big( -\tfrac{1}{2} \big( \delta_{1}\norm{x}_\Gamma^2 + (1+\delta_{1}^{-1}) R^2\big)\big),
	\]
	where we used \eqref{equ:technical_inequality_splitting_2} with $a=\norm{x}_\Gamma$,
	$b=R$ and $\gamma=\delta_{1}$.
	Consequently,
	\begin{equation}
	\label{eq:uy_pointwise_bound_general}
	\begin{split}
	u_y(x)
	\coloneqq
	m_\tau(x)\eta(x,y)
	&\le
	C_{\delta_{1}}\,
	\exp\!\Big(
	-\frac{\tau-\delta_{1}}{2}\norm{x}_\Gamma^2
	+
	\innerprod{x}{F(y)}_{\Gamma}
	-
	\frac12 \norm{F(y)}_\Gamma^2
	\Big)
	\\
	&=
	C_{\delta_{1}}\,
	\exp\big(
	a_{\tau,\delta_{1}}
	\norm{F(y)}_\Gamma^2
	\big)
	\exp \big(
	-\tfrac{\tau-\delta_{1}}{2}
	\norm{
		x-b_{\tau,\delta_{1}}(y)
	}_\Gamma^2
	\big),
	\end{split}
	\end{equation}
	with $a_{\tau,\delta_{1}}
	\coloneqq
	\frac{1+\delta_{1}-\tau}{2(\tau-\delta_{1})}$ and $b_{\tau,\delta_{1}}(y)\coloneqq \frac{F(y)}{\tau-\delta_{1}}$,
	where in the last step we completed the square.

	\medskip

    \noindent
    \textbf{Step 2: Derivative bounds for $u_y$.}	
	Write
	\[
	u_y(x)=\exp(h_y(x)),\qquad
	h_y(x)=
	-\tfrac{\tau}{2}\|x\|_\Gamma^2
	+
	\innerprod{x}{F(y)}_\Gamma
	-
	\tfrac12\|F(y)\|_\Gamma^2
	-
	\Lambda(x).
	\]
    Using \eqref{eq:log_partition_growth_assumption} we obtain, for every
	multi-index $\alpha$ with $1 \leq |\alpha|\le \ell$,
	\begin{equation}
		\label{eq:hy_derivative_bound}
		|D^\alpha h_y(x)|
		\le
		C_\Lambda'
		\big(1+\|x\|_2+\|F(y)\|_2\big)^r.
	\end{equation}
    By the multivariate Fa\`a di Bruno formula, for every multi-index
	$\alpha$ with $|\alpha|\le\ell$,
	\[
	D^\alpha u_y(x)
	=
	u_y(x)\,
	B_\alpha\Big((D^\beta h_y(x))_{1\le|\beta|\le|\alpha|}\Big),
	\]
	where $B_\alpha$ is a (multivariate) polynomial whose monomials have the form
	\[
	\prod_{j=1}^s D^{\beta_j}h_y(x),
	\qquad
	s \leq |\alpha|,
	\quad
	|\beta_1|+\cdots+|\beta_s|=|\alpha|.
	\]
	Since each factor $D^{\beta_j}h_y(x)$ is bounded by 
	$C_\Lambda'(1+\|x\|_2+\|F(y)\|_2)^r$ by \eqref{eq:hy_derivative_bound}, 
	and each monomial contains at most $|\alpha|$ such factors, we obtain
	\begin{equation}
		\label{eq:uy_derivative_bound_general}
		\begin{split}
			|D^\alpha u_y(x)|
			&\leq
			C_\Lambda''
			\big(1+\|x\|_2+\|F(y)\|_2\big)^{r|\alpha|}
			\,u_y(x)
			\\
			&\leq
			C_{\Lambda,\delta_{1}}\,
			\big(1+\|x\|_2+\|F(y)\|_2\big)^{r|\alpha|}\,
			\exp\big(
			a_{\tau,\delta_{1}}
			\norm{F(y)}_\Gamma^2
			\big)
			\exp \big(
			-\tfrac{\tau-\delta_{1}}{2}
			\norm{
				x-b_{\tau,\delta_{1}}(y)
			}_\Gamma^2
			\big).
		\end{split}
	\end{equation}

	\medskip

    \noindent
    \textbf{Step 3: $H^{\ell}$-bound.}
	Using $\|z+b_{\tau,\delta_{1}}(y)\|_2\le \|z\|_2+\|b_{\tau,\delta_{1}}(y)\|_2$ and
	$\|b_{\tau,\delta_{1}}(y)\|_2
	\lesssim
	\|F(y)\|_2$,
	we obtain, for every $z \in \bR^{d}$ and $\delta_{2} > 0$,
	\begin{align*}
	\big(1+\|z+b_{\tau,\delta_{1}}(y)\|_2+\|F(y)\|_2\big)^{2 r |\alpha|}
	&\le
	C_{\Lambda,\tau,\delta_{1}}
	(1+\|z\|_2)^{2 r |\alpha|}
	(1+\|F(y)\|_2)^{2 r |\alpha|}
	\\
	&\leq
	C_{\Lambda,\tau,\delta_{1},\delta_{2}}
	(1+\|z\|_2)^{2 r |\alpha|}
	\exp \big(2\delta_{2} \norm{F(y)}_\Gamma^2\big),
	\end{align*}
	since polynomial factors in $\norm{F(y)}_\Gamma$ can be absorbed into
	$\exp(\delta_{2}\norm{F(y)}_\Gamma^2)$ up to a multiplicative constant (using equivalence of norms on $\bR^d$).
	Squaring and integrating \eqref{eq:uy_derivative_bound_general}, and changing variables to $z=x-b_{\tau,\delta_{1}}(y)$, yields
	\begin{align*}
		\|D^\alpha u_y\|_{L^2(\bR^d)}^2
		&\le
		C_{\Lambda,\tau,\delta_{1}}'
		\exp\!\big(2 a_{\tau,\delta_{1}}\norm{F(y)}_\Gamma^2\big)\, \times
		\\
		&\qquad\qquad\qquad\times\, 
		\int_{\bR^d}
		\big(1+\|z+b_{\tau,\delta_{1}}(y)\|_2+\|F(y)\|_2\big)^{2 r |\alpha|}
		\exp\!\big(
		-(\tau-\delta_{1})\norm{z}_\Gamma^2
		\big)\,\rd z
		\\
		&\leq
		C_{\Lambda,\tau,\delta_{1},\delta_{2}}'
		\exp\big(2 (a_{\tau,\delta_{1}} + \delta_{2})\norm{F(y)}_\Gamma^2\big)
		\int_{\bR^d}
		(1+\|z\|_2)^{2 r |\alpha|}
		\exp\big(
		-(\tau-\delta_{1})\norm{z}_\Gamma^2
		\big)\,\rd z
		\\
		&\leq
		C_{\Lambda,\tau,\delta_{1},\delta_{2}}''
		\exp\big(2 (a_{\tau,\delta_{1}} + \delta_{2})\norm{F(y)}_\Gamma^2\big).
	\end{align*}
	Taking the square root and summing over all $|\alpha|\le \ell$ gives
	\[
	\|u_y\|_{H^{\ell}(\bR^d)}
	\le
	C_{\ell,\tau,\delta_{1},\delta_{2}}'
	\exp\big((a_{\tau,\delta_{1}} + \delta_{2})\norm{F(y)}_\Gamma^2\big).	
    \]
    Since $u_y=m_\tau(\quark)\eta(\quark,y)$ and
	$\cH_{k_0} \simeq H^{\ell}(\bR^d)$, \Cref{lemma:reweighted_kernel}
	implies
	\[
	\eta(\quark,y)\in \cH_{k_{\cX}}
	\qquad\text{and}\qquad
	\norm{\eta(\quark,y)}_{\cH_{k_{\cX}}}
	=
	\norm{u_y}_{\cH_{k_0}}
	\leq
	C_{\ell,\tau,\delta_{1},\delta_{2}}
	\exp\big((a_{\tau,\delta_{1}} + \delta_{2})\norm{F(y)}_\Gamma^2\big).
	\]

    \medskip

    \noindent
    \textbf{Step 4: $\eta \in L^2(P_Y ; \cH_{k_{\cX}})$ under 
	\eqref{equ:conditions_on_tau}.}
	To prove $\eta \in L^2(P_Y ; \cH_{k_{\cX}})$, we verify the finiteness of 
	$\bE \big[ \norm{\eta(\quark ,Y)}_{\cH_{k_{\cX}}}^{2} \big]$
	using the pointwise bound established in Step~3.
	
    If $\tau > 1$, then we can choose 
	$\delta_{1},\delta_{2} > 0$ sufficiently small such that 
	$a_{\tau,\delta_{1}} + \delta_{2} < 0$, and thereby 
	$\bE \big[ \norm{\eta(\quark ,Y)}_{\cH_{k_{\cX}}}^{2} \big] 
	\leq C_{\ell,\tau,\delta_{1},\delta_{2}}^{2} < \infty$.

	If $\tau > \frac{1}{1+\lambda}$ for some $\lambda > 0$ with 
	$\bE\big[\exp \big( \lambda \norm{F(Y)}_\Gamma^2 \big) \big] < \infty$,
    then $\lambda > \frac{1-\tau}{\tau}$ and we can choose 
	$\delta_{1},\delta_{2} > 0$ sufficiently small such that 
	$2(a_{\tau,\delta_{1}} + \delta_{2}) \leq \lambda$, yielding
	\[
	\bE \big[ \norm{\eta(\quark ,Y)}_{\cH_{k_{\cX}}}^{2} \big]
	\leq
	C_{\ell,\tau,\delta_{1},\delta_{2}}^{2} \,
	\bE \big[ \exp\big(
	\lambda \norm{F(Y)}_{\Gamma}^{2}
	\big) \big]
	< \infty.
	\]
	In both cases, \Cref{thm:CEO_bounds_various_spaces} with $\nu_{\cY}=P_Y$ implies 
	that
    $K \in \HS(L^2(P_{Y}),\cH_{k_{\cX}})$ 
	with Hilbert--Schmidt norm
	$\norm{K}_{\HS(L^2(P_{Y}),\cH_{k_{\cX}})} = \norm{\eta}_{L^2(P_Y ; \cH_{k_{\cX}})}$.    
\end{proof}

\begin{lemma}[Cumulants bounded by centered moments]
	\label{lem:cumulant_moment_bound}
	Let \(Z\colon \Omega \to \mathbb R^d\) be a random vector such that
	\(\bE[\|Z\|_2^j]<\infty\) for some integer \(j\ge2\).
	For every multi-index \(\alpha\in \bN_0^d\) with \(|\alpha|=j\), let
	\[
	\kappa_\alpha(Z) \coloneqq D^\alpha K_Z(0),
	\qquad
	K_Z(t)\coloneqq \log \bE \big[e^{\innerprod{t}{Z}}\big],
	\]
	denote the corresponding mixed cumulant of order \(j\).
	Then there exists a constant \(c_\alpha>0\), depending only on \(\alpha\), such that
	$|\kappa_\alpha(Z)|
	\le
	c_\alpha\,
	\bE \big[\|Z-\bE Z\|_2^j\big]$.
\end{lemma}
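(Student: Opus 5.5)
Since cumulants of order $j\ge2$ are shift-invariant, I will first replace $Z$ by its centred version $W\coloneqq Z-\bE Z$. Indeed, $K_{Z}(t)=\innerprod{t}{\bE Z}+K_{W}(t)$, and the linear term vanishes after $j\ge2$ differentiations. Hence $\kappa_\alpha(Z)=\kappa_\alpha(W)$, and it suffices to prove $|\kappa_\alpha(W)|\le c_\alpha\,\bE[\|W\|_2^j]$ for centred $W$. To avoid assuming an exponential moment, I will read $D^\alpha K_Z(0)$ as the usual definition via the characteristic function. That is, $\log\bE[e^{i\innerprod{t}{W}}]$ is $j$ times differentiable near $0$ because $\bE\|W\|_2^j<\infty$, and the cumulant is obtained from the formal power-series relation between moments and cumulants. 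Whenever the moment generating function exists, the two definitions agree.

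The core step is the classical moment--cumulant formula. For $|\alpha|=j$, write $\alpha$ as a word $(i_1,\dots,i_j)$ of coordinate indices. Then
\[
\kappa_\alpha(W)=\sum_{\pi}(-1)^{|\pi|-1}(|\pi|-1)!\prod_{B\in\pi}\bE\Big[\prod_{k\in B}W_{i_k}\Big],
\]
where the sum ranges over set partitions $\pi$ of $\{1,\dots,j\}$. This follows from the multivariate Fa\`a di Bruno formula applied to $\log\circ M_W$, the same formula already used in the proofs above. Because $W$ is centred, every partition containing a singleton block contributes zero, so only partitions whose blocks all have size $\ge2$ survive.

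Each surviving term is a product of mixed moments $\bE[\prod_{k\in B}W_{i_k}]$ with $\sum_B|B|=j$. For each block I will use $|\prod_{k\in B}W_{i_k}|\le\|W\|_2^{|B|}$ and then Lyapunov/H\"older, giving $\bE[\|W\|_2^{|B|}]\le(\bE\|W\|_2^j)^{|B|/j}$. Multiplying over the blocks yields $\prod_B(\bE\|W\|_2^j)^{|B|/j}=\bE\|W\|_2^j$, since the exponents sum to $1$. Summing over partitions gives $c_\alpha=\sum_\pi(|\pi|-1)!$, which is bounded by a constant depending only on $j=|\alpha|$.

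The main obstacle is justifying the moment--cumulant formula under only $j$ finite moments. I would handle it via the characteristic function: $\phi_W\in C^j$ near $0$ with $\phi_W(0)=1$, so $\log\phi_W$ is $C^j$ near $0$. Fa\`a di Bruno then gives $D^\alpha\log\phi_W(0)$ in terms of $D^\beta\phi_W(0)=i^{|\beta|}\bE[W^\beta]$, and the powers of $i$ factor out uniformly as $i^j$. This yields exactly the partition formula for the real cumulant.
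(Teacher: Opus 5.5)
Your proposal is correct and follows essentially the same route as the paper. You reduce to the centred vector via shift-invariance of cumulants of order $j\ge2$, expand $\kappa_\alpha$ as a finite sum over set partitions of products of block moments, and bound each product by $\bE[\|W\|_2^j]$ via Jensen/Lyapunov, with block exponents $|B|/j$ summing to one. Your explicit M\"obius coefficients, and your passage to the characteristic function so that $D^\alpha K_Z(0)$ makes sense under only $j$ finite moments, are useful refinements of points the paper leaves implicit: it cites McCullagh for the moment--cumulant formula and tacitly assumes the moment generating function exists.
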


\begin{proof}
	Cumulants can be written as polynomials in the moments of \(Z\);
	see \cite[Section~2.3.4]{McCullagh1987}.  
	Applying this identity to the centered vector
	\(\widetilde Z = Z-\bE Z\), and using that cumulants of order \(j\ge2\)
	are invariant under centering (see the affine transformation formula
	in \cite[Section~2.4]{McCullagh1987}), we obtain
	\[
	\kappa_\alpha(Z)
	=
	\sum_{\pi}
	c_{\alpha,\pi}
	\prod_{B\in\pi}
	\bE[\widetilde Z^{\beta_B}],
	\]
	where the sum contains finitely many terms and
	\(\sum_{B\in\pi}|\beta_B|=j\).
	By Jensen's inequality,
	\[
	\left|
	\prod_{B\in\pi}\bE \big[\widetilde Z^{\beta_B} \big]
	\right|	
	\le
	\prod_{B\in\pi}\bE \Big[\|\widetilde Z\|_2^{|\beta_B|}\Big]
	\le
	\prod_{B\in\pi}\bE \Big[\|\widetilde Z\|_2^{j}\Big]^{|\beta_B|/j}
	=
	\bE\Big[\|\widetilde Z\|_2^{j}\Big]^{\big(j^{-1} \sum_{B\in\pi}|\beta_B|\big)}
	=
	\bE\Big[\|\widetilde Z\|_2^{j}\Big].
	\]
	Summing over the finitely many terms yields
	\[
	|\kappa_\alpha(Z)|
	\le
	c_\alpha
	\bE[\|Z-\bE Z\|_2^j]
	\]
	for some constant \(c_\alpha>0\) depending only on \(\alpha\).
\end{proof}

\begin{proof}[Proof of \Cref{cor:log_partition_sufficient_conditions}]
	We verify that each of the conditions \ref{item:logpart_bounded},\ref{item:logpart_affine_gaussian},\ref{item:logpart_tilted_moments}, \ref{item:logpart_logconcave}
	implies \eqref{eq:log_partition_growth_assumption}.

    \medskip

    \noindent
    \textbf{\ref{item:logpart_affine_gaussian} $\Rightarrow$ \eqref{eq:log_partition_growth_assumption}.}
	If \(Y\) is Gaussian and \(F(y)=Ay+b\), then
	$X=F(Y)+\varepsilon$
	is Gaussian on \(\bR^d\) with some mean $m_{X}$ and covariance matrix $\Sigma_{X}$.
	Hence its density satisfies
	\[
	p_X(x)
	=
	c_1\, \exp\big(-\tfrac12\|x\|_\Gamma^2\big) \, \Xi(x)
	=
	c_2\, \exp\big(-\tfrac12\norm{x-m_X}_{\Sigma_X}^{2}\big),
	\]
	for certain constants $c_{1},c_{2} > 0$, where we used the definition of $\Xi$. Taking logarithms gives
	\[
	\Lambda(x)=\tfrac12\|x\|_\Gamma^2 - \tfrac12\norm{x-m_X}_{\Sigma_X}^{2} + \log \tfrac{c_2}{c_{1}}.
	\]
	Thus \(\Lambda\) is a quadratic polynomial, and therefore
	satisfies \eqref{eq:log_partition_growth_assumption}.
    
	\medskip

    \noindent
    \textbf{\ref{item:logpart_tilted_moments} $\Rightarrow$ \eqref{eq:log_partition_growth_assumption}.}
	Let \(\Lambda(x)=\log\Xi(x)\). Since \(\Xi\in C^\ell(\bR^d)\), the derivatives
	of \(\Lambda\) up to order \(\ell\) are well-defined.
	By differentiating the log-partition function,
	\[
	\norm{\nabla \Lambda(x)}_{2}
	=
	\norm{\bE[a(Y) \, | \,  X=x]}_{2}
	=
	\norm{m_1^a(x)}_{2}
	\le
	c\, (1+\|x\|_2).
	\]	
	Now let \(\alpha\in\bN_0^d\) with \( 2 \leq |\alpha|=j\leq \ell\). By definition of cumulants,
	\(D^\alpha\Lambda(x)\) is the mixed cumulant of \(a(Y)\) under
	the conditional law \(P_{Y \, | \,  X=x}\). Hence, by
	\Cref{lem:cumulant_moment_bound} applied conditionally,
	\[
	|D^\alpha\Lambda(x)|
	\le
	c_\alpha\,
	\bE\left[
	\|a(Y)-m_1^a(x)\|_2^j
	\, | \, X=x
	\right]
	=
	c_\alpha\, m_j^a(x)
	\le
	c_\alpha \, c \, (1+\|x\|_2).
	\]
	This shows \eqref{eq:log_partition_growth_assumption}.

    \medskip

    \noindent
    \textbf{\ref{item:logpart_bounded} $\Rightarrow$ \ref{item:logpart_tilted_moments}.}
    If \(F\) is \(P_Y\)-essentially bounded, then \(a(Y)=\Gamma^{-1}F(Y)\) is also
    \(P_Y\)-essentially bounded. Hence there exists \(R>0\) such that
    $ \|a(Y)\|_2\le R$ $P_Y$-a.s.
    Since \(P_{Y \, | \,  X=x}\ll P_Y\) for every \(x\in\bR^d\), the same bound holds
    \(P_{Y \, | \,  X=x}\)-almost surely. Therefore, for every \(x\in\bR^d\),
    \[
    \|m_1^a(x)\|_2
    =
    \|\bE[a(Y) \, | \,  X=x]\|_2
    \le
    R,
    \]
    and, for \(j=2,\dots,\ell\),
    \[
    m_j^a(x)
    =
    \bE\big[\|a(Y)-m_1^a(x)\|_2^j  \, | \,  X=x\big]
    \le
    (2R)^j.
    \]
    Thus, the bounds in \ref{item:logpart_tilted_moments} hold.		
	
	\medskip

    \noindent
    \textbf{\ref{item:logpart_logconcave} $\Rightarrow$ \ref{item:logpart_tilted_moments}.}
	Let \(\rho\) denote the Lebesgue density of the pushforward law of \(a(Y)=\Gamma^{-1}F(Y)\).
	By assumption, \(\rho\) is strongly log-concave, so we may write
	\[
	\rho(z)\propto e^{-V(z)}
	\qquad
	\text{with}
    \qquad
	\nabla^2 V(z)\succeq cI		
	\]
	for some \(c>0\).	
	For each \(x\in\bR^d\), let \(P_x^a\) denote the law of \(a(Y)\) under the conditional
	distribution \(P_{Y \, | \,  X=x}\).
	By \eqref{equ:Radon_Nikodym_BIP_exponential_tilting},
	the law \(P_x^a\) has density
	\[
	\rho_x(z)
	\propto
	\exp\!\Big(\innerprod{x}{z}-\tfrac12 z^\top \Gamma z\Big)\rho(z)
	=
	\exp\!\Big(
	-\Big[V(z)-\innerprod{x}{z}+\tfrac12 z^\top\Gamma z\Big]
	\Big).
	\]
	Hence \(P_x^a\) is strongly log-concave with a lower Hessian bound that is
	uniform in \(x\):
	\[
	\nabla^2\!\Big(V(z)-\innerprod{x}{z}+\tfrac12 z^\top\Gamma z\Big)
	\succeq
	\lambda_\ast\, I,
	\qquad
	\lambda_\ast \coloneqq c+\lambda_{\min}(\Gamma) >0.
	\]
	Let \(G,G'\sim\mathcal N(0,\lambda_\ast^{-1}I)\) be independent.
	By Caffarelli's contraction theorem \citep[Theorem~9.7]{SaumardWellner2014},
	for every \(x\in\bR^d\), there exists a contraction
	\(T_x:\bR^d\to\bR^d\) such that \(T_x(G)\sim P_x^a\).
	Then, for \(j=2,\dots,\ell\),
	\[
	m_j^a(x)
	=
	\bE\big[\| T_x(G) - \bE[T_x(G)]\|_2^j\big]
	\le
	\bE\big[\|T_x(G) - T_x(G')\|_2^j\big]
	\le
	\bE\big[\|G-G'\|_2^j\big]
	\eqqcolon C_j ,
	\]
	Thus the conditional centered moments are bounded uniformly in \(x\).
	Further, since \(V\) is strongly convex (\(\nabla^2V\succeq cI\)), the density \( \rho(z)\propto e^{-V(z)}\) has Gaussian tails.
	Hence the integrand in the definition of \(\Xi\) is dominated by an
	integrable function uniformly on compact sets in \(x\).
	Differentiation under the integral sign is therefore justified to arbitrary
	order, yielding \(\Xi\in C^\infty(\bR^d)\) and thus \(\Lambda\in C^\infty(\bR^d)\).
	Since 
	\[
	m_1^a(x)
	=
	\bE[a(Y) \, | \, X=x]
	=
	\nabla\Lambda(x),
	\quad
	\text{and}
	\quad
	\norm{D^2\Lambda(x)}
	=	
	\norm{\Cov[a(Y) \, | \, X=x]}
	\leq
	m_2^a(x)
	\leq
	C_{2}
	\]	
	uniformly in \(x\),
	\[
	\|m_1^a(x)\|_2
	\le
	\Norm{\nabla\Lambda(0) + \int_0^1 D^2\Lambda(tx)\,x\, \rd t}_2
	\le
	\|\nabla\Lambda(0)\|_2 + C_{2}\|x\|_2
	\le
	C_1(1+\|x\|_2)
	\]
	for some $C_{1} > 0$ and all $x \in \bR^{d}$.
	Hence the bounds in \ref{item:logpart_tilted_moments} hold with $C \coloneqq \max_{j=1,\dots,\ell} C_{j}$.
\end{proof}

\begin{proof}[Proof of \Cref{cor:unbounded_F_weighted_sobolev_CME}]
We first verify that $(X,k_\cX)$ satisfies \Cref{ass:kernel_rv_pair}.

\ref{item:basic_assumptions_kernel}: Since $k_0$ is continuous (\Cref{thm:unbounded_F_weighted_sobolev}) and $m_\tau(x) = \exp(-\tfrac{\tau}{2}\norm{x}_\Gamma^2) > 0$ is continuous, $k_\tau$ is continuous, hence measurable. Symmetry and positive definiteness of $k_\tau$ follow from \Cref{lemma:reweighted_kernel}, which also shows that the map $f \mapsto m_\tau f$ is an isometry from $\cH_{k_\tau}$ onto $\cH_{k_0} \simeq H^\ell(\bR^d)$; since $H^\ell(\bR^d)$ is separable, so is $\cH_{k_\tau}$.

\ref{item:basic_assumptions_feature_map_L2}: Since $\tau < \frac{\lambda}{1+2\lambda}$, $\zeta \coloneqq 2\lambda$ satisfies
\[
\tau (1+\zeta) < \lambda
\quad
\text{and}
\quad
s
\coloneqq
\tau (1+\zeta^{-1})
<
\tfrac{1}{2}.
\]
It follows from \eqref{equ:technical_inequality_splitting_1} that
\[
\tau \norm{X}_{\Gamma}^{2}
\leq
\tau \big( \norm{F(Y)}_{\Gamma}+\norm{\varepsilon}_{\Gamma} \big)^{2}
\leq
\tau (1+\zeta)\norm{F(Y)}_{\Gamma}^{2}
+
\tau (1 + \zeta^{-1})\norm{\varepsilon}_{\Gamma}^{2}
\leq
\lambda\norm{F(Y)}_{\Gamma}^{2}
+
s\norm{\varepsilon}_{\Gamma}^{2}.
\]
Since $Y$ and $\varepsilon$ are independent and $c_{0} = k_{0}(x,x)$ is constant by translation-invariance of $k_{0}$,
\begin{align*}
    \bE \big[ k_{\cX}(X,X) \big]	
    &=
    \bE \big[ m_\tau(X)^{-2}\, k_{0}(X,X) \big]
    =
    c_0\, \bE \big[ \exp\big( \tau \norm{X}_{\Gamma}^{2} \big) \big]
    \\
    &\leq
    c_0\,
    \bE \big[ \exp\big( \lambda \norm{F(Y)}_{\Gamma}^{2} \big) \big]\,
    \bE \big[ \exp\big( s \norm{\varepsilon}_{\Gamma}^{2} \big) \big].
\end{align*}
The first expectation is finite by assumption, while the second one is finite since $s < 1/2$ and $\norm{\varepsilon}_{\Gamma}^{2} \sim \chi^{2}_{d}$ with moment generating function $\bE \big[ \exp(s \norm{\varepsilon}_{\Gamma}^{2}) \big] = (1-2s)^{-d/2}$. Hence $\bE[k_\cX(X,X)] < \infty$.

\ref{item:basic_assumptions_ae_separation}: Since $X = F(Y) + \varepsilon$ with $\varepsilon \sim \mathcal{N}(0,\Gamma)$ and $\Gamma$ strictly positive definite, the conditional distribution $P_{X\mid Y=y} = \mathcal{N}(F(y),\Gamma)$ has a strictly positive density on $\bR^d$ for every $y$. Consequently, for every non-empty open set $U \subset \bR^d$, $P_X(U) = \int P_{X\mid Y=y}(U)\,P_Y(\rd y) > 0$, so $\operatorname{supp}(P_X) = \bR^d = \cX$. Since $k_\cX = k_\tau$ is continuous (by \ref{item:basic_assumptions_kernel}), this proves injectivity of the canonical embedding $\cH_{k_\cX} \hookrightarrow L^2(P_X)$.

Hence $(X,k_\cX)$ satisfies \Cref{ass:kernel_rv_pair}. Consequently, if \eqref{equ:balancing_tau_and_lambda} holds and $(Y,k_{\cY})$ also satisfies \Cref{ass:kernel_rv_pair} (implying $\cH_{k_{\cY}} \hookrightarrow L^2(P_{Y})$), then \Cref{thm:unbounded_F_weighted_sobolev,cor:regularity_conditional_expectation_operator_CME} with $\nu=P_Y$ prove the remaining claims.
\end{proof}

\subsection{\texorpdfstring{Proofs of \Cref{sec:koopman_sde_class}}{Proofs of Section 4.3}}\label{sec:proofs_sde}

Throughout the proofs of this section, $C$ and $c$ denote positive constants that depend only on the quantities indicated in the subscript.

\begin{proof}[Proof of \Cref{prop:sde_density_regularity_bounds}]
Under \Cref{ass:sde_unif_elliptic_regular}, $\|\nabla b\|_\infty < \infty$ and $\|\nabla \sigma\|_\infty < \infty$, so by the mean value theorem the coefficients $b$ and $\sigma$ are globally Lipschitz continuous. By \citep[Theorem 5.2.1]{Oksendal2003SDE}, the SDE \eqref{eq:basic_sde_time_homogeneous} admits a unique strong solution.
Let $(K_t)_{t \ge 0}$ denote the associated Markov (Koopman) semigroup $K_t f(x) \coloneqq \mathbb{E}[f(X_t)\, | \, X_0=x]$. Its infinitesimal generator $A$, defined on $C_0^2(\mathbb{R}^d)$, is given by \citep[Theorem 7.3.3]{Oksendal2003SDE}
\begin{align*}
(Ag)(x) = \lim_{t \downarrow 0} \frac{K_t g(x) - g(x)}{t} = \sum_{i=1}^d b_i(x) \partial_{x_i} g(x) + \frac{1}{2} \sum_{i,j=1}^d (\sigma(x) \sigma(x)^\top)_{ij} \partial_{x_i} \partial_{x_j} g(x), \; x \in \bR^d.
\end{align*}
Under \Cref{ass:sde_unif_elliptic_regular}, $A$ is a uniformly elliptic second-order differential operator with bounded coefficients and bounded derivatives up to order $\ell$. 

Classical parabolic PDE theory \citep[e.g.][]{Frie1964} now implies that $A$ admits a fundamental solution $p(t,x,y)$ which satisfies $K_t f(x) = \int_{\bR^d} f(y)\, p(t,x,y)\, \rd y$ and coincides with the transition density of the diffusion process, i.e., $ \eta_t(x,y) = p(t,x,y)$. 
\citet[Chapter 9, Theorem 7]{Frie1964} then states that for every $t>0$, $p(t,\quark,y)$ admits weak derivatives $\partial_x^\alpha p(t,x,y)$ up to order $|\alpha| \le \ell$, satisfying the Gaussian bounds
\[
|\partial_x^\alpha p(t,x,y)|
\le C_\alpha\, t^{-(|\alpha|+d)/2}\,
\exp \left(-\kappa \, \frac{\|x-y\|_2^2}{t}\right),
\]
with constants $\kappa , C_\alpha > 0$. 
\end{proof}

\begin{proof}[Proof of \Cref{thm:sde_density_koopman_unified}]
    We first verify that the representation \eqref{equ:Koopman_definition_and_integral_form} holds with density $\eta_t$, before treating the two cases separately.

    \medskip

    \noindent
    \textbf{Step 1: Koopman representation and derivative estimates.}
    By \Cref{prop:sde_density_regularity_bounds}, the transition density $\eta_t(x,y)$ exists and \eqref{equ:Koopman_definition_and_integral_form} holds, i.e., $(K_tg)(x) = \bE[g(X_t) \, | \,  X_0=x] = \int_{\bR^d} g(y)\eta_t(x,y)\,\rd y$. Moreover, \Cref{prop:sde_density_regularity_bounds}
    shows that, for every multi-index $|\alpha|\le \ell$, there exists $\kappa>0$ such that $|\partial_x^\alpha \eta_t(x,y)| \le C_{\alpha,t} \exp\left( -\kappa(\|x-y\|_2^2)/t \right)$. Since these derivatives are integrable in $y$, differentiation under the integral sign is justified, yielding $\partial_x^\alpha K_tg(x) = \int_{\bR^d} g(y)\, \partial_x^\alpha\eta_t(x,y)\,\rd y$ for every $|\alpha|\le\ell$. Thus, it remains to verify the required regularity of
    $\eta_t(\quark,y)$.

    \medskip

    \noindent
    \textbf{Step 2: Case \ref{item:thm_sde_koopman_bounded}.}
    Assume that $\cX\subset\bR^d$ is Lipschitz bounded. Using the Sobolev norm representation, Fubini--Tonelli, and the derivative estimates above, we obtain
    \begin{align*}
        \|\eta_t\|_{L^2(\nu_\cY;H^\ell(\cX))}^2 
        = \sum_{|\alpha|\le\ell} \int_{\cX} \int_{\bR^d} |\partial_x^\alpha\eta_t(x,y)|^2 \,\rd y\,\rd x \leq \sum_{|\alpha|\le\ell} C_{\alpha,t} \int_{\cX} \int_{\bR^d} \exp\left( -\frac{2\kappa}{t}\|x-y\|_2^2 \right) \rd y\,\rd x.
    \end{align*}
    For the inner integral we easily see that $ \int_{\bR^d} \exp\left(-\frac{2\kappa}{t}\|x-y\|_2^2 \right) \rd y = C_{\kappa,t}$, independently of $x$. By boundedness of $\cX$, we obtain
    $\|\eta_t\|_{L^2(\nu_\cY;H^\ell(\cX))}^2 \leq C_{\ell,t} \bslambda(\cX) < \infty$, 
    which proves that $\eta_t \in L^2(\nu_\cY; H^\ell(\cX))$. By \Cref{thm:CEO_bounds_various_spaces}\ref{item:CEO_H_HS}, $K_t \in \HS(L^2(\nu_{\cY}), H^\ell(\cX))$ with $\|K_t\|_{\HS(L^2(\nu_{\cY}), H^\ell(\cX))} = \|\eta_t\|_{L^2(\nu_\cY;H^\ell(\cX))}$ follows.

Further, setting $\nu_\cX = \bslambda$, we immediately obtain $K_{t} \in \HS( L^2(\nu_{\cY}) , L^2(\nu_{\cX}))$ by \Cref{thm:CEO_bounds_various_spaces}\ref{item:CEO_L2_HS}, and $K_t \in \Tr( L^2(\nu_{\cY}) , L^2(\nu_\cX))$ for $\ell > d/2$ by \Cref{thm:CEO_bounds_various_spaces}\ref{item:CEO_L2_Tr}. 

\medskip

\noindent
\textbf{Step 3: Case \ref{item:thm_sde_koopman_unbounded}.}
Assume now that $\cX=\bR^d$. By
\Cref{lemma:reweighted_kernel} and the norm equivalence between $\cH_{k_0}$ and $H^\ell(\bR^d)$, we have $\|f\|_{\cH_{k_\tau}} \simeq \|m_\tau f\|_{H^\ell(\bR^d)}$, where $m_\tau(x) = \exp(-\frac{\tau}{2}\|x\|_\Gamma^2)$. It therefore suffices to estimate $\|m_\tau  \eta_t(\quark,y)\|_{H^\ell(\bR^d)}$.

By the product rule, every derivative $\partial_{x}^{\alpha} (m_\tau\eta_t(\quark,y))$ is a finite sum of terms $(\partial^\beta m_\tau) (\partial_x^{\alpha-\beta}\eta_t)$ with $|\beta| \leq |\alpha| \leq \ell$. 
Since every derivative of $m_\tau$ is a polynomial multiplied by the same Gaussian weight, there exist constants $C_{\alpha,\beta}$ such that $|\partial^\beta m_\tau(x)| \le C_{\alpha,\beta} (1+\|x\|_2)^{\ell}\exp\left(-\frac{\tau}{2}\|x\|_\Gamma^2\right)$.

Further, since $\Gamma$ is symmetric and positive definite, there exist constants $0 < \lambda_\mathrm{min} \leq \lambda_\mathrm{max}$ such that $\lambda_\mathrm{min} \|x\|_2^2 \leq \|x\|_\Gamma^2 \leq \lambda_\mathrm{max} \|x\|_2^2$, hence $e^{-\frac{\tau}{2} \|x\|_\Gamma^2} \leq e^{-\frac{\tau}{2} \lambda_\mathrm{min} \|x\|_2^2}$. Combining this with the Gaussian estimate for $\partial^{\alpha-\beta}\eta_t$ yields
\begin{align*}
    |\partial^\alpha(m_\tau\eta_t)(x,y)|^2 \le C_{\alpha,t,\tau}\, (1+\|x\|_2)^N \exp\left(- \tau \lambda_{\mathrm{min}} \|x\|_2^2\right) \, \exp\left(-\frac{2\kappa}{t}\|x-y\|_2^2 \right)
\end{align*}
for some integer $N$. Completing the square in the exponent gives
\begin{align*}
    -\tau\|x\|_\Gamma^2 - \frac{2\kappa}{t}\|x-y\|_2^2 = -(A+B) \left\| x-\frac{B}{A+B}y \right\|^2 - c_{\tau,\kappa,t} \|y\|_2^2,
\end{align*}
with positive constants $A=\tau \lambda_\mathrm{min}$, $B=2\kappa/t$, and $c_{\tau,\kappa,t}>0$.
Consequently, after integrating over $x$ for fixed $y \in \bR^d$, we obtain $\|m_\tau \eta_t(\quark,y) \|_{H^\ell(\bR^d)}^2 \le C_{\ell,t,\tau}\, \exp\left(-c_{\tau,\kappa,t}\|y\|_2^2 \right)$, where we used the finiteness of Gaussian moments for any $N \in \bN$. The expression on the right-hand side is clearly integrable over $\cY = \bR^d$, hence $\eta_t \in L^2(\nu_\cY;\cH_{k_\tau})$ follows. This means that \Cref{assump:general_Radon_Nikodym_setting} holds, such that \Cref{thm:CEO_bounds_various_spaces}\ref{item:CEO_H_HS} implies $K_t \in \HS(L^2(\nu_\cY), \cH_{k_\tau})$.
\end{proof}

\begin{proof}[Proof of \Cref{cor:sde_koopman_one_sided_error_rate}]
    Under \Cref{ass:sde_unif_elliptic_regular}, \Cref{thm:sde_density_koopman_unified}\ref{item:thm_sde_koopman_bounded} yields $\eta_t \in L^2(\nu_\cY; H^\ell(\cX))$ with $\nu_\cY = \bslambda$. Thus \Cref{assump:general_Radon_Nikodym_setting} holds with $\cH = H^\ell(\cX)$.     
    By \Cref{prop:matern_wendland_sobolev}, the Mat\'ern kernel with parameter $\beta = \ell - d/2 > 0$ induces the RKHS $\cH_{k_\beta^\mathrm{Mat}}(\cX) \simeq H^\ell(\cX)$. \Cref{thm:err_bounds_proj_unified}\ref{item:thm_err_bounds_proj_unified_RBF} then implies \eqref{eq:sde_one_sided_proj_fd_bound} with $m = \lceil \beta \rceil - 1 = \lceil \ell - d/2 \rceil - 1$. 
    
    Further, by \Cref{prop:matern_wendland_sobolev} the Wendland kernel $k_{d,j}^\mathrm{Wen}$ with odd $d$ and parameter $j = \ell - (d+1)/2 \in \bN_0$ induces the RKHS $\cH_{k_{d,j}^{\mathrm{Wen}}} \simeq H^\ell(\cX)$. Analogously, \eqref{eq:sde_one_sided_proj_fd_bound} then follows from \Cref{thm:err_bounds_proj_unified}\ref{item:thm_err_bounds_proj_unified_wendland} with $m = j + 1/2 = \ell - d/2$. 
\end{proof}

\end{document}

%% file: Packages_arxiv.tex
\usepackage{amsfonts,amssymb,amsmath,amsthm,amstext,amssymb,mathtools}
\usepackage{subcaption,float,color,graphicx,enumitem}
\usepackage{dsfont}  
\usepackage[normalem]{ulem} 
\usepackage[authoryear,sort,round]{natbib}  
\usepackage{fullpage} 
\usepackage{hyperref,nicefrac}
\usepackage{algorithm}
\usepackage{algpseudocode}
\usepackage{tabularx,booktabs,multirow, makecell, array}

\usepackage[noabbrev,capitalise,nosort,nameinlink]{cleveref}

\usepackage{aliascnt}

\usepackage[table]{xcolor}

\usepackage{empheq} 

\usepackage{adjustbox}
\usepackage{tcolorbox}
\usepackage{tikz}
\usetikzlibrary{bayesnet,er,trees,shapes.symbols,mindmap,arrows,arrows.meta,decorations,shapes.misc,shapes.arrows,chains,matrix,positioning,scopes,decorations.pathmorphing,patterns, calc, arrows.meta, decorations.pathreplacing}
\usepackage{tikz-cd} 


%% file: Macros_arxiv.tex
\newcommand{\HS}{\mathsf{HS}}
\newcommand{\Tr}{\mathsf{Tr}}

\newcommand{\fd}{\mathsf{h}}

\newcommand*{\bR}{\mathbb{R}}

\newcommand*{\bN}{\mathbb{N}}

\newcommand*{\bE}{\mathbb{E}}

\newcommand*{\bP}{\mathbb{P}}

\newcommand*{\cN}{\mathcal{N}}
\newcommand*{\cE}{\mathcal{E}}
\newcommand*{\cF}{\mathcal{F}}
\newcommand*{\cH}{\mathcal{H}}
\newcommand*{\cD}{\mathcal{D}}
\newcommand*{\cG}{\mathcal{G}}
\newcommand*{\cP}{\mathcal{P}}
\newcommand*{\cX}{\mathcal{X}}
\newcommand*{\cY}{\mathcal{Y}}

\newcommand*{\Cov}{\mathsf{C}}

\newcommand*{\cL}{\mathcal{L}}

\newcommand*{\cS}{\mathcal{S}}
\newcommand*{\cT}{\mathcal{T}}

\makeatletter
\newcommand*\quark{\mathpalette\quark@{.5}}
\newcommand*\quark@[2]{\mathbin{\vcenter{\hbox{\scalebox{#2}{$\; \m@th#1\bullet \;$}}}}}
\makeatother

\makeatletter
\newcommand{\mylabel}[2]{#2\def\@currentlabel{#2}\label{#1}}
\makeatother

\newcommand*{\rd}{\mathrm{d}}

\DeclarePairedDelimiterX{\infdivx}[2]{(}{)}{#1\;\delimsize\|\;#2}

\DeclareMathOperator{\Id}{Id}

\definecolor{darkgreen}{rgb}{0,0.4,0}

\newcommand*{\todored}[1]{\bgroup\color{red}TODO:~#1\egroup}
\newcommand*{\mh}[1]{\bgroup\color{orange}MH:~#1\egroup}
\newcommand*{\ik}[1]{\bgroup\color{cyan}IK:~#1\egroup}
\newcommand*{\ms}[1]{\bgroup\color{violet}MS:~#1\egroup}
\newcommand*{\kw}[1]{\bgroup\color{olive}KW:~#1\egroup}

\DeclarePairedDelimiter\abs{\lvert}{\rvert}%
\DeclarePairedDelimiter\mynorm{\lVert}{\rVert}%
\makeatletter
\let\oldabs\abs
\def\abs{\@ifstar{\oldabs}{\oldabs*}}
\let\oldnorm\mynorm
\def\mynorm{\@ifstar{\oldnorm}{\oldnorm*}}
\makeatother

\theoremstyle{plain}
\newtheorem{theorem}{\sffamily Theorem}[section]
\newaliascnt{proposition}{theorem}
\newtheorem{proposition}[proposition]{\sffamily Proposition}
\aliascntresetthe{proposition}
\newaliascnt{lemma}{theorem}
\newtheorem{lemma}[lemma]{\sffamily Lemma}
\aliascntresetthe{lemma}
\newaliascnt{corollary}{theorem}
\newtheorem{corollary}[corollary]{\sffamily Corollary}
\aliascntresetthe{corollary}
\theoremstyle{definition}

\newaliascnt{notation}{theorem}
\newtheorem{notation}[notation]{\sffamily Notation}
\aliascntresetthe{notation}
\newaliascnt{example}{theorem}
\newtheorem{example}[example]{\sffamily Example}
\aliascntresetthe{example}

\newaliascnt{remark}{theorem}
\newtheorem{remark}[remark]{\sffamily Remark}
\aliascntresetthe{remark}

\newaliascnt{assumption}{theorem}
\newtheorem{assumption}[assumption]{\sffamily Assumption}
\aliascntresetthe{assumption}

\newcommand{\absval}[1]{\lvert #1 \rvert}
\newcommand{\innerprod}[2]{\langle #1 , #2 \rangle}
\newcommand{\norm}[1]{\lVert #1 \rVert}

\newcommand{\Innerprod}[2]{\left\langle #1 , #2 \right\rangle}
\newcommand{\Norm}[1]{\left\Vert #1 \right\Vert}

\numberwithin{equation}{section}
\numberwithin{figure}{section}
\numberwithin{table}{section}

\usepackage{cleveref}
\Crefname{theorem}{Theorem}{Theorems}
\Crefname{proposition}{Proposition}{Propositions}
\crefname{corollary}{Corollary}{Corollaries}
\Crefname{definition}{Definition}{Definitions}
\crefname{example}{Example}{Examples}
\Crefname{condition}{Condition}{Conditions}
\Crefname{remark}{Remark}{Remarks}

\crefname{assumption}{Assumption}{Assumptions}
\Crefname{assumption}{Assumption}{Assumptions}
\crefalias{assumption}{assumption}

\usepackage{todonotes}

\newcommand{\bsK}{{\boldsymbol{K}}}

\newcommand{\bsX}{{\boldsymbol{X}}}
\newcommand{\bsY}{{\boldsymbol{Y}}}

\newcommand{\bsbeta}{{\boldsymbol{\beta}}}

\newcommand{\bslambda}{{\boldsymbol{\lambda}}}

